\newtheoremstyle{cited}%
  {3pt}
  {3pt}
  {\itshape}
  {}
  {\bfseries}
  {.}
  {.5em}
  {\thmname{#1} \thmnumber{#2} \thmnote{\normalfont#3}}
\newtheoremstyle{cited2}%
  {3pt}
  {3pt}
  {\upshape}
  {}
  {\bfseries}
  {.}
  {.5em}
  {\thmname{#1} \thmnumber{#2} \thmnote{\normalfont#3}}
\newtheorem{thm}{Theorem}[section]
\newtheorem{lemma}[thm]{Lemma}
\newtheorem{prop}[thm]{Proposition}
\newtheorem{corollary}[thm]{Corollary}
\theoremstyle{definition}
\newtheorem{question}[thm]{Question}
\newtheorem{eg}[thm]{Example}
\newtheorem{defn}[thm]{Definition}
\theoremstyle{remark}
\newtheorem{remark}[thm]{Remark}
\newtheorem{notation}[thm]{Notation}
\theoremstyle{cited}
\newtheorem{citedlemma}[thm]{Lemma}
\theoremstyle{cited2}
\newtheorem{citeddefn}[thm]{Definition}
\theoremstyle{empty}
\newtheorem{duplicate}{Theorem}
\newcommand{\sop}{system of parameters}
\newcommand{\CM}{Cohen-Macaulay}
\newcommand{\fg}{finitely-generated}
\newcommand{\ph}{phantom}
\newcommand{\phex}{\ph\  extension}
\newcommand{\stwoif}{$\text{S}_2$-ification}
\newcommand{\cld}{complete local domain}
\newcommand{\charp}{characteristic $p>0$}
\newcommand{\cmm}{containment module modification}
\newcommand{\pmm}{parameter module modification}
\newcommand{\axioma}{Functoriality Axiom}
\newcommand{\axiomb}{Semi-residuality Axiom}
\newcommand{\axiomc}{Faithfulness Axiom}
\newcommand{\axiomd}{Generalized Colon-Capturing Axiom}
\newcommand{\symm}{\text{Sym}}
\newcommand{\im}{\text{im}}
\newcommand{\Hom}{\text{Hom}}
\newcommand{\Ext}{\text{Ext}}
\newcommand{\epf}{\text{epf}}
\newcommand{\id}{\text{id}}
\newcommand{{\cl}}{\text{cl}}
\newcommand{\ld}{\text{D}}
\newcommand{{\tr}}{\text{tr}}
\newcommand{\fracR}{\text{Frac}}
\newcommand{\Frac}{\text{Frac}}
\newcommand{\syz}{\text{syz}}
\newcommand{\Ann}{\text{Ann}}
\newcommand{\reg}{\text{reg}}
\newcommand{\ord}{\text{ord}}
\newcommand{\depth}{\text{depth}}
\newcommand{\Q}{\mathbb{Q}}
\newcommand{\Z}{\mathbb{Z}}
\newcommand{\N}{\mathbb{N}}
\title{Closure Operations that Induce Big Cohen-Macaulay Modules and Classification of Singularities}
\author[um]{Rebecca~R.G.\corref{cor1}\fnref{fn1}}
\ead{rirg@umich.edu}
\address[um]{Department of Mathematics, University of Michigan, 2074 East Hall, 530 Church Street, Ann Arbor, MI 48109}
\begin{document}

\begin{abstract}
Geoffrey Dietz introduced a set of axioms for a closure operation on a complete local domain $R$ so that the existence of such a closure operation is equivalent to the existence of a big \CM\ module. These closure operations are called Dietz closures. In complete rings of characteristic $p>0$, tight closure and plus closure satisfy the axioms.

We define module closures and discuss their properties. For many of these properties, there is a smallest closure operation satisfying the property. In particular, we discuss properties of big \CM\ module closures, and prove that every Dietz closure is contained in a big \CM\ module closure. Using this result, we show that under mild conditions, a ring $R$ is regular if and only if all Dietz closures on $R$ are trivial. Finally, we show that solid closure in equal characteristic 0, integral closure, and regular closure are not Dietz closures, and that all Dietz closures are contained in liftable integral closure.
\end{abstract}

\begin{keyword}
\MSC[2010] 13D22 \sep \MSC[2010] 13C14 \sep \MSC[2010] 13A35
\end{keyword}

\maketitle

\section{Introduction}

The question of the existence of big \CM\ modules has motivated many results in commutative algebra. While they are known to exist over rings of equal characteristic \cite{modulemods} and rings of mixed characteristic and dimension at most 3 \cite{heitmann, dim3bigcmalgebras}, it is not known whether they exist over mixed characteristic rings of higher dimension. The existence of big \CM\ modules (or algebras) is also sufficient to imply a large group of equivalent conjectures, including the Direct Summand Conjecture \cite{directsummand}, Monomial Conjecture \cite{directsummand}, and Canonical Element Conjecture \cite{canonicalelement}. The equal characteristic case of these results was achieved using tight closure methods. One obstruction to extending these techniques to rings of mixed characteristic is the lack of such a closure operation.

In \cite{dietz}, Dietz gave a list of axioms for a closure operation such that for a  local domain $R$, the existence of a closure operation satisfying these properties (which we call a Dietz closure) is equivalent to the existence of a big \CM\ module. The closure operation can be used to show that when module modifications (see Definition \ref{pmmdef}) are applied to $R$, the image of 1 in the resulting module is not contained in the image of the maximal ideal of $R$. When $R$ is complete and has \charp, tight closure is a Dietz closure, as are plus closure and solid closure \cite{dietz}. However, Frobenius closure is not a Dietz closure \cite{dietz}.

In Section \ref{closureprops}, we develop some basic properties of closure operations that are used throughout the paper, including properties of big \CM\ module closures (see Definition \ref{moduleclosure}). This is followed in Section \ref{smallestclosures} by a discussion of properties of closure operations for which there is a smallest closure satisfying the property. In particular, any ring that has a Dietz closure has a smallest Dietz closure, as well as a smallest big \CM\ module closure. In certain rings of dimension 2, the smallest big \CM\ module closure comes from the $S_2$-ification of $R$. Studying the smallest Dietz closure or big \CM\ module closure should provide information on the properties of $R$.

We prove:
\begin{duplicate}[Theorem~\ref{dietzinbigcm}]
Let cl be a Dietz closure on a local domain $(R,m)$. Then cl is contained in the module closure ${{\cl}}_B$ for some big \CM\ module $B$, i.e., for any \fg\ $R$-modules $N \subseteq M$, $N_M^{{\cl}} \subseteq N_M^{{\cl}_B}$.
\end{duplicate}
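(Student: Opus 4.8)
The plan is to build the big \CM\ module $B$ directly from the closure operation $\cl$, using a transfinite sequence of module modifications, and then to check that the closure $\cl_B$ it defines dominates $\cl$. More precisely, I would start from $R$ itself and repeatedly apply \pmm s: given a \sop\ $x_1,\dots,x_d$ and a relation $x_{k+1} u = \sum_{i=1}^k x_i u_i$ in a module $M$ already constructed, adjoin generators killing this relation in the expected way, i.e.\ pass to the module modification $M'$ as in Definition~\ref{pmmdef}. The key input is that $\cl$ being a Dietz closure forces, via the results of \cite{dietz} recalled in the introduction, that $1 \notin mM'$ at every finite stage; taking a direct limit over a well-ordered system that processes every relation in every module that appears, one obtains a module $B$ with $1 \notin mB$ and every \sop\ of $R$ a regular sequence on $B$, i.e.\ a big \CM\ module. (If $R$ is not complete one first reduces to $\widehat{R}$, noting that a Dietz closure on $R$ induces one on $\widehat R$ and that a big \CM\ $\widehat R$-module is a big \CM\ $R$-module, and that $\cl_B$ computed over $\widehat R$ restricts correctly; I expect this reduction to be routine.)

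The real content is the comparison $N_M^{\cl} \subseteq N_M^{\cl_B}$. Fix \fg\ modules $N \subseteq M$ and an element $q \in N_M^{\cl}$; I must show the image of $q$ in $M \otimes_R B$ lies in the image of $N \otimes_R B$. The strategy is to reduce to the generating relation that $q$ satisfies and to track it through the modification process. Because $\cl$ captures colons relative to \sops\ (the \axiomd), being in the closure of $N$ means that after multiplying by a suitable \sop\ element, or more precisely after mapping into an appropriate cokernel, $q$ becomes expressible using a parameter relation of exactly the type that some modification $M_\alpha \to M_{\alpha+1}$ in the construction of $B$ was designed to kill. Concretely, I would use the characterization of module closure: $q \in N_M^{\cl_B}$ iff $1 \otimes q \in \im(N \otimes B \to M \otimes B)$, and the fact that $B$ is a direct limit, so it suffices to find a \emph{single} finite stage $M_\alpha$ in which the image of $q$ already lands in the image of $N$. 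That finite stage is produced by applying (finitely many) module modifications corresponding to the finitely many colon relations witnessing $q \in N_M^{\cl}$.

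To make the previous paragraph precise I would invoke the \axioma\ (functoriality): the inclusion $N \hookrightarrow M$ and the relation data give a map from a "universal" diagram — an $R$-free presentation of the cokernel $M/N$ together with the colon-capturing relation — into the diagram defining the relevant module modification, and functoriality of $\cl$ plus the colon-capturing axiom show the class of $q$ dies after that modification is performed. The main obstacle, and the step I would spend the most care on, is exactly this bookkeeping: showing that the (possibly many, possibly nested) closure relations defining $N_M^{\cl}$ can all be realized simultaneously by module modifications that occur cofinally in the transfinite system building $B$, so that a single $B$ works uniformly for all pairs $N \subseteq M$. I expect this to require being careful that the well-ordered construction of $B$ is "saturated" — every relation in every finitely generated module mapping to a stage eventually gets processed — which is a standard but delicate cardinality/cofinality argument (cf.\ the constructions of big \CM\ algebras via modifications), and that the Dietz axioms are used precisely to guarantee no such modification ever collapses $1$ into $mB$.
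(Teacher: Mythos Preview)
Your construction of $B$ uses only \pmm s, which does yield a big \CM\ module; but the argument you sketch for the containment $N_M^{\cl}\subseteq N_M^{\cl_B}$ does not go through. The claim that ``being in the closure of $N$ means that after multiplying by a suitable \sop\ element \ldots\ $q$ becomes expressible using a parameter relation'' is a misreading of the \axiomd: that axiom is a \emph{constraint} on how $\cl$ interacts with parameter data already present, not a structure theorem saying that every closure relation is witnessed by a parameter relation. For an arbitrary pair $N\subseteq M$ and $q\in N_M^{\cl}$, there is simply no mechanism in the Dietz axioms that produces a \sop\ relation of the kind a \pmm\ kills; so the ``single finite stage $M_\alpha$ in which the image of $q$ already lands in the image of $N$'' need not exist in your $B$.

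The paper closes this gap by introducing a second kind of modification---the \cmm\ (Definition~\ref{cmm})---which is engineered precisely to force $v\otimes x\in\im(G\otimes B\to R^s\otimes B)$ whenever $v\in G_{R^s}^{\cl}$. The module $B$ is then built by \emph{alternating} countably many rounds of \pmm s (to make $B$ big \CM) with rounds of \cmm s (to make $\cl\subseteq\cl_B$), taking the direct limit. The substantive new lemma is that a \cmm\ applied to a $\cl$-\phex\ $R\to M$ yields again a $\cl$-\phex\ $R\to M'$; combined with Dietz's analogous result for \pmm s, this keeps $1\notin mB$ throughout. Your proposal is missing exactly this second type of modification and the phantom-preservation lemma for it; without them the containment $\cl\subseteq\cl_B$ is unproved. (The reduction to $\widehat R$ you mention is neither used nor needed.)
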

Using this result, we prove:

\begin{duplicate}[Theorem~\ref{regularimpliestrivial}, Theorem~\ref{trivialimpliesregular}]
Suppose that $(R,m)$ is a local domain that has at least one Dietz closure (in particular, it suffices for $R$ to have equal characteristic and any dimension, or mixed characteristic and dimension at most 3). Then $R$ is regular if and only if all Dietz closures on $R$ are trivial.
\end{duplicate}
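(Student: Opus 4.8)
The plan is to prove the two implications separately.

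For Theorem~\ref{regularimpliestrivial}, let $\cl$ be any Dietz closure on the regular local ring $(R,m)$. By Theorem~\ref{dietzinbigcm} we have $\cl\subseteq\cl_B$ for some big \CM\ module $B$, and I claim $B$ is faithfully flat over $R$. Indeed, a minimal generating set $x_1,\dots,x_d$ of $m$ is a regular sequence, so the Koszul complex on $x_1,\dots,x_d$ resolves $k=R/m$; since $x_1,\dots,x_d$ is a regular sequence on $B$, this gives $\text{Tor}^R_i(k,B)=0$ for $i>0$. As $R$ is regular, $B$ has finite flat dimension, so $\text{fd}_R B=\sup\{i:\text{Tor}^R_i(k,B)\neq 0\}=0$ and $B$ is flat, faithfully so because $mB\neq B$. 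For a faithfully flat module $B$ the canonical map $M/N\to (M/N)\otimes_R B$ is injective for every pair $N\subseteq M$ of \fg\ modules, so $N_M^{\cl_B}=N$; hence $\cl_B$, and therefore $\cl$, is trivial.

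For Theorem~\ref{trivialimpliesregular} I would argue the contrapositive: if $R$ is not regular (and has at least one Dietz closure, so a big \CM\ module exists), I exhibit a nontrivial big \CM\ module closure, which is a Dietz closure by the results of Section~\ref{closureprops}. First suppose $R$ is not \CM. Choose a \sop\ $x_1,\dots,x_d$ and an index $i$ with $x_{i+1}z\in(x_1,\dots,x_i)$ but $z\notin(x_1,\dots,x_i)$. For any big \CM\ module $B$ the element $x_{i+1}$ is a \nzd\ on $B/(x_1,\dots,x_i)B$, so multiplying the relation by $b\in B$ gives $x_{i+1}(zb)\in(x_1,\dots,x_i)B$ and hence $zB\subseteq(x_1,\dots,x_i)B$; thus $z\in\big((x_1,\dots,x_i)B:_R B\big)=(x_1,\dots,x_i)^{\cl_B}$ while $z\notin(x_1,\dots,x_i)$, so $\cl_B$ is nontrivial.

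The essential case is that $R$ is \CM\ but not regular. Here an arbitrary big \CM\ module will not do: faithfully flat ones (for instance the completion) have trivial closure by the computation above, so a non-flat big \CM\ module must be chosen. I would take a non-free \fg\ maximal \CM\ $R$-module $M$ — such a module always exists, e.g.\ the maximal ideal when $\dim R=1$, a module-finite extension domain inside the integral closure when $R$ is not normal, or a sufficiently high syzygy $\Omega^d_R(k)$ of the residue field in general (non-freeness comes from $\text{pd}_R k=\infty$). Then $M$ is itself a big \CM\ module, and the point is to verify that $\cl_M$ is nontrivial: concretely, to produce a parameter ideal $\mathfrak q$ for which $M/\mathfrak q M$ fails to be a faithful $R/\mathfrak q$-module, so that $\mathfrak q^{\cl_M}=\big(\mathfrak q M:_R M\big)=\Ann_R(M/\mathfrak q M)\supsetneq\mathfrak q$; when $R$ is not normal one simply reads off such a relation from an element of the integral closure not lying in $R$. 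The main obstacle is precisely this last verification, carried out uniformly for every non-regular \CM\ local domain: the naive candidates can fail (for example $\omega_R/\mathfrak q\omega_R$ is the canonical module of the Artinian ring $R/\mathfrak q$, hence faithful, so $\cl_{\omega_R}$ is trivial on parameter ideals), so one needs a careful choice of $M$ together with a length and structure analysis of $M/\mathfrak q M$ as a module over $R/\mathfrak q$.
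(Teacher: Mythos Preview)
Your argument for Theorem~\ref{regularimpliestrivial} is correct; the paper's primary proof goes through strong colon-capturing (Proposition~\ref{strongccprop} and Proposition~\ref{trivialonregular}), but it explicitly notes the faithful-flatness route you take as an alternative, so there is no real disagreement here.

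For Theorem~\ref{trivialimpliesregular}, your reduction to the \CM\ case is fine and matches the paper (colon-capturing forces any big \CM\ module closure to be nontrivial when $R$ is not \CM). The gap is exactly where you flag it: in the \CM\ but non-regular case you never establish that $\cl_M$ is nontrivial, and your proposed test --- finding a parameter ideal $\mathfrak q$ with $\mathfrak q M:_R M\supsetneq\mathfrak q$ --- is the wrong one. Your own $\omega_R$ example shows this, and in fact $\cl_{\omega_R}$ \emph{is} nontrivial when $R$ is not Gorenstein (the paper records this as a consequence), so the failure is in the test ideal, not in the module. More generally, parameter ideals $\mathfrak q$ give Gorenstein (when $R$ is Gorenstein) or at least canonical-module-faithful quotients, so $\mathfrak q M:_R M=\mathfrak q$ can hold for many non-free MCM modules $M$; a ``length and structure analysis of $M/\mathfrak q M$'' will not rescue this.

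The paper's key idea, which you are missing, is to abandon parameter ideals and instead use the \emph{approximately Gorenstein} structure: a decreasing cofinal sequence $I_1\supseteq I_2\supseteq\cdots$ of $m$-primary ideals with each $R/I_t$ Gorenstein (one-dimensional socle, generator $u_t$). The point is a splitting criterion: for $v\in M$, the map $R\to M$, $1\mapsto v$, splits iff $E_R(K)\to E_R(K)\otimes M$ is injective iff $R/I_t\to M/I_tM$ is injective for all $t\gg 0$ iff $u_tv\notin I_tM$ for all $t\gg 0$. If $M$ is a \fg\ MCM module with \emph{no free summand}, then no such map splits, so $M=\bigcup_t(I_tM:_M u_t)$, and by finite generation $u_tM\subseteq I_tM$ for $t\gg 0$. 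Thus $u_t\in I_t^{\cl_M}\setminus I_t$, and $\cl_M$ is nontrivial. Taking $M$ to be (a summand of) $\syz^d(k)$ finishes. The crucial maneuver is testing on \emph{irreducible} $m$-primary ideals rather than parameter ideals, so that nontriviality is detected by a single socle element.
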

In the proof of Theorem \ref{trivialimpliesregular}, we see that a particular module of syzygies gives a nontrivial closure operation, which we can compute explicitly. In Section \ref{otherclosureops}, we use these results to compare Dietz closures to better understood closure operations such as integral closure.

In Theorem \ref{notdietz}, we show that integral closure and regular closure are not Dietz closures using a criterion that can be applied more generally. As a corollary of the above theorems, we also conclude that solid closure is not always a Dietz closure for rings of equal characteristic 0. Studying the reasons why certain closure operations are or are not Dietz closures provides more information on the pieces that are needed to get a good enough closure operation in mixed characteristic.

We conclude with a list of further questions in Section \ref{furtherquestions}. Interestingly, we do not know whether there is a largest big \CM\ module closure, as discussed in Section \ref{largestbigcm}.

\section{Background}
\label{background}

In this section we give the necessary definitions and some notation that will be used throughout the paper.

\begin{defn}
Let $(R,m)$ be a local ring. An $R$-module $B$ is a (balanced) \textit{big \CM\ module} for $R$ if every \sop\ for $R$ is a regular sequence on $B$, and $mB \ne B$. Note that $B$ need not be \fg. A \textit{big \CM\ algebra} for $R$ is a big \CM\ module for $R$ that is also an $R$-algebra.
\end{defn}

\begin{defn}
A \textit{closure operation} cl on a ring $R$ is a map $N \to N_M^{{\cl}}$ of submodules $N$ of \fg\ $R$-modules $M$ such that if $N \subseteq N' \subseteq M$ are \fg\ $R$-modules,
\begin{enumerate}
\item (Extension) $N \subseteq N_M^{\cl}$,
\item (Idempotence) $(N_M^{\cl})_M^{\cl}=N_M^{\cl}$, and
\item (Order-Preservation) $N_M^{\cl} \subseteq (N')_M^{\cl}$.
\end{enumerate}
\end{defn}

\begin{defn}
\label{moduleclosure}
Suppose that $S$ is an $R$-module (resp. $R$-algebra). We can define a closure operation ${\cl}_S$ on $R$ by
\[u \in N_M^{{\cl}_S} \text{ if for all } s \in S,\ s \otimes u \in \im(S \otimes N \to S \otimes M)\]
for any $N \subseteq M$ \fg\ $R$-modules and $u \in M$. This is called a \textit{module (resp. algebra) closure}.
\end{defn}

\begin{remark}
If $S$ is an $R$-algebra, $u \in N_M^{{\cl}_S}$ if and only if 
\[1 \otimes u \in \im(S \otimes N \to S \otimes M).\]
\end{remark}

\begin{citeddefn}[{\cite[Definition~2.2]{dietz}}]
\label{phantomDef}
Let $R$ be a ring with a closure operation cl, $M$ a \fg\ $R$-module, and $\alpha:R \to M$ an injective map with cokernel $Q$. We have a short exact sequence 
\[\begin{CD}
0 @>>> R @>{\alpha}>> M @>>> Q @>>> {0.}
\end{CD}\]
Let $\epsilon \in \Ext_R^1(Q,R)$ be the element corresponding to this short exact sequence via the Yoneda correspondence. We say that $\alpha$ is a \textit{cl-\phex\ }if a cocycle representing $\epsilon$ is in
 $P_1^\vee$ is in $\im(P_0^\vee \to P_1^\vee)_{P_1^\vee}^{\cl}$,
where $P_\bullet$ is a projective resolution of $Q$ and $\vee$ denotes $\Hom_R(-,R)$.
\end{citeddefn}

\begin{remark}
This definition is independent of the choice of $P_\bullet$ \cite[Discussion~2.3]{dietz}.

A split map $\alpha:R \to M$ is cl-\ph\ for any closure operation cl: in this case, the cocycle representing $\epsilon$ is in $\im(P_0^\vee \to P_1^\vee)$. We can view cl-\phex s as maps that are ``almost split" with respect to a particular closure operation.
\end{remark}

\begin{notation}
\label{notation1}
We use some notation from \cite{dietz}. Let $R$ be a ring, $M$ a finitely generated $R$-module, and $\alpha:R \to M$ an injective map with cokernel $Q$. Let $e_1=\alpha(1),e_2, \ldots,e_n$ be generators of $M$ such that the images of $e_2,\ldots,e_n$ in $Q$ form a generating set for $Q$. We have a free presentation for $Q$,
\[
\begin{CD}
{R^m} @>{\nu}>> {R^{n-1}} @>{\mu}>> Q @>>> {0,}
\end{CD}
\]
where $\mu$ sends the generators of $R^{n-1}$ to $e_2,\ldots,e_n$ and $\nu$ has matrix $(b_{ij})_{2 \le i \le n, 1 \le j \le m}$ with respect to some basis for $R^m$. We have a corresponding presentation for $M$, 
\[
\begin{CD}
{R^m} @>{\nu_1}>> {R^n} @>{\mu_1}>> {M} @>>> {0,}
\end{CD}
\]
where $\mu_1$ sends the generators of $R^n$ to $e_1,\ldots,e_n$. Using the same basis for $R^m$ as above, $\nu_1$ has matrix $(b_{ij})_{1 \le i \le n, 1 \le j \le m}$ where $b_{1j}e_1+b_{2j}e_2+\ldots+b_{nj}e_n=0$ in $M$ \cite[Discussion 2.4]{dietz}. The top row of $\nu_1$ gives a matrix representation of the map $\phi:R^m \to R$ in the following diagram:

\[
\begin{CD}
0 @>>> R @>{\alpha}>> M @>>> Q @>>> 0 \\
@. @A{\phi}AA @A{\psi}AA @A{\id_Q}AA @AAA \\
 @. {R^m} @>{\nu}>> {R^{n-1}} @>{\mu}>> Q @>>> 0 \\
\end{CD}
\]
\end{notation}

In \cite[Discussion 2.4]{dietz}, Dietz gives an equivalent definition of a \phex\ using the free presentations $M$ and $Q$ given above. While he assumes that $R$ is a \cld\ and that cl satisfies 2 additional properties, these are not needed for all of the results. We restate some of his results in greater generality below.

\begin{citedlemma}[{\cite[Lemma~2.10]{dietz}}]
\label{lemma2.10}
Let $R$ be a ring possessing a closure operation ${\cl}$. Let $M$ be a finitely generated module, and let $\alpha:R \to M$ be an injective map. Let notation be as above. Then $\alpha$ is a cl-\phex\ of $R$ if and only if the vector $(b_{11},\ldots,b_{1m})^{\tr}$ is in $B_{R^m}^{\cl}$, where $B$ is the $R$-span in $R^m$ of the vectors $(b_{i1},\ldots,b_{im})^{\tr}$ for $2 \le i \le n$.
\end{citedlemma}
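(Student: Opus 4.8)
The plan is to unwind Definition~\ref{phantomDef} in terms of the explicit free presentations of $Q$ and $M$ recorded in Notation~\ref{notation1}, and to recognize the resulting membership condition as the one asserted.

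First fix a free resolution $P_\bullet \to Q$ beginning with the given presentation, so $P_0 = R^{n-1}$, $P_1 = R^m$, $d_1 = \nu$, with augmentation $\mu$, and with $P_2, P_3, \dots$ chosen to continue the resolution. Since the $\cl$-phantom condition is independent of the choice of projective resolution (\cite[Discussion~2.3]{dietz}, cited after Definition~\ref{phantomDef}), it suffices to test it on this $P_\bullet$. Applying $(-)^\vee = \Hom_R(-,R)$ and identifying each free module with its dual via the standard basis, the first map $d_1^\vee = \nu^\vee$ becomes the transpose of the matrix $(b_{ij})_{2 \le i \le n,\, 1 \le j \le m}$, so $\im(\nu^\vee) \subseteq (R^m)^\vee \cong R^m$ is the column span of that transpose, i.e., precisely the $R$-span $B$ of the vectors $(b_{i1},\dots,b_{im})^{\tr}$, $2 \le i \le n$. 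Hence $\alpha$ is a $\cl$-phantom extension iff some cocycle representing the Yoneda class $\epsilon$ of $0 \to R \xrightarrow{\alpha} M \to Q \to 0$ lies in $B_{R^m}^{\cl}$.

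The heart of the argument is to exhibit such a cocycle explicitly and identify it, up to sign, with $(b_{11},\dots,b_{1m})^{\tr}$. Using the lifting diagram of Notation~\ref{notation1}: the map $\psi : R^{n-1} \to M$ sending the basis to $e_2,\dots,e_n$ lifts $\mu$ through $M \twoheadrightarrow Q$, and the induced map $\phi : R^m \to R$ satisfies $\alpha\phi = \psi\nu$. Evaluating on the $j$-th basis vector of $R^m$ and feeding in the relation $b_{1j}e_1 + \dots + b_{nj}e_n = 0$ in $M$ together with $e_1 = \alpha(1)$ identifies $\phi$ with (plus or minus) the top row $(b_{11},\dots,b_{1m})$ of $\nu_1$. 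By the standard correspondence between the Yoneda and projective-resolution descriptions of $\Ext^1$, this $\phi \in P_1^\vee$ represents $\epsilon$; it is a genuine cocycle because $\alpha\,\phi\,d_2 = \psi\,\nu\,d_2 = 0$ and $\alpha$ is injective, forcing $\phi\,d_2 = 0$. Under $(R^m)^\vee \cong R^m$ the functional $\phi$ corresponds to $\pm(b_{11},\dots,b_{1m})^{\tr}$, and since $B$ is an $R$-submodule carried to itself by the automorphism $-1$ of $R^m$, membership of $\phi$ in $B_{R^m}^{\cl}$ is equivalent to $(b_{11},\dots,b_{1m})^{\tr} \in B_{R^m}^{\cl}$. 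Combined with the previous paragraph, this yields the equivalence.

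The step I expect to demand the most care is the middle one: verifying that the $\phi$ extracted from the lift genuinely represents $\epsilon$, and that the phantom condition does not depend on which cocycle over $\epsilon$ one picks. The latter is exactly what is cited from \cite{dietz}, so in practice the work reduces to the clean homological computation above. No part of the argument uses the two extra axioms that \cite{dietz} imposes on $\cl$, which is precisely why the statement holds for an arbitrary closure operation.
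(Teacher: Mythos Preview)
The paper does not supply its own proof of this lemma; it is quoted directly from \cite[Lemma~2.10]{dietz} (hence the \texttt{citedlemma} environment and the absence of a proof block). So there is nothing in the present paper to compare your argument against.

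That said, your unwinding is correct and is essentially the argument one finds in Dietz's original paper: identify $\im(\nu^\vee)$ with the row span $B$, produce the connecting cocycle $\phi$ by lifting $\id_Q$ through $M\twoheadrightarrow Q$ and factoring $\psi\nu$ through $\alpha$, and recognize $\phi$ as the top row of $\nu_1$. The sign issue you flag is genuine---commutativity $\alpha\phi=\psi\nu$ together with the relation $\sum_i b_{ij}e_i=0$ in $M$ forces $\phi(\text{$j$-th basis vector})=-b_{1j}$, not $+b_{1j}$---and your remedy (that $B$, hence $B^{\cl}_{R^m}$, is stable under multiplication by $-1$) is exactly right. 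Your remark that two cocycles for $\epsilon$ differ by an element of $\im(\nu^\vee)=B\subseteq B^{\cl}_{R^m}$, so the condition is independent of the chosen cocycle, is the clean way to justify why the phrase ``a cocycle'' in Definition~\ref{phantomDef} causes no trouble; you do not actually need to appeal to \cite{dietz} for that step. Your final observation that the two extra axioms Dietz imposes on $\cl$ play no role here matches the paper's own comment preceding the lemma.
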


\begin{citeddefn}[\cite{dietz}]
\label{axioms}
Let $(R,m)$ be a fixed local domain and let $N, M,$ and $W$ be arbitrary finitely generated $R$-modules with $N \subseteq M$. A closure operation cl is called a \textit{Dietz closure} if the following axioms hold:
\begin{enumerate}
\item (Functoriality) Let $f:M \to W$ be a homomorphism. Then $f(N_M^{\cl}) \subseteq f(N)_W^{\cl}$.
\item (Semi-residuality) If $N_M^{\cl}=N$, then $0_{M/N}^{\cl}=0$.
\item (Faithfulness) The ideal $m$ is closed in $R$.
\item (Generalized Colon-Capturing) Let $x_1,\ldots,x_{k+1}$ be a partial system of parameters for $R$, and let $J=(x_1,\ldots,x_k)$. Suppose that there exists a surjective homomorphism $f:M \to R/J$ and $v \in M$ such that $f(v)=x_{k+1}+J$. Then $(Rv)_M^{\cl} \cap \ker f \subseteq (Jv)_M^{\cl}$.
\end{enumerate}
\end{citeddefn}

\begin{remark}
The axioms originally included the assumption that $0^{\cl}_R=0$, but this is implied by the other axioms \cite{newdietz}.

A closure operation on any ring $R$ can satisfy the \axioma, the \axiomb, or both. A closure operation on any local ring $R$ can satisfy the \axiomc.
\end{remark}

The proof of the next lemma requires $Q$ to have a minimal generating set, so we assume that $R$ is local for this generalization of \cite[Lemma~2.11]{dietz}:

\begin{lemma}
\label{lemma2.11}
Let $(R,m)$ be a local ring possessing a closure operation ${\cl}$ that satisfies the \axioma, the \axiomb, and the \axiomc. If $M$ is a finitely generated $R$-module such that $\alpha:R \to M$ is cl-phantom, then $\alpha(1) \not\in mM$.
\end{lemma}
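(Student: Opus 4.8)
The plan is to argue by contradiction: assume $\alpha(1) \in mM$, and use the presentation data from Notation \ref{notation1} together with Lemma \ref{lemma2.10} to derive a violation of Faithfulness. Recall that by Lemma \ref{lemma2.10}, $\alpha$ being cl-phantom means the vector $\bm{b}_1 := (b_{11},\ldots,b_{1m})^{\tr}$ lies in $B^{\cl}_{R^m}$, where $B$ is the $R$-span of the vectors $\bm{b}_i := (b_{i1},\ldots,b_{im})^{\tr}$ for $2 \le i \le n$. The hypothesis $\alpha(1) = e_1 \in mM$ should let me write $e_1 = \sum_{i=2}^n c_i e_i$ with all $c_i \in m$ (here I use that $e_2,\ldots,e_n$ generate $Q$ minimally, so they together with a relation express $e_1$; more carefully, $e_1 \in mM = m e_1 + \sum_{i \ge 2} m e_i$, and after absorbing the $me_1$ term using that $1 - (\text{unit})$ is a unit, I get such an expression). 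This relation $e_1 - \sum_{i\ge 2} c_i e_i = 0$ in $M$ is one of the columns of $\nu_1$ up to change of basis, or more precisely it lies in the column space of $\nu_1$; I will need to track how this interacts with the given basis for $R^m$.

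The key mechanism: consider the map $R^m \to R$ given by the top row of $\nu_1$, i.e. $\phi$ sending the $j$-th basis vector to $b_{1j}$. I want to show $\phi(B) \subseteq m$, because then from $\bm{b}_1 \in B^{\cl}_{R^m}$ and Functoriality applied to $\phi : R^m \to R$ we get $\phi(\bm{b}_1) \in \phi(B)^{\cl}_R \subseteq m^{\cl}_R = m$ by Faithfulness. But $\phi(\bm{b}_1) = \sum_j b_{1j} b_{1j}$? No — I need to be more careful about what $\phi(\bm{b}_1)$ is. Let me instead use the structure directly: the relations $\bm{b}_i$ for $i \ge 2$ come from relations $\sum_j b_{ij}(\text{gen}) \mapsto 0$ combined across the presentation, and the genuine content is that $\sum_{i=1}^n b_{ij} e_i = 0$ in $M$ for each $j$. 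The plan is to feed the relation $e_1 = \sum_{i \ge 2} c_i e_i$ into this: substituting, $\sum_{j}(\ldots)$ forces $(b_{1j} + \sum_{i\ge 2} c_i' b_{ij})$-type combinations to be relations among $e_2,\ldots,e_n$, hence to land (as columns) in the span $B$ augmented appropriately — I expect to conclude that $\bm{b}_1 - \sum_{i \ge 2} c_i \bm{b}_i \in$ (column space that maps to zero in a controlled way), so that modulo $B$, the vector $\bm{b}_1$ equals $\sum c_i \bm{b}_i \in mB \subseteq mR^m$. Combined with $\bm{b}_1 \in B^{\cl}_{R^m}$ and Semi-residuality (passing to $R^m/B$, where $0^{\cl} = 0$), the image of $\bm{b}_1$ in $R^m/B$ is both in $0^{\cl}_{R^m/B} = 0$ and... this needs $\bm{b}_1 \in B$, which is exactly the splitting we want to contradict. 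I think the cleanest route is: Semi-residuality gives $\overline{\bm{b}_1} \in 0^{\cl}_{R^m/B} = 0$, so $\bm{b}_1 \in B$; then $\alpha$ splits; then a split $\alpha$ with $\alpha(1) \in mM$ contradicts $mM \ne M$ (Nakayama) or directly contradicts Faithfulness via the splitting map $M \to R$ sending $\alpha(1) \mapsto 1$, whose composite on $mM$ lands in $m$.

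So the skeleton is: (1) reduce, via Lemma \ref{lemma2.10}, to a statement about $\bm{b}_1 \in B^{\cl}_{R^m}$; (2) use Semi-residuality to push into $N := R^m/B$, getting $\overline{\bm{b}_1} \in 0^{\cl}_N$; (3) invoke that $0^{\cl}_R = 0$ (noted in the remark to follow from the axioms) together with Functoriality along a suitable map $N \to R$ — this is where Faithfulness enters, since I will construct a map sending $\overline{\bm{b}_1}$ to something I can force into $m$ yet which ought to be a unit if $\alpha(1) \in mM$; (4) the contradiction. I expect the main obstacle to be step (3): constructing the right functorial map out of $R^m/B$ (equivalently, identifying the correct element of $\Hom_R(Q,R)$ or the right quotient of $R^m$) so that the hypothesis $\alpha(1) \in mM$ translates into "$\phi(\bm{b}_1)$ is a unit" while Faithfulness forces it into $m$. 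This is essentially Dietz's argument in \cite[Lemma~2.11]{dietz}; the only thing to check is that his use of completeness and the two extra axioms is inessential here — I anticipate it is, because the Faithfulness/Semi-residuality/Functoriality trio is exactly what the argument consumes, and minimality of the generating set of $Q$ (hence locality of $R$) is what replaces any appeal to completeness.
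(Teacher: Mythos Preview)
Your proposal circles the right idea but contains one error and one unfilled gap. The error is the ``cleanest route'' in your second paragraph: Semi-residuality gives $0^{\cl}_{R^m/B} = 0$ only when $B$ is already cl-closed in $R^m$, which you do not know; so you cannot conclude $\bm{b}_1 \in B$ that way. What Lemma~\ref{lemma1.2}(a) does give you is $\overline{\bm{b}_1} \in 0^{\cl}_{R^m/B}$, no more. The gap is your step~(3): a map $R^m/B \to R$ sending $\overline{\bm{b}_1}$ to a unit need not exist, because the natural candidate sends $B$ into $m$, not to $0$, and so does not descend. The fix is to skip the quotient entirely and work at the level of $R^m$. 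You already observed that the relation $e_1 - \sum_{i \ge 2} d_i e_i = 0$ (with $d_i \in m$) lies in the column space of $\nu_1$; now actually use that: write $(1,-d_2,\ldots,-d_n)^{\tr} = \nu_1 \cdot (r_1,\ldots,r_m)^{\tr}$, so $\sum_j r_j b_{1j} = 1$ and $\sum_j r_j b_{ij} = -d_i \in m$ for $i \ge 2$. Define $\phi:R^m \to R$ by $\epsilon_j \mapsto r_j$. Then $\phi(\bm{b}_1)=1$ while $\phi(B) \subseteq m$, so Functoriality and order-preservation give $1 = \phi(\bm{b}_1) \in \phi(B)^{\cl}_R \subseteq m^{\cl}_R = m$ by Faithfulness, the desired contradiction.

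The paper itself supplies no proof; it refers to \cite[Lemma~2.11]{dietz} and only notes that locality of $R$ (so that $Q$ admits a minimal generating set) replaces Dietz's standing completeness hypothesis. Dietz's argument takes a slightly different tack from yours: choosing $\bar e_2,\ldots,\bar e_n$ to be a \emph{minimal} generating set of $Q$ forces every $b_{ij}$ with $i\ge 2$ into $m$, so $B \subseteq mR^m$; then $\bm{b}_1 \in B^{\cl}_{R^m} \subseteq (mR^m)^{\cl}_{R^m} = mR^m$ by Lemma~\ref{lemma1.2}(b) together with Faithfulness, so every entry of $\nu_1$ lies in $m$, making $e_1,\ldots,e_n$ a minimal generating set of $M$---whence $e_1 \notin mM$. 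Your route via the explicit relation and the linear functional $\phi$ is a legitimate alternative; it uses locality only for the $(1-c_1)$-is-a-unit step rather than through minimality.
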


\begin{citeddefn}[{\cite[Discussion 5.15]{hhpaper}}]
\label{pmmdef}
Let $R$ be local and $M$ an $R$-module. A \textit{\pmm} of $M$ is a map
\[M \to M'=\frac{M \oplus Rf_1 \oplus \ldots \oplus Rf_k}{R(u \oplus x_1f_1 \oplus \ldots \oplus x_kf_k)},\]
where $x_1,\ldots,x_{k+1}$ is part of a system of parameters for $R$ and $u_1,\ldots,u_k,u$ are elements of $M$ such that
\[x_{k+1}u=x_1u_1+\ldots+x_ku_k.\]
\end{citeddefn}

\begin{remark}
Dietz proves in \cite{dietz} that a local domain $R$ has a Dietz closure if and only if it has a big \CM\ module. In his proof that a Dietz closure can be used to construct a big \CM\ module, one could replace the \axiomd\ with any axiom that implies that given a cl-\phex\ $\alpha:R \to M$ and a \pmm\ $M \to M'$, the map $R \to M'$ is still a cl-\phex. However, we do not know of a good candidate to replace the Axiom.
\end{remark}

\section{Properties of closure operations}
\label{closureprops}

We list some properties of closure operations that will be needed later.

\begin{lemma}
\label{lemma1.2}
Let $R$ be a ring possessing a closure operation cl. In the following, $N,N',$ and $N_i \subseteq M_i$ are all $R$-submodules of the finitely generated $R$-module $M$.
\begin{itemize}
\item[(a)] Suppose that cl satisfies the \axioma\ and the \axiomb. Let $N' \subseteq N \subseteq M$. Then $u \in N_M^{\cl}$ if and only if $u+N' \in (N/N')_{M/N'}^{\cl}$.
\item[(b)] Suppose that cl satisfies the \axioma, $\cal{I}$ is a finite set, $N=\bigoplus_{i \in \cal{I}} N_i$, and $M=\bigoplus_{i \in \cal{I}} M_i$. Then $N_M^{\cl}=\bigoplus_{i \in \cal{I}} (N_i)_{M_i}^{\cl}$.
\item[(c)] Let $\cal{I}$ be any set. If $N_i \subseteq M$ for all $i \in \cal{I}$, then $\left(\bigcap_{i \in \cal{I}} N_i\right)_M^{\cl} \subseteq \bigcap_{i \in \cal{I}} (N_i)_{M_i}^{\cl}$.
\item[(d)] Let $\cal{I}$ be any set. If $N_i$ is cl-closed in $M$ for all $i \in \cal{I}$, then $\bigcap_{i \in \cal{I}} N_i$ is cl-closed in $M$.
\item[(e)] $(N_1+N_2)_M^{\cl}=\left((N_1)_M^{\cl}+(N_2)_M^{\cl}\right)_M^{\cl}$.
\item[(f)] Suppose that cl satisfies the \axioma. Let $N \subseteq N' \subseteq M$. Then $N_{N'}^{\cl} \subseteq N_M^{\cl}$.
\item[(g)] Suppose that $R$ is a domain, cl satisfies the \axioma, $0^{\cl}_R=0$, and $M$ is a torsion-free $R$-module. Then $0^{\cl}_M=0$.
\item[(h)] Suppose that $(R,m)$ is local and cl satisfies the \axioma, the \axiomb, and the \axiomc. Then $N_M^{\cl} \subseteq N+mM$.
\end{itemize}
\end{lemma}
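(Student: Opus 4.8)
The plan is to prove part (h) by contradiction, transporting the statement to the residue field $k = R/m$ via a suitably chosen linear functional and then invoking the \axioma\ together with the other two axioms.

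First I would suppose $u \in N_M^{\cl}$ with $u \notin N + mM$. Since $mM \subseteq N + mM$, the quotient $V = M/(N+mM)$ is a (finite-dimensional, as $M$ is \fg) vector space over $k$, and the image $\bar u$ of $u$ in $V$ is nonzero. Choose a $k$-linear functional $\ell \colon V \to k$ with $\ell(\bar u) \neq 0$, and let $g \colon M \to R/m$ be the composite $M \twoheadrightarrow V \xrightarrow{\ell} k$; this is $R$-linear, $g(u) \neq 0$, and $g(N) = 0$ because $N \subseteq \ker(M \to V)$.

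Next I would apply the \axioma\ to $g$: from $u \in N_M^{\cl}$ we obtain $g(u) \in g(N)_{R/m}^{\cl} = 0_{R/m}^{\cl}$. The remaining point is that $0$ is cl-closed in $R/m$. This follows from the \axiomc\ (which gives $m_R^{\cl} = m$, so $m$ is cl-closed in $R$) combined with the \axiomb\ applied to the inclusion $m \subseteq R$, which yields $0_{R/m}^{\cl} = 0$. Therefore $g(u) = 0$, contradicting the choice of $g$, and hence $N_M^{\cl} \subseteq N + mM$.

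I do not anticipate a real obstacle here; the content is short. The one place that needs a moment's thought is the construction of $g$ — it must annihilate all of $N + mM$ yet be nonzero on $u$ — and this is exactly available because passing to the residue field produces a vector space in which a nonzero vector can be separated from $0$ by a functional. Alternatively, one could first apply part (a) with $N' = N$ to reduce to showing $0_M^{\cl} \subseteq mM$ and then run the same argument with $N = 0$; the two routes are equivalent.
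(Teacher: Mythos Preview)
Your proof of part~(h) is correct. Both you and the paper ultimately rely on the same ingredient: constructing an $R$-linear map to $R/m$ that does not kill the offending element, and then using Faithfulness together with Semi-residuality to conclude $0_{R/m}^{\cl}=0$.

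The routes differ in structure. The paper first reduces via part~(a) to showing $0_M^{\cl}\subseteq mM$, then passes through a minimal free presentation $F_1\to F_0\xrightarrow{\pi} M\to 0$: it proves $(mF_0)_{F_0}^{\cl}=mF_0$ by the functional-to-$k$ argument applied to the vector space $F_0/mF_0$, and then uses $\im(F_1)\subseteq mF_0$ and part~(a) again to push the conclusion down to $M$. Your argument applies the functional-to-$k$ step directly to the vector space $M/(N+mM)$, bypassing both the reduction to $N=0$ and the free resolution entirely. This is shorter and more elementary; the paper's detour through free modules is not needed, since $M/(N+mM)$ is already a $k$-vector space on which one can separate a nonzero vector by a functional. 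Nothing is lost by your shortcut, and the alternative route you mention (reduce first to $N=0$ via part~(a)) is exactly the paper's first move.
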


\begin{proof}
Parts (a) to (e) are proved in \cite[Lemma~1.2]{dietz}.

For part (f), let $f:N' \to M$ be the inclusion map. Then by the \axioma,
\[N_{N'}^{\cl} = f(N_{N'}^{\cl}) \subseteq f(N)_M^{\cl}=N_M^{\cl}.\]  

For part (g), notice that $M \hookrightarrow R^s$ for some $s>0$. By part (f), $0^{\cl}_M \subseteq 0^{\cl}_{R^s}$. By part (b), $0^{\cl}_{R^s}=\bigoplus 0^{\cl}_R=0$.

For part (h), we first prove that for $F$ a \fg\ free module, $(mF)_F^{\cl}=mF$. By part (a), this is equivalent to $0_{F/mF}^{\cl}=0$. Let $u \in 0^{\cl}_{F/mF}$ be nonzero. Then there exists a map $\phi:F/mF \to R/m$ with $\phi(u) \ne 0$. By the \axioma, $\phi(u) \in 0^{\cl}_{R/m}=0$ (since $m^{\cl}_R=m$), which is a contradiction. Hence $0^{\cl}_{F/mF}=0$.

By part (a), it suffices to show that $0^{\cl}_M \subseteq mM$. Let 
\[\begin{CD}
F_1 @>>> F_0 @>{\pi}>> M @>>> 0
\end{CD}\]
be part of a minimal free resolution of $M$. Then $\im(F_1) \subseteq mF_0$. This implies that $\im(F_1)^{\cl}_{F_0} \subseteq (mF_0)_{F_0}^{\cl}=mF_0$. By part (a), $0^{\cl}_M=\pi(\im(F_1)^{\cl}_{F_0})$.
We have
\[0^{\cl}_M = \pi(\im(F_1)^{\cl}_{F_0}) \subseteq \pi(mF_0) = m\pi(F_0)=mM,\]
as desired.
\end{proof}

\begin{lemma}
\label{moduleclosurelemma}
Let $R$ be a ring and $S$ an $R$-module or $R$-algebra. Then ${\cl}_S$ satisfies the \axioma\ and the \axiomb. Hence ${\cl}_S$ has properties (a)-(f) of Lemma \ref{lemma1.2}. Further, for $N \subseteq M$ finitely generated $R$-modules, ${\cl}_S$ satisfies
 \[I^{{\cl}_S}N_M^{{\cl}_S} \subseteq (IN)_M^{{\cl}_S}\] for all $I \subseteq R$.
 In particular, $yN_M^{{\cl}_S} \subseteq (yN)_M^{{\cl}_S}$ for all $y \in R$. 
\end{lemma}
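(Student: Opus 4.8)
The plan is to verify the two axioms straight from the definition of ${\cl}_S$ and then derive the module-theoretic inclusions by a short tensor-product computation, bearing in mind throughout that $-\otimes_R S$ is only right exact. I will use the uniform description that, for \fg\ $R$-modules $N\subseteq M$ and $u\in M$, one has $u\in N_M^{{\cl}_S}$ precisely when $s\otimes u\in\im(S\otimes_R N\to S\otimes_R M)$ for every $s\in S$; by the Remark after Definition \ref{moduleclosure} this also describes ${\cl}_S$ when $S$ is an algebra (take $s=1$), so the module and algebra cases need no separate treatment.

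For the \axioma, given $f\colon M\to W$ and $u\in N_M^{{\cl}_S}$, I would factor the map $N\to W$ induced by $f$ as $N\twoheadrightarrow f(N)\hookrightarrow W$; right exactness makes $S\otimes_R N\to S\otimes_R f(N)$ surjective, so $\im(S\otimes_R N\to S\otimes_R W)=\im(S\otimes_R f(N)\to S\otimes_R W)$. Applying $1\otimes f$ to $s\otimes u$ then puts $s\otimes f(u)$ into this common image for every $s$, i.e.\ $f(u)\in f(N)_W^{{\cl}_S}$. For the \axiomb, assume $N_M^{{\cl}_S}=N$ and let $\bar u\in 0_{M/N}^{{\cl}_S}$ lift to $u\in M$; the right-exact sequence $S\otimes_R N\to S\otimes_R M\to S\otimes_R(M/N)\to 0$ shows that the image of $s\otimes u$ in $S\otimes_R(M/N)$ vanishes exactly when $s\otimes u\in\im(S\otimes_R N\to S\otimes_R M)$, and since this holds for every $s$ we get $u\in N_M^{{\cl}_S}=N$, hence $\bar u=0$. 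With both axioms established, parts (a)--(f) of Lemma \ref{lemma1.2} apply at once: (c)--(e) need no axiom, and (a), (b), (f) need only the two just proved.

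For the inclusion $I^{{\cl}_S}N_M^{{\cl}_S}\subseteq(IN)_M^{{\cl}_S}$ it suffices to handle one product $au$ with $a\in I_R^{{\cl}_S}$ and $u\in N_M^{{\cl}_S}$. Fix $s\in S$. Since $a\in I_R^{{\cl}_S}$, the element $sa\in S$ lies in $IS$, say $sa=\sum_j t_ja_j$ with $t_j\in S$ and $a_j\in I$, so that $s\otimes au=\sum_j t_j\otimes a_ju$ in $S\otimes_R M$. For each $j$, $t_j\otimes u\in\im(S\otimes_R N\to S\otimes_R M)$; multiplying a preimage in $S\otimes_R N$ by $a_j$ lands in $\im(S\otimes_R(IN)\to S\otimes_R N)$, because $a_j$ can be absorbed into the $N$-factor and $a_jN\subseteq IN$. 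Hence $t_j\otimes a_ju=a_j(t_j\otimes u)\in\im(S\otimes_R(IN)\to S\otimes_R M)$, and summing over $j$ gives $s\otimes au\in\im(S\otimes_R(IN)\to S\otimes_R M)$ for every $s$, i.e.\ $au\in(IN)_M^{{\cl}_S}$; the case $I=yR$ is the last assertion. I do not expect a genuine obstacle here; the only points that need care are that $-\otimes_R S$ is merely right exact (so one must argue with membership in images rather than with equalities) and the small trick of sliding the scalar $a_j$ into the submodule to upgrade $N$ to $IN$.
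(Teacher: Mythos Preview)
Your proof is correct and follows essentially the same route as the paper: both verify the Functoriality and Semi-residuality Axioms directly from the definition using right exactness of $-\otimes_R S$, and both prove the inclusion $I^{{\cl}_S}N_M^{{\cl}_S}\subseteq(IN)_M^{{\cl}_S}$ by writing $sa\in IS$ as an $I$-linear combination in $S$ and then sliding the ideal coefficients across the tensor into the $N$-factor to land in $IN$. Your exposition is slightly more explicit about why the image through $N$ coincides with that through $f(N)$ and about lifting to $S\otimes_R N$ before multiplying by $a_j$, but the arguments are the same.
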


\begin{remark}
If $R$ is a domain, then this Lemma implies that ${\cl}_S$ is semi-prime as in \cite{epstein}.
\end{remark}

\begin{proof}
First we show that ${\cl}_S$ satisfies the \axioma\ and the \axiomb. 
Suppose that $N \subseteq M$ and $W$ are finitely generated $R$-modules, and $f:M \to W$ is an $R$-module homomorphism. Let $u \in N_M^{{\cl}_S}$. We will show that $f(u) \in f(N)_W^{{\cl}_S}$. For every $s \in S$, $s \otimes u \in \im(S \otimes N \to S \otimes M)$. Applying $\id_S \otimes_R f$, we get $s \otimes f(u) \in \im(S \otimes f(N) \to S \otimes W)$ for every $s \in S$. So ${\cl}_S$ satisfies the \axioma.

Suppose $N^{{\cl}_S}_M=N$. We will show that $0^{{\cl}_S}_{M/N}=0$. Let $\bar{u} \in 0^{{\cl}_S}_{M/N}$. Then for every $s \in S$, $s \otimes \bar{u}=0$ in $S \otimes M/N$. Since $S \otimes \_$ is right exact, $S \otimes M/N \cong (S \otimes M)/(S \otimes N)$. Thus $s \otimes u \in \im(S \otimes N \to S \otimes M)$. Since this holds for every $s \in S$, $u \in N^{{\cl}_S}_M=N$. Thus $\bar{u}=0$ in $M/N$. So ${\cl}_S$ satisfies the \axiomb.

Now we prove that 
 \[I^{{\cl}_S}N_M^{{\cl}_S} \subseteq (IN)_M^{{\cl}_S}\] 
 for all $I \subseteq R$. Suppose that $u \in N_M^{{\cl}_S}$ and $y \in I^{{\cl}_S}$.  Then for every $s \in S$, 
\[s \otimes u \in \im(S \otimes N \to S \otimes M),\]
 and $ys \in IS$. In particular, for every $s \in S$, 
 \[s \otimes yu=ys \otimes u = i_1(s_1 \otimes u)+i_2(s_2 \otimes u)+\ldots+i_n(s_n \otimes u)\]
  for some $i_1,\ldots,i_n \in I$, $s_1,\ldots,s_n \in S$. But each $i_j(s_j \otimes u)=s_j \otimes i_ju \in \im(S \otimes IN \to S \otimes M)$. Hence $yu \in (IN)_M^{{\cl}_S}$.
  
The last statement follows, because
\[yN_M^{{\cl}_S} \subseteq (y)^{{\cl}_S}N_M^{{\cl}_S} \subseteq (yN)_M^{{\cl}_S}\]
by the previous statement.
\end{proof}

The following lemma allows us to generalize the idea of an algebra closure.

\begin{lemma}
\label{directedfamilyclosure}
Let $\cal{S}$ be a directed family of $R$-algebras. We can define a closure operation ${\cl}_{\cal{S}}$ by $u \in N_M^{{\cl}_{\cal{S}}}$ if for some $S \in \cal{S}$, $u \in N_M^{{\cl}_S}$.
\end{lemma}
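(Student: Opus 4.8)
The plan is to check the three closure-operation axioms for ${\cl}_{\mathcal S}$ — Extension, Idempotence, and Order-Preservation — using that each ${\cl}_S$, $S \in \mathcal S$, is already a closure operation (indeed a module/algebra closure). The one preliminary I would establish is \emph{monotonicity in the algebra}: if there is an $R$-algebra homomorphism $S \to T$, then $N_M^{{\cl}_S} \subseteq N_M^{{\cl}_T}$ for all $N \subseteq M$. This is immediate from the algebra-closure description in the Remark following Definition~\ref{moduleclosure}: $u \in N_M^{{\cl}_S}$ says precisely that $1 \otimes u \in \im(S \otimes_R N \to S \otimes_R M)$, and applying $T \otimes_S -$, together with the canonical identification $T \otimes_S (S \otimes_R -) \cong T \otimes_R -$, transports a witness to show $1 \otimes u \in \im(T \otimes_R N \to T \otimes_R M)$, i.e. $u \in N_M^{{\cl}_T}$. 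Since $\mathcal S$ is directed, monotonicity makes $\{N_M^{{\cl}_S}\}_{S \in \mathcal S}$ a directed family of submodules of $M$, so its union $N_M^{{\cl}_{\mathcal S}}$ is genuinely a submodule of $M$; then Extension and Order-Preservation for ${\cl}_{\mathcal S}$ follow at once from the corresponding properties of each ${\cl}_S$ (using that $\mathcal S$ is nonempty for Extension).

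The real content is Idempotence. The inclusion $N_M^{{\cl}_{\mathcal S}} \subseteq (N_M^{{\cl}_{\mathcal S}})_M^{{\cl}_{\mathcal S}}$ is Extension, so I would concentrate on the reverse containment. Given $u \in (N_M^{{\cl}_{\mathcal S}})_M^{{\cl}_{\mathcal S}}$, by definition $u \in (N_M^{{\cl}_{\mathcal S}})_M^{{\cl}_S}$ for some $S \in \mathcal S$, so $1 \otimes u$ is the image of some $\eta \in S \otimes_R N_M^{{\cl}_{\mathcal S}}$ under the map induced by $N_M^{{\cl}_{\mathcal S}} \hookrightarrow M$. Expressing $\eta$ as a \emph{finite} sum $\sum_{i=1}^k s_i \otimes v_i$ with $v_i \in N_M^{{\cl}_{\mathcal S}}$, each $v_i$ lies in $N_M^{{\cl}_{S_i}}$ for some $S_i \in \mathcal S$, and directedness supplies a single $T \in \mathcal S$ admitting $R$-algebra maps from $S$ and from all of $S_1, \ldots, S_k$. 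Monotonicity then puts every $v_i$ in $N_M^{{\cl}_T}$, so base-changing the witness $\eta$ along $S \to T$ exhibits $1 \otimes u$ as lying in $\im(T \otimes_R N_M^{{\cl}_T} \to T \otimes_R M)$. Hence $u \in (N_M^{{\cl}_T})_M^{{\cl}_T}$, which equals $N_M^{{\cl}_T}$ by idempotence of ${\cl}_T$, and therefore $u \in N_M^{{\cl}_{\mathcal S}}$, as needed.

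I expect the only genuine obstacle to be this Idempotence step, since it is the sole place where directedness of $\mathcal S$ is used essentially; the crux is the observation that a membership witness $\eta$ involves only finitely many elements of the (a priori infinitely generated) submodule $N_M^{{\cl}_{\mathcal S}}$, so one algebra $T$ can be chosen large enough to dominate $S$ together with all the algebras needed to witness those finitely many elements. Everything else is routine base-change bookkeeping, where the only point deserving care is verifying that the relevant squares of natural maps built from $T \otimes_S (S \otimes_R -) \cong T \otimes_R -$ commute — which is standard.
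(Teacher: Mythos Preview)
Your proposal is correct and follows essentially the same approach as the paper: both arguments verify the submodule property via directedness, dispatch Extension and Order-Preservation quickly, and prove Idempotence by writing $1\otimes u=\sum s_i\otimes u_i$ in $S\otimes M$ and using directedness to find a single $T$ dominating the finitely many algebras involved. Your version is in fact slightly more careful than the paper's, which neglects to say explicitly that $T$ must dominate $S$ as well as the $S_i$; you also package the final step via idempotence of ${\cl}_T$ rather than concluding $1\otimes u\in\im(T\otimes N\to T\otimes M)$ directly, but this is a cosmetic difference.
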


\begin{proof}
To see that $N_M^{{\cl}_{\cal{S}}}$ is a submodule of $M$, let $u,v \in N_M^{{\cl}_{\cal{S}}}$. It is clear that for any $r \in R$, $ru \in N_M^{{\cl}_{\cal{S}}}$.To see that $u+v \in N_M^{{\cl}_{\cal{S}}}$, note that there is some $S,S' \in \cal{S}$ such that $u \in N_M^{{\cl}_S}$ and $v \in N_M^{{\cl}_{S'}}$. Since $\cal{S}$ is a directed family, there is some $T \in \cal{S}$ such that $S,S'$ both map to $T$. We will have $1 \otimes u,1\otimes v \in \im(T \otimes N \to T \otimes M)$, so $1 \otimes (u+v) \in \im(T \otimes N \to T \otimes M)$. Hence $u+v \in N_M^{{\cl}_T} \subseteq N_M^{{\cl}_{\cal{S}}}$.

The extension and order-preservation properties of a closure operation are not difficult to prove. We prove the idempotence property. Let $u \in (N_M^{{\cl}_{\cal{S}}})_M^{{\cl}_{\cal{S}}}$. 
Then for some $S \in \cal{S}$, $1 \otimes u \in \im(S \otimes N_M^{{\cl}_{\cal{S}}} \to S \otimes M)$, say $1 \otimes u=\sum_{i=1}^n s_i \otimes u_i$ with the $u_i \in N_M^{{\cl}_{\cal{S}}}$. For each $i$, there is some $S_i \in \cal{S}$ such that $u_i \in N_M^{{\cl}_{S_i}}$. There is some $T \in \cal{S}$ such that each $S_i$ maps to $T$. Hence $1 \otimes u \in \im(T \otimes N \to T \otimes M)$.
\end{proof}

\begin{prop}
Let cl be a closure operation that commutes with finite direct sums (in particular, it is enough to assume that cl satisfies the \axioma). Suppose the map $R \to M$ that sends $1 \mapsto u$ is cl-phantom, as is the map $R \to N$ that sends $1 \mapsto v$. Then the map $f:R \to (M \oplus N)/(u \oplus -v)$ that sends $1 \mapsto (u,0)=(0,v)$ is cl-phantom, too. Further, any \phex\ $R \to Q$ that factors through both $M$ and $N$ factors through $(M \oplus N)/(u \oplus -v)$ as well.
\end{prop}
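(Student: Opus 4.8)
The first claim will follow from Lemma \ref{lemma2.10} once we produce a presentation of the pushout $P:=(M\oplus N)/R(u\oplus -v)$ adapted to the map $f:R\to P$. First note that $f$ is injective and $P$ is finitely generated: if $f(s)=0$, then $(su,0)=r\cdot(u,-v)$ in $M\oplus N$ for some $r\in R$, so $rv=0$, hence $r=0$ since $\beta:R\to N$ is injective, hence $su=0$ and $s=0$ since $\alpha:R\to M$ is injective. Next, apply Notation \ref{notation1} to $\alpha$ to get generators $e_1=u,e_2,\dots,e_n$ of $M$ and relations $\sum_i b_{ij}e_i=0$ for $1\le j\le m$, and to $\beta$ to get generators $f_1=v,f_2,\dots,f_{n'}$ of $N$ and relations $\sum_i c_{ij}f_i=0$ for $1\le j\le m'$. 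In $P$ the images of $(u,0)$ and $(0,v)$ coincide; using this to eliminate the redundant generator $\bar f_1=\bar e_1$, one obtains a presentation $R^{m+m'}\to R^{\,n+n'-1}\to P\to 0$ whose distinguished generator is $f(1)$, whose relation matrix has top row $(b_{11},\dots,b_{1m},c_{11},\dots,c_{1m'})$, and whose remaining $n+n'-2$ rows span $B\oplus C$ inside $R^m\oplus R^{m'}=R^{m+m'}$, where $B\subseteq R^m$ is the span of the rows $(b_{i1},\dots,b_{im})$ for $2\le i\le n$ and $C\subseteq R^{m'}$ the span of the rows $(c_{i1},\dots,c_{im'})$ for $2\le i\le n'$. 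The block form is automatic, since the relations coming from $M$ involve none of the $\bar f_i$ and those coming from $N$ involve no $\bar e_i$ with $i\ge 2$; keeping this bookkeeping straight is the one point that requires care.

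By Lemma \ref{lemma2.10}, $f$ is a cl-\phex\ if and only if $(b_{11},\dots,b_{1m},c_{11},\dots,c_{1m'})^{\tr}$ lies in $(B\oplus C)^{\cl}_{R^{m+m'}}$. Because cl commutes with finite direct sums---which by Lemma \ref{lemma1.2}(b) holds whenever cl satisfies the \axioma---we have $(B\oplus C)^{\cl}_{R^{m+m'}}=B^{\cl}_{R^m}\oplus C^{\cl}_{R^{m'}}$. Applying Lemma \ref{lemma2.10} to $\alpha$ and to $\beta$, the hypotheses that both are cl-\phex s give $(b_{11},\dots,b_{1m})^{\tr}\in B^{\cl}_{R^m}$ and $(c_{11},\dots,c_{1m'})^{\tr}\in C^{\cl}_{R^{m'}}$, so their concatenation lies in $B^{\cl}_{R^m}\oplus C^{\cl}_{R^{m'}}$; hence $f$ is a cl-\phex.

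For the last sentence, observe that $P=\operatorname{coker}\!\big(R\xrightarrow{(\alpha,-\beta)}M\oplus N\big)$ is the pushout of $M\xleftarrow{\alpha}R\xrightarrow{\beta}N$. Thus if a \phex\ $g:R\to Q$ factors through both $M$ and $N$---say $g=h\circ\alpha=h'\circ\beta$ for some $h:M\to Q$ and $h':N\to Q$---then $(x,y)\mapsto h(x)+h'(y)$ defines a map $M\oplus N\to Q$ that kills $(u,-v)$ and hence descends to $\bar h:P\to Q$ with $\bar h\circ f=g$; that is, $g$ factors through $P$. The main obstacle is the first paragraph: building the adapted presentation of $P$ and correctly identifying its associated submodule of $R^{m+m'}$ as $B\oplus C$; after that, the argument is a direct application of Lemmas \ref{lemma2.10} and \ref{lemma1.2}(b), and the final assertion is just the universal property of the pushout.
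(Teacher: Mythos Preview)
Your proof is correct and follows essentially the same route as the paper. The paper works directly from Definition~\ref{phantomDef}, observing that the cokernel of $f$ is $Q_M\oplus Q_N$ and taking the direct sum of projective resolutions, while you carry out the equivalent argument via Lemma~\ref{lemma2.10} in terms of explicit presentation matrices; the key step in both is the same---using that cl commutes with finite direct sums to split the phantom condition as $B^{\cl}_{R^m}\oplus C^{\cl}_{R^{m'}}$.
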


Note: If $f$ split, we would have $M=R \oplus M_0$, $N=R \oplus N_0$, and $(M \oplus N)/(u \oplus -v)=R \oplus (M_0 \oplus N_0)$. 

\begin{proof}
The last statement is automatic from the definition of a push-out. The cokernel $f$ is the direct sum of the cokernels of the maps $R \to M$ and $R \to N$, and the direct sum of free resolutions $P_\bullet$ and $P'_\bullet$, respectively, of these cokernels gives us a free resolution of the cokernel of $f$. If $\phi:P_1 \to R$ and $\phi':P_1' \to R$ are maps induced by the identity map on the cokernels, then the hypothesis tells us that 
\[\phi \in \left(\im(\Hom(P_0,R) \to \Hom(P_1,R))\right)^{\cl}_{\Hom(P_1,R)}\]
and
\[\phi' \in \left(\im(\Hom(P'_0,R) \to \Hom(P'_1,R))\right)^{\cl}_{\Hom(P'_1,R)}.\]
Since cl commutes with direct sums, we get
\[\phi \oplus \phi' \in \left(\im(\Hom(P_0 \oplus P'_0,R) \to \Hom(P_1 \oplus P'_1,R))\right)^{\cl}_{\Hom(P_1 \oplus P'_1,R)},\]
as desired.
\end{proof}

\begin{prop}
\label{containment}
Let $S$ and $T$ be $R$-modules such that for each $t \in T$, there is a map $S \to T$ whose image contains $t$. Then ${\cl}_S \subseteq {\cl}_T$, i.e., for any \fg\ $R$-modules $N \subseteq M$, $N_M^{{\cl}_S} \subseteq N_M^{{\cl}_T}$. 
\end{prop}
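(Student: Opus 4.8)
The plan is to unwind the definition of module closure and perform a short diagram chase exploiting naturality of the tensor product. Fix finitely generated $R$-modules $N \subseteq M$, an element $u \in N_M^{{\cl}_S}$, and an arbitrary $t \in T$; the goal is to show $t \otimes u \in \im(T \otimes N \to T \otimes M)$, since then, $t$ being arbitrary, $u \in N_M^{{\cl}_T}$ by definition.

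First I would invoke the hypothesis: there is an $R$-linear map $g : S \to T$ and an element $s \in S$ with $g(s) = t$. Next, since $u \in N_M^{{\cl}_S}$, by definition $s \otimes u$ lies in the image of $S \otimes N \to S \otimes M$, so we may write $s \otimes u = \sum_i s_i \otimes n_i$ in $S \otimes M$ for some $s_i \in S$ and $n_i \in N$ (here the map $S \otimes N \to S \otimes M$ need not be injective, so this is the assertion that $s\otimes u$ equals the image of an actual element of $S\otimes N$).

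Finally I would apply $g \otimes \id_M : S \otimes M \to T \otimes M$ to this equation. By functoriality of $g \otimes -$, the square with horizontal maps $S \otimes N \to S \otimes M$ and $T \otimes N \to T \otimes M$ and vertical maps $g \otimes \id_N$, $g \otimes \id_M$ commutes, so $g \otimes \id_M$ carries $\im(S \otimes N \to S \otimes M)$ into $\im(T \otimes N \to T \otimes M)$. Concretely, $(g \otimes \id_M)(s \otimes u) = g(s) \otimes u = t \otimes u$ on the one hand, and $(g \otimes \id_M)\bigl(\sum_i s_i \otimes n_i\bigr) = \sum_i g(s_i) \otimes n_i$ on the other, and the latter visibly lies in $\im(T \otimes N \to T \otimes M)$. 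Hence $t \otimes u$ does too, which is what we wanted.

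I do not expect a genuine obstacle here: the argument is just the definition plus naturality. The only points needing a little care are the bookkeeping just mentioned (membership in the image means equality to the image of a genuine element), and the observation that even if $S$ and $T$ are $R$-algebras one should only use the module structure, since the maps $S \to T$ provided by the hypothesis need not be ring maps — so the convenient reformulation "$u \in N_M^{{\cl}_S}$ iff $1 \otimes u \in \im(S\otimes N \to S \otimes M)$" is not available on the $T$-side, and I would work with the module-theoretic definition throughout.
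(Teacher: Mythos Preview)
Your proof is correct and follows essentially the same approach as the paper: both pick a preimage $s$ of $t$ under a map $g\colon S\to T$, use the hypothesis $u\in N_M^{{\cl}_S}$ to write $s\otimes u$ as the image of an element of $S\otimes N$, and then apply $g\otimes\id_M$ together with the commutativity of the obvious square to conclude. The only cosmetic difference is that the paper names the element of $S\otimes N$ as $y$ while you expand it as a sum $\sum_i s_i\otimes n_i$.
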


\begin{proof}
Suppose that $N \subseteq M$ are \fg\ $R$-modules, and that $u \in N_M^{{\cl}_S}$. We will show that $u \in N_M^{{\cl}_T}$. Since $u \in N_M^{{\cl}_S}$, for each $s \in S$, $s \otimes u \in \im(S \otimes N \to S \otimes M)$. Let $t \in T$. Then there is some map $f:S \to T$ whose image contains $t$, say $s' \mapsto t$. There is some element $y$ of $S \otimes N$ that maps to $s' \otimes u$ in $S \otimes M$. The image $(f \otimes \id)(y)$ of $y$ in $T \otimes N$ maps to $t \otimes u$ in $T \otimes M$, by the commutativity of the following diagram:

\[\begin{CD}
{S \otimes N} @>>> {S \otimes M} \\
@V{f \otimes \id}VV @V{f \otimes \id}VV \\
{T \otimes N} @>>> {T \otimes M} \\
\end{CD}\]

Hence $t \otimes u \in \im(T \otimes N \to T \otimes M)$ for every $t \in T$, which implies that $u \in N_M^{{\cl}_T}$.
\end{proof}

\begin{notation} 
We refer to the intersection of two closure operations cl and ${\cl}'$, as defined in \cite{epstein}. Let $N \subseteq M$ be \fg\ $R$-modules. We say that
\[u \in N_M^{{\cl} \cap {\cl}'} \text{ if } u \in N_M^{\cl} \cap N_M^{{\cl}'}.\]
\end{notation}

\begin{prop}
\label{intersection}
Let $S$ and $T$ be $R$-modules. Then ${\cl}_{S \oplus T}={\cl}_S \cap {\cl}_T$.
\end{prop}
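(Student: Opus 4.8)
The plan is to prove both containments, $\mathrm{cl}_{S \oplus T} \subseteq \mathrm{cl}_S \cap \mathrm{cl}_T$ and $\mathrm{cl}_S \cap \mathrm{cl}_T \subseteq \mathrm{cl}_{S \oplus T}$, by unwinding the defining condition of a module closure (Definition \ref{moduleclosure}) and exploiting the compatibility of tensor products with finite direct sums. Fix finitely generated $R$-modules $N \subseteq M$ and an element $u \in M$. The key observation to record at the outset is the natural isomorphism $(S \oplus T) \otimes_R M \cong (S \otimes_R M) \oplus (T \otimes_R M)$, and that under this identification the map $(S \oplus T) \otimes_R N \to (S \oplus T) \otimes_R M$ is the direct sum of the maps $S \otimes_R N \to S \otimes_R M$ and $T \otimes_R N \to T \otimes_R M$. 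Consequently, for an element $w = (w_S, w_T) \in (S \oplus T) \otimes_R M$, we have $w \in \im\big((S \oplus T) \otimes N \to (S \oplus T) \otimes M\big)$ if and only if $w_S \in \im(S \otimes N \to S \otimes M)$ and $w_T \in \im(T \otimes N \to T \otimes M)$.

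For the forward containment, suppose $u \in N_M^{\mathrm{cl}_{S \oplus T}}$. Then for every element $(s,t) \in S \oplus T$, the element $(s,t) \otimes u$ lies in the image of $(S \oplus T) \otimes N$. Under the identification above, $(s,t) \otimes u$ corresponds to $(s \otimes u, t \otimes u)$, so by the observation we get $s \otimes u \in \im(S \otimes N \to S \otimes M)$ and $t \otimes u \in \im(T \otimes N \to T \otimes M)$. Since this holds for every $(s,t)$, in particular taking $t = 0$ shows $s \otimes u$ lies in the image for all $s \in S$, so $u \in N_M^{\mathrm{cl}_S}$; taking $s = 0$ similarly gives $u \in N_M^{\mathrm{cl}_T}$. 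Hence $u \in N_M^{\mathrm{cl}_S \cap \mathrm{cl}_T}$.

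For the reverse containment, suppose $u \in N_M^{\mathrm{cl}_S}$ and $u \in N_M^{\mathrm{cl}_T}$. Take any $(s,t) \in S \oplus T$. By hypothesis $s \otimes u \in \im(S \otimes N \to S \otimes M)$ and $t \otimes u \in \im(T \otimes N \to T \otimes M)$, so $(s \otimes u, t \otimes u) \in \im\big((S \oplus T) \otimes N \to (S \oplus T) \otimes M\big)$ by the observation, and this element is exactly $(s,t) \otimes u$ under the canonical identification. Since $(s,t)$ was arbitrary, $u \in N_M^{\mathrm{cl}_{S \oplus T}}$.

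This argument is essentially formal, so I do not anticipate any genuine obstacle; the only point requiring a small amount of care is checking that the canonical isomorphism $(S \oplus T) \otimes_R M \cong (S \otimes_R M) \oplus (T \otimes_R M)$ is compatible with the inclusion-induced maps and that it sends $(s,t) \otimes u$ to $(s \otimes u, t \otimes u)$, which is immediate from the universal property of the tensor product and of the direct sum. The same proof works verbatim when $S$ and $T$ are $R$-algebras, using the algebra characterization in the Remark following Definition \ref{moduleclosure} (test only $1 \otimes u$), since $S \oplus T$ is then being treated as an $R$-module in the statement.
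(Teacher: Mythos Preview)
Your proof is correct and follows essentially the same approach as the paper: both directions are obtained by using the canonical identification $(S \oplus T) \otimes M \cong (S \otimes M) \oplus (T \otimes M)$, under which $(s,t) \otimes u$ corresponds to $(s \otimes u, t \otimes u)$ and the image of $(S \oplus T) \otimes N$ splits as the direct sum of the two images. Your write-up is slightly more explicit about the compatibility of the isomorphism with the inclusion-induced maps, but the argument is the same.
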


\begin{proof}
Suppose that $N \subseteq M$ are \fg\ $R$-modules, and $u \in N_M^{{\cl}_{S \oplus T}}$. Then for each $(s,t) \in S \oplus T$, 
\[(s,t) \otimes u \in \im((S \oplus T) \otimes N \to (S \oplus T) \otimes M)=\im(S \otimes N \to S \otimes M) \oplus \im(T \otimes N \to T \otimes M).\]
So $s \otimes u$ is in the first image, and $t \otimes u$ is in the second. Thus $u \in N_M^{{\cl}_S} \cap N_M^{{\cl}_T}$. If $u \in N_M^{{\cl}_S} \cap N_M^{{\cl}_T}$, then for each $s \in S$, $s \otimes u \in \im(S \otimes N \to S \otimes M)$ and for each $t \in T$, $t \otimes u \in \im(T \otimes N \to T \otimes M)$. Hence $(s,t) \otimes u \in \im((S \oplus T) \otimes N \to (S \oplus T) \otimes M).$
\end{proof}

\subsection{Properties of Big \CM\ Module Closures}
\label{bigcmclosureprops}

We give several useful properties of big \CM\ module closures.

\begin{defn}
\label{strongcc}
Let $cl$ be a closure operation on a ring $R$. 
\begin{enumerate}
\item We say that cl satisfies \textit{colon-capturing} if for every partial \sop\ $x_1,\ldots,x_{k+1}$ on $R$, 
\[(x_1,\ldots,x_k):x_{k+1} \subseteq (x_1,\ldots,x_k)^{\cl}.\]
\item We say that cl satisfies \textit{strong colon-capturing, version A}, if for every partial \sop\ $x_1,\ldots,x_k$ on $R$,
\[(x_1^t,x_2,\ldots,x_k)^{\cl} : x_1^a \subseteq (x_1^{t-a},x_2,\ldots,x_k)^{\cl}\]
for all $a<t$.
\item We say that cl satisfies \textit{strong colon-capturing, version B}, if for every partial \sop\ $x_1,\ldots,x_{k+1}$ on $R$,
\[(x_1,\ldots,x_k)^{\cl}:x_{k+1} \subseteq (x_1,\ldots,x_k)^{\cl}.\]
This is a stronger condition than colon-capturing.
\end{enumerate}
\end{defn}

\begin{prop}
\label{strongccprop}
Let $B$ be a big \CM\ module over a local domain $R$. Then the module closure ${\cl}_B$ satisfies strong colon-capturing, version A.
\end{prop}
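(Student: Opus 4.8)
The plan is to unwind the definition of ${\cl}_B$ and reduce the colon-capturing statement to the fact that $x_1, \ldots, x_k$ is a regular sequence on $B$. So suppose $x_1, \ldots, x_k$ is a partial system of parameters for $R$, fix $a < t$, and let $y \in (x_1^t, x_2, \ldots, x_k)^{\cl_B} : x_1^a$; I want to show $y \in (x_1^{t-a}, x_2, \ldots, x_k)^{\cl_B}$. By the remark following Definition~\ref{moduleclosure} (or directly for modules), $y \in (x_1^t, x_2, \ldots, x_k)^{\cl_B}$ in $R$ means that for every $b \in B$ we have $b \otimes (x_1^a y) \in \im(B \otimes (x_1^t, x_2, \ldots, x_k) \to B \otimes R) = (x_1^t, x_2, \ldots, x_k)B$, i.e. $x_1^a y b \in (x_1^t, x_2, \ldots, x_k)B$. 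What I must show is that $yb \in (x_1^{t-a}, x_2, \ldots, x_k)B$ for every $b \in B$.

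So the heart of the matter is the following statement about the module $B$ alone: if $x_1, \ldots, x_k$ is a regular sequence on $B$ and $z \in B$ satisfies $x_1^a z \in (x_1^t, x_2, \ldots, x_k)B$ with $a < t$, then $z \in (x_1^{t-a}, x_2, \ldots, x_k)B$. I would prove this by induction on $a$. Write $x_1^a z = x_1^t b_1 + x_2 b_2 + \cdots + x_k b_k$. Since $a \le t$ we may first handle the case $a = 0$, which is trivial, and otherwise $a \ge 1$; then $x_1 \mid x_1^a z - x_2 b_2 - \cdots - x_k b_k$, and working modulo $(x_2, \ldots, x_k)B$ — on which $x_1$ is a nonzerodivisor, since $x_1, \ldots, x_k$ is a regular sequence and hence so is any permutation, here $x_2, \ldots, x_k, x_1$ — we may cancel one factor of $x_1$ repeatedly. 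More precisely, working in $\bar B = B/(x_2, \ldots, x_k)B$, we have $x_1^a \bar z = x_1^t \bar b_1$, and since $x_1$ is a nonzerodivisor on $\bar B$ and $a < t$ we get $\bar z = x_1^{t-a} \bar b_1$, i.e. $z \in (x_1^{t-a}, x_2, \ldots, x_k)B$, as desired. Applying this with $z = yb$ for each $b \in B$ gives $yb \in (x_1^{t-a}, x_2, \ldots, x_k)B$ for all $b$, which is exactly $y \in (x_1^{t-a}, x_2, \ldots, x_k)^{\cl_B}$.

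The main obstacle — though a mild one — is making sure the regular-sequence hypothesis is used correctly: a big Cohen-Macaulay module need not have $x_1, \ldots, x_k$ act as a regular sequence in every order a priori, but for a \emph{balanced} big Cohen-Macaulay module every permutation of a system of parameters (and of a partial system of parameters) is again a regular sequence on $B$, which is what licenses cancelling $x_1$ modulo $(x_2, \ldots, x_k)B$. Since the standing hypothesis is that $B$ is a (balanced) big \CM\ module, this is available. The rest is the bookkeeping of translating between the tensor-product definition of ${\cl}_B$ and divisibility statements in $B$, which is routine given Lemma~\ref{moduleclosurelemma} and the structure of $B \otimes_R (-)$ on ideals of $R$.
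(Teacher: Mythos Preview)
Your proposal is correct and takes essentially the same approach as the paper: both reduce the claim to the statement that $x_1^a z \in (x_1^t, x_2,\ldots,x_k)B$ forces $z \in (x_1^{t-a}, x_2,\ldots,x_k)B$, and both establish this by rewriting $x_1^a z - x_1^t b_1 \in (x_2,\ldots,x_k)B$ and cancelling $x_1^a$ modulo $(x_2,\ldots,x_k)B$. You are slightly more explicit than the paper about why $x_1$ is a nonzerodivisor on $B/(x_2,\ldots,x_k)B$ (invoking that a balanced big \CM\ module makes every permutation of a system of parameters regular), whereas the paper simply says ``since $B$ is a big \CM\ module''; otherwise the arguments are identical, and your mention of induction on $a$ is unnecessary since the direct cancellation already works.
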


\begin{proof}
Let $x_1,\ldots,x_k$ be a partial \sop\ on $R$.
Suppose that $a<t$, and that $u \in (x_1^t,x_2,\ldots,x_k)^{\cl}:x_1^a$. In other words, for each $b \in B$,
\[ux_1^ab \in (x_1^t,\ldots,x_k)B,\]
say $ux_1^ab=x_1^tb_1+x_2b_2+\ldots+x_kb_k$. Then $x_1^a(ub-x_1^{t-a}b_1) \in (x_2,\ldots,x_k)B$. Since $B$ is a big \CM\ module, this implies that $ub-x_1^{t-a}b_1 \in (x_2,\ldots,x_k)B$. Hence $ub \in (x_1^{t-a},x_2,\ldots,x_k)B$. Since this holds for each $b \in B$, $u \in (x_1^{t-a},x_2,\ldots,x_k)^{{\cl}_B}$.
\end{proof}

\begin{prop}
\label{otherstrongcc}
Let $B$ be a big \CM\ module over a local domain $R$. Then ${\cl}_B$ satisfies strong colon-capturing, version B. As a consequence, ${\cl}_B$ satisfies colon-capturing. 
\end{prop}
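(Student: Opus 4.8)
The plan is to deduce strong colon-capturing version B from the version A statement established in Proposition~\ref{strongccprop}, applied to a system of parameters containing $x_{k+1}$ as its first entry. Suppose $x_1,\dots,x_{k+1}$ is a partial system of parameters on $R$ and let $u \in (x_1,\dots,x_k)^{{\cl}_B} : x_{k+1}$. I want to show $u \in (x_1,\dots,x_k)^{{\cl}_B}$, i.e.\ that $ub \in (x_1,\dots,x_k)B$ for every $b \in B$. Fix $b \in B$. By hypothesis $x_{k+1}u \in (x_1,\dots,x_k)^{{\cl}_B}$, so $x_{k+1}ub \in (x_1,\dots,x_k)B$; write $x_{k+1}ub = x_1b_1 + \dots + x_kb_k$ for some $b_i \in B$.

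The key move is to exploit that $x_{k+1}, x_1, \dots, x_k$ is (after reordering) a partial system of parameters, and in particular $x_{k+1}, x_1, \dots, x_k$ is a regular sequence on $B$ in that order. From $x_{k+1}ub = x_1b_1 + \dots + x_kb_k$ we get that $x_{k+1}(ub) \in (x_1,\dots,x_k)B$, so since $x_{k+1}$ is a nonzerodivisor on $B/(x_1,\dots,x_k)B$ we conclude $ub \in (x_1,\dots,x_k)B$ directly. Here I am using that any permutation of a partial system of parameters is again one, and that a big \CM\ module is balanced, so every system of parameters — in any order — is a regular sequence on $B$; equivalently $B/(x_1,\dots,x_k)B$ has $x_{k+1}$ as a nonzerodivisor. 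Since $b \in B$ was arbitrary, $u \in (x_1,\dots,x_k)^{{\cl}_B}$, which is exactly strong colon-capturing version B.

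For the consequence that ${\cl}_B$ satisfies colon-capturing: given a partial system of parameters $x_1,\dots,x_{k+1}$ and $u \in (x_1,\dots,x_k) : x_{k+1}$, the Extension property gives $(x_1,\dots,x_k) \subseteq (x_1,\dots,x_k)^{{\cl}_B}$, so $u \in (x_1,\dots,x_k)^{{\cl}_B} : x_{k+1}$, and then strong colon-capturing version B yields $u \in (x_1,\dots,x_k)^{{\cl}_B}$. This is precisely the colon-capturing condition of Definition~\ref{strongcc}(1).

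The only point requiring care — and the likeliest source of a gap — is the claim that $x_{k+1}$ remains a nonzerodivisor on $B/(x_1,\dots,x_k)B$. This is where balancedness of the big \CM\ module is essential: for a balanced big \CM\ module every permutation of a system of parameters is a regular sequence, and partial systems of parameters extend to full ones, so in particular reordering $x_1,\dots,x_{k+1}$ to put $x_{k+1}$ last in a regular sequence $x_1,\dots,x_k,x_{k+1}$ on $B$ is legitimate. I would make sure the statement of Proposition~\ref{otherstrongcc} (or the ambient hypotheses) carries the balanced hypothesis, or else invoke the standard fact that any big \CM\ module over a local ring can be modified to a balanced one; given the paper's conventions ("balanced" appears in the definition), this is already in force.
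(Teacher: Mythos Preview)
Your argument is correct and is essentially identical to the paper's: unwind the definition of ${\cl}_B$ to get $x_{k+1}(ub)\in(x_1,\dots,x_k)B$, then use that $x_1,\dots,x_{k+1}$ is a regular sequence on the balanced big \CM\ module $B$ to conclude $ub\in(x_1,\dots,x_k)B$. One cosmetic point: your opening sentence announces a deduction from version~A (Proposition~\ref{strongccprop}), but you never actually invoke it---nor does the paper---so you should drop that framing.
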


\begin{proof}
Let $x_1,\ldots,x_{k+1}$ be a partial \sop\ on $R$.
Suppose that $v \in R$ such that $vx_{k+1} \in (x_1,\ldots,x_k)^{{\cl}_B}$. Then for each $b \in B$, $vx_{k+1}b \in (x_1,\ldots,x_k)B$. Equivalently, $x_{k+1}(vb) \in (x_1,\ldots,x_k)B$. Since $x_1,\ldots,x_{k+1}$ form part of a \sop\ on $R$, they form a regular sequence on $B$. Hence $vb \in (x_1,\ldots,x_k)B$. As we proved this for an arbitrary $b \in B$, $v \in (x_1,\ldots,x_k)^{{\cl}_B}$, as desired.
\end{proof}

\section{Smallest Closures}
\label{smallestclosures}

\subsection{Intersection Stable Properties}

Given a set $\{{\cl}_\lambda\}_{\lambda \in \Lambda}$ of closure operations, their intersection $\bigcap_{\lambda \in \lambda} {\cl}_\lambda$ is also a closure operation \cite[Construction~3.1.3]{epstein}.

\begin{defn}
Given a property P of a closure operation, we call P \textit{intersection stable} if whenever ${\cl}_\lambda$ satisfies P for every $\lambda \in \Lambda$, $\bigcap_{\lambda \in \Lambda} {\cl}_\lambda$ also satisfies P.
\end{defn}

The following lemma is immediate:

\begin{lemma}
Suppose that P is an intersection stable property of a closure operation and that $R$ has a closure operation satisfying P. Then $R$ has a smallest closure operation satisfying P.
\end{lemma}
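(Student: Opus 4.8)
The plan is to form the intersection of \emph{all} closure operations on $R$ satisfying P and verify that it is the sought-after smallest one; essentially everything is formal once the construction is set up correctly.

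First I would address the only real subtlety, which is set-theoretic: to speak of ``the intersection of all closure operations satisfying P'' we need these to form a genuine set. Fixing a set of representatives for the isomorphism classes of finitely generated $R$-modules (each such module is a quotient of some $R^n$, so representatives can be chosen from a set), a closure operation is determined by specifying, for each pair $N \subseteq M$ among these representatives, a submodule $N_M^{\cl}$ of $M$; since the submodules of a given finitely generated module form a set, the totality of such assignments is a set. Let $\{{\cl}_\lambda\}_{\lambda \in \Lambda}$ be the subcollection consisting of those closure operations that satisfy P; this is indexed by a set $\Lambda$, and $\Lambda \ne \emptyset$ by hypothesis.

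Next I would put ${\cl} := \bigcap_{\lambda \in \Lambda} {\cl}_\lambda$. By \cite[Construction~3.1.3]{epstein} this is again a closure operation on $R$, and since P is intersection stable, ${\cl}$ satisfies P. Finally, for minimality: if ${\cl}'$ is any closure operation on $R$ satisfying P, then ${\cl}' = {\cl}_\mu$ for some $\mu \in \Lambda$, so by definition of the intersection $N_M^{\cl} \subseteq N_M^{{\cl}_\mu} = N_M^{{\cl}'}$ for all finitely generated $R$-modules $N \subseteq M$; that is, ${\cl} \subseteq {\cl}'$. Hence ${\cl}$ is a closure operation satisfying P that is contained in every closure operation satisfying P, i.e., the smallest one.

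The main (and essentially only) obstacle is the set-theoretic bookkeeping in the first step — making sure we are intersecting over a bona fide set rather than a proper class — and this is dispatched by the representative-set observation above. The remaining steps are immediate from the definition of the intersection of closure operations and the hypothesis that P is intersection stable, which is why the lemma can be stated as immediate.
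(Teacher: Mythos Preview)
Your proposal is correct and is precisely the intended argument: the paper declares the lemma ``immediate,'' and the natural reading is exactly your construction of taking the intersection of all closure operations satisfying P. Your added set-theoretic care (choosing representatives so the collection is a set) goes slightly beyond what the paper spells out, but is a reasonable fleshing-out of the one-line proof rather than a different approach.
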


\begin{thm}
The \axioma\ is intersection stable. The \axiomb\ is intersection stable on sets of closures that satisfy the \axioma. When $R$ is local, the \axiomc\ and the \axiomd\ are intersection stable.
\end{thm}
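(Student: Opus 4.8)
The plan is to verify each axiom separately, treating the four axioms in turn, since each is a statement of the form ``something lies in a closure'' or ``some closure is trivial,'' and these behave predictably under intersection.

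\medskip

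\noindent\textbf{Functoriality.} Suppose each $\cl_\lambda$ satisfies the \axioma. Let $f:M\to W$, $N\subseteq M$, and $u\in N_M^{\bigcap_\lambda \cl_\lambda}=\bigcap_\lambda N_M^{\cl_\lambda}$. Then for every $\lambda$, $u\in N_M^{\cl_\lambda}$, so $f(u)\in f(N)_W^{\cl_\lambda}$; intersecting over $\lambda$ gives $f(u)\in \bigcap_\lambda f(N)_W^{\cl_\lambda}=f(N)_W^{\bigcap_\lambda \cl_\lambda}$. So the intersection satisfies the \axioma.

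\medskip

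\noindent\textbf{Semi-residuality.} Now assume in addition that every $\cl_\lambda$ satisfies the \axiomb, and write $\cl=\bigcap_\lambda\cl_\lambda$. Suppose $N_M^{\cl}=N$; we want $0_{M/N}^{\cl}=0$. The subtlety is that $N_M^{\cl}=N$ does not force $N_M^{\cl_\lambda}=N$ for each individual $\lambda$, so we cannot directly apply semi-residuality of $\cl_\lambda$. To get around this, fix $\lambda$ and set $N_\lambda:=N_M^{\cl_\lambda}$. Since $\cl_\lambda$ is idempotent, $(N_\lambda)_M^{\cl_\lambda}=N_\lambda$, so by the \axiomb\ for $\cl_\lambda$ we get $0_{M/N_\lambda}^{\cl_\lambda}=0$. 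Now let $u+N\in 0_{M/N}^{\cl}\subseteq 0_{M/N}^{\cl_\lambda}$. Using part (a) of Lemma~\ref{lemma1.2} for $\cl_\lambda$ (which applies because $\cl_\lambda$ satisfies the \axioma\ and the \axiomb), $u+N\in 0_{M/N}^{\cl_\lambda}$ with $N\subseteq N_\lambda$ implies, after passing through $M/N_\lambda$, that $u+N_\lambda \in 0_{M/N_\lambda}^{\cl_\lambda}=0$, i.e.\ $u\in N_\lambda=N_M^{\cl_\lambda}$. Since this holds for every $\lambda$, $u\in\bigcap_\lambda N_M^{\cl_\lambda}=N_M^{\cl}=N$, so $u+N=0$ in $M/N$. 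Hence $0_{M/N}^{\cl}=0$, and the intersection satisfies the \axiomb. This is the step I expect to be the main obstacle, precisely because of the mismatch between $N$ being closed for the intersection and $N$ being closed for each $\cl_\lambda$; the fix is to replace $N$ by its $\cl_\lambda$-closure and invoke Lemma~\ref{lemma1.2}(a).

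\medskip

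\noindent\textbf{Faithfulness.} Assume $R$ is local and each $\cl_\lambda$ satisfies the \axiomc, i.e.\ $m_R^{\cl_\lambda}=m$. Then $m^{\bigcap_\lambda\cl_\lambda}=\bigcap_\lambda m^{\cl_\lambda}=\bigcap_\lambda m=m$, so the intersection satisfies the \axiomc.

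\medskip

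\noindent\textbf{Generalized Colon-Capturing.} Assume $R$ is local and each $\cl_\lambda$ satisfies the \axiomd. Let $x_1,\dots,x_{k+1}$ be a partial system of parameters, $J=(x_1,\dots,x_k)$, $f:M\to R/J$ surjective, and $v\in M$ with $f(v)=x_{k+1}+J$. Let $w\in (Rv)_M^{\bigcap_\lambda\cl_\lambda}\cap\ker f$. Then for each $\lambda$, $w\in (Rv)_M^{\cl_\lambda}\cap\ker f\subseteq (Jv)_M^{\cl_\lambda}$ by the \axiomd\ for $\cl_\lambda$. Intersecting over $\lambda$ gives $w\in\bigcap_\lambda (Jv)_M^{\cl_\lambda}=(Jv)_M^{\bigcap_\lambda\cl_\lambda}$, so the intersection satisfies the \axiomd. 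Assembling the four parts completes the proof.
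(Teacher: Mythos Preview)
Your proof is correct and follows essentially the same approach as the paper for the \axioma, the \axiomc, and the \axiomd.

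For the \axiomb, your argument is correct but takes an unnecessary detour. You worry that $N_M^{\cl}=N$ does not force $N_M^{\cl_\lambda}=N$, and so introduce $N_\lambda=N_M^{\cl_\lambda}$ in order to have something $\cl_\lambda$-closed to which the \axiomb\ can be applied. The paper sidesteps this entirely: Lemma~\ref{lemma1.2}(a), which holds for any closure satisfying the \axioma\ and the \axiomb, gives the biconditional $u\in N_M^{\cl_\lambda}\iff \bar u\in 0_{M/N}^{\cl_\lambda}$ \emph{without} requiring $N$ to be $\cl_\lambda$-closed. So from $\bar u\in 0_{M/N}^{\cl}\subseteq 0_{M/N}^{\cl_\lambda}$ one reads off $u\in N_M^{\cl_\lambda}$ directly, intersects over $\lambda$, and concludes $u\in N_M^{\cl}=N$. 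Your route through $N_\lambda$ reaches the same endpoint, but the ``main obstacle'' you flag is not actually present.
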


\begin{proof}
Let $\{{\cl}_\lambda\}_{\lambda \in \Lambda}$ be a family of closure operations, and 
\[{\cl}=\bigcap_{\lambda \in \Lambda} {\cl}_\lambda.\]
If each ${\cl}_\lambda$ satisfies the \axioma, $f:M \to W$ is an $R$-module map, and $N \subseteq M$ is a submodule, then $f(N_M^{\cl}) \subseteq f(N_M^{{\cl}_\lambda}) \subseteq f(N)_W^{{\cl}_\lambda}$ for each $\lambda$.
Thus $f(N_M^{\cl}) \subseteq \bigcap_\lambda f(N)_W^{{\cl}_\lambda}=f(N)_W^{\cl}$, as desired.

Suppose that $N_M^{\cl}=N$, and that for each $\lambda$, ${\cl}_\lambda$ satisfies the \axioma\ and the \axiomb. We will show that $0_{M/N}^{\cl}=0$. Suppose that $\bar{u} \in 0_{M/N}^{\cl}$. Then for each $\lambda$, $\bar{u} \in 0_{M/N}^{{\cl}_\lambda}$.  By Lemma \ref{lemma1.2}, $u \in N_M^{{\cl}_\lambda}$ if and only if $\bar{u} \in 0_{M/N}^{{\cl}_\lambda}$. Hence $u \in N_M^{{\cl}_\lambda}$ for each $\lambda$, which implies that $u \in N_M^{\cl}=N$. Thus $\bar{u}=0$, and so cl satisfies the \axiomb.

It is clear that the \axiomc\ is intersection stable.

Suppose that ${\cl}_\lambda$ satisfies the \axiomd\ for each $\lambda$ and that $x_1,\ldots,x_{k+1}$ is part of a \sop\ for $R$, $J=(x_1,\ldots,x_k)$, and $f:M \twoheadrightarrow R/J$ such that there is some $v \in M$ with $f(v)=x_{k+1}+J$. We need to show that $(Rv)_M^{\cl} \cap \ker(f) \subseteq (Jv)_M^{\cl}$. Since $(Rv)_M^{\cl} \cap \ker(f) \subseteq (Rv)_M^{{\cl}_\lambda} \cap \ker(f) \subseteq (Jv)_M^{{\cl}_\lambda}$ for each $\lambda$, the \axiomd\ holds for cl.
\end{proof}

\begin{corollary}
If a local domain $R$ has a Dietz closure, then it has a smallest Dietz closure.
\end{corollary}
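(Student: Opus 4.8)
The plan is to assemble this directly from the two results immediately preceding it. The key observation is that the four Dietz axioms are exactly the properties appearing in the previous theorem, and that theorem asserts each of them is intersection stable — with the caveat that the Semi-residuality Axiom is only known to be intersection stable on families all of whose members satisfy the Functoriality Axiom. So the first step is to note that "being a Dietz closure" is a conjunction of properties each of which is intersection stable on the relevant class. Since the Functoriality Axiom is intersection stable outright, any intersection of Dietz closures still satisfies Functoriality; hence the whole family (and all its subfamilies) lies in the class on which Semi-residuality is intersection stable, and the remaining two axioms (Faithfulness and Generalized Colon-Capturing) are intersection stable over any local ring. Therefore the conjunction "Dietz closure" is intersection stable as a property of closure operations on a local domain.

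Second, I would invoke the lemma stating that if $P$ is an intersection stable property and $R$ has \emph{some} closure operation satisfying $P$, then $R$ has a smallest one. Here $P$ is "being a Dietz closure", and the hypothesis of the corollary is precisely that $R$ has at least one Dietz closure. Concretely, one takes $\cl = \bigcap_{\lambda \in \Lambda} \cl_\lambda$ where $\{\cl_\lambda\}_{\lambda \in \Lambda}$ is the (nonempty) collection of \emph{all} Dietz closures on $R$; by the intersection stability just established, $\cl$ is again a Dietz closure, and by construction $\cl \subseteq \cl_\lambda$ for every $\lambda$, so $\cl$ is the smallest Dietz closure on $R$. One should remark that $R$ is a domain, hence in particular local, so the local hypotheses needed for intersection stability of Faithfulness and Generalized Colon-Capturing are satisfied.

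I do not anticipate a genuine obstacle: the only point requiring care is the order in which the axioms are checked for the intersection. Semi-residuality must be verified knowing that Functoriality already holds for every closure in the family — but since we are intersecting Dietz closures, every $\cl_\lambda$ satisfies all four axioms, so this hypothesis is automatic. Thus the proof is essentially a two-line citation of the preceding theorem and lemma, and the write-up should simply make explicit that the collection of all Dietz closures is nonempty by hypothesis and closed under arbitrary intersection, so its intersection is the minimum element.
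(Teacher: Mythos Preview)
Your proposal is correct and matches the paper's approach: the corollary is stated without proof immediately after the theorem on intersection stability of the four axioms and the lemma that an intersection stable property with at least one witness yields a smallest such closure, so the intended argument is exactly the two-step citation you describe. One small wording slip: you write ``$R$ is a domain, hence in particular local,'' but being a domain does not imply being local; the hypothesis is that $R$ is a \emph{local} domain, so locality is assumed outright.
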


In the case of a \CM\ ring, the smallest Dietz closure is the trivial closure. However, we do not know what it looks like in more generality. 

\begin{remark}
Colon-capturing is a useful property for a closure operation to have, but it is not enough on its own for our purposes. For example, the closure $N_M^{\cl}=M$ captures colons, but is too large to be useful.
\end{remark}

\begin{lemma}
\label{ccintersectionstable}
Colon-capturing is an intersection stable property.
\end{lemma}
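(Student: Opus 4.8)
The plan is to unwind the definitions of colon-capturing (Definition \ref{strongcc}(1)) and of the intersection of closure operations, and check that the required containment holds termwise over the index set. Let $\{{\cl}_\lambda\}_{\lambda \in \Lambda}$ be a family of closure operations on $R$, each of which satisfies colon-capturing, and set ${\cl} = \bigcap_{\lambda \in \Lambda} {\cl}_\lambda$, so that by definition $I_M^{\cl} = \bigcap_{\lambda \in \Lambda} I_M^{{\cl}_\lambda}$ for every inclusion of \fg\ modules, and in particular for ideals of $R$. Fix a partial \sop\ $x_1, \ldots, x_{k+1}$ on $R$ and put $J = (x_1, \ldots, x_k)$.

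First I would take an arbitrary element $u \in J : x_{k+1}$, i.e. $u x_{k+1} \in J$. For each $\lambda \in \Lambda$, colon-capturing for ${\cl}_\lambda$ gives $J : x_{k+1} \subseteq J^{{\cl}_\lambda}$, hence $u \in J^{{\cl}_\lambda}$. Since this holds for every $\lambda$, we get $u \in \bigcap_{\lambda \in \Lambda} J^{{\cl}_\lambda} = J^{\cl}$. As $u$ was arbitrary, $J : x_{k+1} \subseteq J^{\cl}$, which is exactly colon-capturing for ${\cl}$. (One should note that ${\cl}$ is indeed a closure operation, which is recorded in \cite[Construction~3.1.3]{epstein} and used already above; no axioms beyond Extension/Idempotence/Order-Preservation are needed for this argument.)

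There is essentially no obstacle here: the statement is a one-line consequence of the fact that a colon ideal $J : x_{k+1}$ is a single fixed ideal, so requiring it to land in $J^{{\cl}_\lambda}$ for each $\lambda$ automatically forces it into the intersection. The only thing to be mildly careful about is that the same partial \sop\ and the same $J$ are used across all $\lambda$ simultaneously — which is fine, since colon-capturing quantifies over all partial \sops\ internally — and that the definition of the intersection closure commutes with this finite data, which it does. I do not expect to need the \axioma\ or any of the other Dietz axioms for this lemma, in contrast to the \axiomb\ case handled in the previous theorem.
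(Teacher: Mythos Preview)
Your proof is correct and is exactly the argument the paper has in mind; the paper simply states that the lemma is immediate from Definition~\ref{strongcc} without writing out the details you provide.
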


\begin{proof}
This is immediate from Definition \ref{strongcc}.
\end{proof}

\begin{lemma}
Strong colon-capturing, version A, as in Definition \ref{strongcc} is intersection stable.
\end{lemma}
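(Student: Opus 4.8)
The statement to prove is that strong colon-capturing, version A, is intersection stable. The plan is to mimic exactly the argument just given for colon-capturing (Lemma~\ref{ccintersectionstable}): unwind the definition, note that the defining containment is a statement of the form "an intersection is contained in an intersection", and conclude. Concretely, let $\{{\cl}_\lambda\}_{\lambda \in \Lambda}$ be a family of closure operations each satisfying strong colon-capturing, version A, and set ${\cl}=\bigcap_{\lambda \in \Lambda} {\cl}_\lambda$. Fix a partial \sop\ $x_1,\ldots,x_k$ on $R$ and $a<t$. I want to show
\[
(x_1^t,x_2,\ldots,x_k)^{\cl} : x_1^a \subseteq (x_1^{t-a},x_2,\ldots,x_k)^{\cl}.
\]

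First I would observe that by the very definition of the intersection closure, $(x_1^t,x_2,\ldots,x_k)^{\cl} = \bigcap_\lambda (x_1^t,x_2,\ldots,x_k)^{{\cl}_\lambda}$, and likewise for the right-hand ideal. Next, taking a colon commutes with intersections: $\left(\bigcap_\lambda I_\lambda\right) : x = \bigcap_\lambda (I_\lambda : x)$ for any ideals $I_\lambda$ and element $x$, which is an elementary and standard fact. Hence the left-hand side equals $\bigcap_\lambda \left((x_1^t,x_2,\ldots,x_k)^{{\cl}_\lambda} : x_1^a\right)$. Now for each fixed $\lambda$, the hypothesis that ${\cl}_\lambda$ satisfies strong colon-capturing, version A, gives $(x_1^t,x_2,\ldots,x_k)^{{\cl}_\lambda} : x_1^a \subseteq (x_1^{t-a},x_2,\ldots,x_k)^{{\cl}_\lambda}$. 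Intersecting these containments over all $\lambda$ yields
\[
(x_1^t,x_2,\ldots,x_k)^{\cl} : x_1^a \;=\; \bigcap_\lambda \left((x_1^t,x_2,\ldots,x_k)^{{\cl}_\lambda} : x_1^a\right) \;\subseteq\; \bigcap_\lambda (x_1^{t-a},x_2,\ldots,x_k)^{{\cl}_\lambda} \;=\; (x_1^{t-a},x_2,\ldots,x_k)^{\cl},
\]
which is the desired inclusion.

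I do not anticipate a genuine obstacle here: the only thing to be careful about is making explicit the two commutation facts (intersection closure is computed ideal-wise by definition, and colons distribute over intersections), after which the argument is purely formal — exactly parallel to the one-line proof of Lemma~\ref{ccintersectionstable}. If one wanted to be maximally terse, one could simply write "This is immediate from Definition~\ref{strongcc}, since colons commute with arbitrary intersections of ideals," mirroring the style of the preceding lemma.
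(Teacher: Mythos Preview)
Your proof is correct and follows essentially the same approach as the paper. The paper's version is marginally more economical: it uses only the inclusion $I^{\cl} \subseteq I^{{\cl}_\lambda}$ (hence $I^{\cl}:_R x_1^a \subseteq I^{{\cl}_\lambda}:_R x_1^a$ by monotonicity of the colon) rather than the equality coming from ``colons commute with intersections,'' but the argument is otherwise identical.
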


\begin{proof}
To see this, notice that if $x_1,\ldots,x_k$, $t$, and $a$ are as in the definition of strong colon-capturing, version A, then 
\[
(x_1^t,x_2,\ldots,x_k)^{\cl} :_R x_1^a 
\subseteq (x_1^t,x_2,\ldots,x_k)^{{\cl}_\lambda} :_R x_1^a 
\subseteq (x_1^{t-a},x_2,\ldots,x_k)^{{\cl}_\lambda}
\]
 for each $\lambda$. Hence $(x_1^t,x_2,\ldots,x_k)^{\cl}:_R x_1^a \subseteq (x_1^{t-a},x_2,\ldots,x_k)^{\cl}$.
\end{proof}

\begin{remark}
A similar proof works for strong colon-capturing, version B.
\end{remark}

If cl is defined on a category of rings, then we would like to find the smallest closure operation as above (if any such exist) that captures colons and also satisfies the following property: 

\begin{defn}
A closure operation satisfies \textit{persistence for change of rings} if whenever $R \to S$ is a morphism in this category, and $N \subseteq M$ are finitely generated $R$-modules, then $\im(S \otimes_R N_M^{\cl} \to S \otimes_R M) \subseteq (\im(S \otimes_R N \to S \otimes_R M))_{S \otimes_R M}^{\cl}$.
\end{defn}

\begin{remark}
\label{persistenceremark}
Tight closure satisfies both persistence for change of rings and colon-capturing when $R$ is a complete local domain \cite{smoothbasechange}. 

The trivial closure always satisfies persistence for change of rings, but captures colons if and only if $R$ is \CM.
\end{remark}

\begin{prop}
\label{persistenceintersectionstable}
Persistence for change of rings is an intersection stable property.
\end{prop}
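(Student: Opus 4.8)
The plan is to let $\{\cl_\lambda\}_{\lambda \in \Lambda}$ be a family of closure operations, each satisfying persistence for change of rings, and to set $\cl = \bigcap_{\lambda \in \Lambda} \cl_\lambda$. Fix a morphism $R \to S$ in the category and finitely generated $R$-modules $N \subseteq M$; I must show
\[\im(S \otimes_R N_M^{\cl} \to S \otimes_R M) \subseteq \bigl(\im(S \otimes_R N \to S \otimes_R M)\bigr)_{S \otimes_R M}^{\cl}.\]
First I would observe that since $\cl \subseteq \cl_\lambda$ for every $\lambda$, we have $N_M^{\cl} \subseteq N_M^{\cl_\lambda}$, hence $\im(S \otimes_R N_M^{\cl} \to S \otimes_R M) \subseteq \im(S \otimes_R N_M^{\cl_\lambda} \to S \otimes_R M)$, the inclusion holding inside the single module $S \otimes_R M$. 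By persistence for $\cl_\lambda$, this last image lies in $\bigl(\im(S \otimes_R N \to S \otimes_R M)\bigr)_{S \otimes_R M}^{\cl_\lambda}$.

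Thus $\im(S \otimes_R N_M^{\cl} \to S \otimes_R M)$ is contained in $\bigl(\im(S \otimes_R N \to S \otimes_R M)\bigr)_{S \otimes_R M}^{\cl_\lambda}$ for every $\lambda$. Intersecting over $\lambda$ and using the very definition of the intersection closure on the $S$-module $S \otimes_R M$ with submodule $\im(S \otimes_R N \to S \otimes_R M)$, I get
\[\im(S \otimes_R N_M^{\cl} \to S \otimes_R M) \subseteq \bigcap_{\lambda} \bigl(\im(S \otimes_R N \to S \otimes_R M)\bigr)_{S \otimes_R M}^{\cl_\lambda} = \bigl(\im(S \otimes_R N \to S \otimes_R M)\bigr)_{S \otimes_R M}^{\cl},\]
which is exactly the required containment.

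The only subtlety worth flagging — and I think it is the one genuinely load-bearing point rather than a real obstacle — is that the closure operations $\cl_\lambda$ and $\cl$ must be understood as defined over all rings in the category (in particular over $S$), so that $(-)^{\cl}_{S \otimes_R M}$ makes sense and the intersection of the $\cl_\lambda$ computed over $R$ agrees with the intersection computed over $S$; this is part of the standing setup for "a closure operation defined on a category of rings," so nothing extra needs to be proved. Everything else is a routine diagram chase using order-preservation and the definition of intersection, so I do not expect any technical difficulty.
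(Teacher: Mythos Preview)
Your proof is correct and follows essentially the same approach as the paper: use $N_M^{\cl} \subseteq N_M^{\cl_\lambda}$ for each $\lambda$, apply persistence of each $\cl_\lambda$, and then intersect over $\lambda$ using the definition of $\cl$ on the $S$-side. The only cosmetic difference is that the paper argues elementwise with $u \in N_M^{\cl}$ and $1 \otimes u$, whereas you phrase it in terms of the whole image; your remark about the closures being defined on all rings in the category is exactly the standing assumption the paper uses as well.
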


\begin{proof}
Suppose that ${\cl}_\lambda$ are closure operations, each defined on all rings in the category, that are persistent for change of rings. Let ${\cl}=\bigcap_{\lambda \in \Lambda} {\cl}_\lambda$. We will show that cl is persistent for change of rings. Let $R \to S$ be a morphism in the category, and suppose that $u \in N_M^{\cl}$. Our goal is to show that $1 \otimes u \in (\im(S \otimes_R N \to S \otimes_R M))^{\cl}_{S \otimes_R M}$. By definition of cl, $u \in N_M^{{\cl}_\lambda}$ for every $\lambda \in \Lambda$. Since each ${\cl}_\lambda$ is persistent with change of rings, this implies that 
\[1 \otimes u \in (\im(S \otimes_R N \to S \otimes_R M))^{{\cl}_\lambda}_{S \otimes_R M}\]
for every $\lambda \in \Lambda$.
Hence $1 \otimes u \in (\im(S \otimes_R N \to S \otimes_R M))^{\cl}_{S \otimes_R M}.$
\end{proof}

\begin{corollary}
The category of all complete local domains has a smallest persistent closure operation that captures colons.
\end{corollary}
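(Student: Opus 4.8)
The plan is to exhibit the desired smallest closure as an intersection, exactly as in the pattern used above for a single ring, but carried out over the whole category. Let $\mathcal{F}$ be the collection of closure operations defined on every complete local domain that are simultaneously persistent for change of rings and colon-capturing. For each fixed complete local domain $R$ the closure operations on $R$ form a set, so the pointwise intersection ${\cl}_0 := \bigcap_{{\cl} \in \mathcal{F}} {\cl}$ is a well-defined closure operation (no smallness issue for the category is relevant, since only the behavior on each individual $R$ matters). By Proposition~\ref{persistenceintersectionstable}, ${\cl}_0$ is persistent for change of rings, and by Lemma~\ref{ccintersectionstable} applied on each $R$, ${\cl}_0$ captures colons; hence ${\cl}_0 \in \mathcal{F}$. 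Since ${\cl}_0 \subseteq {\cl}$ for every ${\cl} \in \mathcal{F}$ by construction, ${\cl}_0$ is the smallest persistent colon-capturing closure operation, provided $\mathcal{F} \ne \varnothing$.

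So the one thing to verify is that $\mathcal{F}$ is nonempty. For this I would use the ``full'' closure ${\cl}^{\mathrm{full}}$ given on each ring by $N_M^{{\cl}^{\mathrm{full}}} = M$ for all pairs of finitely generated modules $N \subseteq M$. It is immediate that this is a closure operation (extension, idempotence, and order-preservation all hold trivially), and it captures colons because $(x_1,\ldots,x_k)^{{\cl}^{\mathrm{full}}} = R$ contains every colon ideal $(x_1,\ldots,x_k) : x_{k+1}$. It is also persistent for change of rings: for any morphism $R \to S$ and finitely generated $R$-modules $N \subseteq M$, both $\im\bigl(S \otimes_R N_M^{{\cl}^{\mathrm{full}}} \to S \otimes_R M\bigr)$ and $\bigl(\im(S \otimes_R N \to S \otimes_R M)\bigr)_{S \otimes_R M}^{{\cl}^{\mathrm{full}}}$ are equal to $S \otimes_R M$. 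Thus ${\cl}^{\mathrm{full}} \in \mathcal{F}$, and the argument of the first paragraph applies.

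I do not expect a genuine obstacle: the mathematical content is entirely contained in the two intersection-stability results (Lemma~\ref{ccintersectionstable} and Proposition~\ref{persistenceintersectionstable}) already in hand, together with the cheap observation that at least one persistent colon-capturing closure exists. The only thing to be a little careful about is phrasing, namely that the intersection defining ${\cl}_0$ is taken pointwise ring by ring, so that the fact that the category of complete local domains is a proper class causes no difficulty.
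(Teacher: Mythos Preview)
Your proof is correct and follows the same overall structure as the paper's: intersect all closures satisfying both properties, invoke Lemma~\ref{ccintersectionstable} and Proposition~\ref{persistenceintersectionstable} to show the intersection still has both properties, and check that the family being intersected is nonempty. The one difference is in the witness for nonemptiness: the paper invokes Remark~\ref{persistenceremark}, citing tight closure as a persistent colon-capturing closure on complete local domains, whereas you use the full closure $N_M^{\mathrm{cl}}=M$. Your choice is more elementary and self-contained, since it avoids appealing to the (nontrivial) persistence and colon-capturing properties of tight closure; on the other hand, the paper's choice has the virtue of pointing to a genuinely interesting member of the family rather than the vacuous one.
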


\begin{proof}
This follows immediately from Lemma \ref{ccintersectionstable}, Remark \ref{persistenceremark}, and Proposition \ref{persistenceintersectionstable}.
\end{proof}

\begin{question}
When $R$ is a complete local domain that is not \CM, what is the smallest persistent closure operation that captures colons?
\end{question}

\subsection{Smallest Big \CM\ Module Closure}

Given a big \CM\ module $B$ over a local domain $R$, we get a module closure ${\cl}_B$. In \cite{dietz}, Dietz proves that ${\cl}_B$ is a Dietz closure. We can define a new closure operation by intersecting all of these closures. Since the property of being a Dietz closure is intersection stable, this is also a Dietz closure. As we prove below, it is also a big \CM\ module closure.

\begin{prop}
\label{smallestbigcmmoduleclosure}
Let $R$ be a local domain, and let $B$ be a big \CM\ module constructed using the method of \cite{dietz}. If $B'$ is any big \CM\ $R$-module, ${\cl}_B \subseteq {\cl}_{B'}$. As a consequence, ${\cl}_B$ is the smallest big \CM\ module closure on $R$.
\end{prop}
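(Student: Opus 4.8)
The plan is to reduce the statement to a single statement about module closures that we can feed into Proposition~\ref{containment}: namely, that if $B$ is the big \CM\ module constructed from the trivial closure (or any Dietz closure) via Dietz's method, then for every big \CM\ $R$-module $B'$ we have a family of maps $B \to B'$ that collectively hit every element of $B'$. Recall how $B$ is built in \cite{dietz}: one starts with $R \to R$ and repeatedly applies \pmm s $M \to M'$ as in Definition~\ref{pmmdef}, taking a (possibly transfinite) direct limit; the key point is that each structural map in the construction is a cl-\phex, and the colimit $B$ has the property that $1 \notin mB$. So the first step is to recall this construction explicitly enough to induct along it.

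The second step is the inductive claim: at every stage $M$ of the construction of $B$, and for every big \CM\ module $B'$, there is an $R$-module map $M \to B'$ respecting the structural data (sending the distinguished image of $1$ to a prescribed element, or at least to something compatible). For the base case $M = R$ this is clear. For the inductive step, suppose we have $M \to B'$ and we form a \pmm\ $M \to M' = (M \oplus Rf_1 \oplus \cdots \oplus Rf_k)/R(u \oplus x_1 f_1 \oplus \cdots \oplus x_k f_k)$, where $x_1,\ldots,x_{k+1}$ is part of a \sop\ and $x_{k+1} u = x_1 u_1 + \cdots + x_k u_k$ in $M$. Pushing the relation forward to $B'$, we get $x_{k+1}(\text{image of }u) \in (x_1,\ldots,x_k)B'$ inside $B'$, say $x_{k+1} \bar u = x_1 b_1 + \cdots + x_k b_k$; since $x_1,\ldots,x_{k+1}$ is a regular sequence on $B'$, an easy colon-capturing argument (of the kind already carried out in the proof of Proposition~\ref{otherstrongcc}) lets us adjust and extend the map $M \to B'$ to $M' \to B'$ by sending $f_i \mapsto b_i$ — one checks the defining relation of $M'$ is killed. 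Direct limits then handle limit ordinals, giving a map $B \to B'$ for each choice of the $b_i$'s at each stage; varying those choices, one can arrange the image to contain any prescribed element of $B'$, since $B'$ is generated over $R$ by the images of the $f_i$'s together with... — actually more simply, for a fixed element $b' \in B'$, build the map so that some distinguished generator of $B$ maps to $b'$; since $B'$ is spanned over $R$ by all such images as we vary the construction and the choices, every $b' \in B'$ lies in the image of some map $B \to B'$.

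The third step is bookkeeping: apply Proposition~\ref{containment} with $S = B$, $T = B'$ to conclude ${\cl}_B \subseteq {\cl}_{B'}$. Finally, for the "as a consequence" clause: any big \CM\ module closure is of the form ${\cl}_{B'}$ for a big \CM\ module $B'$ (by definition), and we have just shown ${\cl}_B \subseteq {\cl}_{B'}$ for all such $B'$; and ${\cl}_B$ is itself a big \CM\ module closure since $B$ is a big \CM\ module. Hence ${\cl}_B$ is the smallest one.

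I expect the main obstacle to be the inductive step over the \pmm\ construction — specifically, verifying carefully that a map $M \to B'$ extends to $M' \to B'$ using that $x_1,\ldots,x_{k+1}$ is a regular sequence on $B'$, and being honest about whether one extension suffices or whether one needs to vary the choices of $b_i$ to guarantee surjectivity onto a given element; I would need to check against Dietz's construction whether the distinguished generators of the successive modifications already generate $B$ over $R$, which would let me drop the "vary the choices" subtlety entirely and just produce one canonical map $B \to B'$ hitting $1$, then handle general elements separately or observe it is unnecessary because $N_M^{{\cl}_{B'}}$ only depends on $B'$ through tensoring. The transfinite/limit-ordinal bookkeeping is routine once the successor step is nailed down.
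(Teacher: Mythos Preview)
Your proposal is correct and follows the same route as the paper: inductively extend a map $R \to B'$ across each \pmm\ using that the partial \sop\ is a regular sequence on $B'$, pass to the direct limit to get $B \to B'$, and invoke Proposition~\ref{containment}. The one place you tie yourself in knots unnecessarily is the surjectivity issue: the paper simply observes that for each $b' \in B'$ you may take the \emph{initial} map $R \to B'$ to be $1 \mapsto b'$, so the resulting map $B \to B'$ has $b'$ in its image automatically---there is no need to vary the choices of $b_i$ at later stages or to argue that the modification generators span anything in $B'$. (One small slip: the elements $b_i$ satisfying $x_{k+1}\bar u = \sum x_i b_i$ are not the ones you send $f_i$ to; the regular-sequence property gives $\bar u \in (x_1,\ldots,x_k)B'$, and it is the coefficients in \emph{that} expression that become the images of the $f_i$.)
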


\begin{proof} 
Let $B$ be a big \CM\ module constructed as above, and $B'$ an arbitrary big \CM\ module. Then for each map $R \to B'$, we can construct a map $B \to B'$ that takes the image of 1 in $B$ to the image of 1 in $B'$ via the given map $R \to B'$. To get this map, we start with the map $R \to B'$. If we already have maps from $M_0=R, M_1,\ldots,M_t$ to $B'$, we extend the map to $M_{t+1}$ as follows: 
\[M_{t+1}=(M \oplus Rf_1 \oplus \ldots \oplus Rf_k)/(u \oplus x_1f_1 \oplus \ldots x_kf_k)\]
for some $u \in M_t$ and partial \sop\ $x_1,\ldots,x_k$ for $R$ such that 
\[x_{k+1}u=x_1m_1+\ldots+x_km_k\]
is a bad relation in $M_t$. Since $B'$ is a big \CM\ module, the image of $u$ in $B'$ under the map already constructed is in $(x_1,\ldots,x_k)B'$, say $u=x_1b_1+\ldots+x_kb_k$ with $b_1,\ldots,b_k \in B'$. We extend our map $M_t \to B'$ to a map from $M_{t+1}$ to $B'$ by sending $f_i \mapsto b_i$. 
Take the direct limit of this system of maps $M_t \to B'$ as $t \to \infty$ to get the desired map $B \to B'$. Since we can start with any map $R \to B'$, every element of $B'$ is in the image of a map constructed this way. Hence Proposition \ref{containment} implies that ${\cl}_B \subseteq {\cl}_{B'}$.
\end{proof}

In certain rings of dimension 2, we know more about the smallest big \CM\ module closure. 

\begin{citeddefn}[\cite{s2ifications}]
For $R$ a local domain, the \textit{$S_2$-ification} of $R$ is the unique smallest extension of $R$ in its fraction field that satisfies Serre's condition $S_2$, if such a ring exists. When it exists, it can be constructed by adding to $R$ all elements $f \in \Frac(R)$ such that some height 2 ideal of $R$ multiplies $f$ into $R$.
\end{citeddefn}

\begin{prop}
Let $R$ be a local domain of dimension 2 that has an $S_2$-ification $S$. Then the module closure ${\cl}_S$ is the smallest big \CM\ module closure on $R$.
\end{prop}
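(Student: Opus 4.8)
The plan is to establish two facts: first, that $S$ is itself a big \CM\ $R$-module, so that ${\cl}_S$ is a big \CM\ module closure; and second, that ${\cl}_S \subseteq {\cl}_{B'}$ for \emph{every} big \CM\ $R$-module $B'$. Together these say that ${\cl}_S$ is a big \CM\ module closure that is contained in every big \CM\ module closure, hence is the smallest one.

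For the first fact I would use the structure of the $S_2$-ification. By \cite{s2ifications}, $S$ is module-finite over $R$, is again two-dimensional, and satisfies $S_2$; being module-finite over the local domain $R$ it is equidimensional, and a two-dimensional equidimensional ring satisfying $S_2$ is \CM. Now let $x_1,x_2$ be a \sop\ for $R$. Since $S$ is integral over the local ring $R$, the ideal $(x_1,x_2)S$ is primary to the Jacobson radical of $S$ (equivalently, $S/(x_1,x_2)S$ has finite length), so after localizing at any maximal ideal $\mathfrak n$ of $S$ the images of $x_1,x_2$ form a \sop\ for the two-dimensional \CM\ local ring $S_{\mathfrak n}$, hence a regular sequence on $S_{\mathfrak n}$. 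As depth is detected locally, $x_1,x_2$ is a regular sequence on $S$, and $mS \ne S$ by Nakayama's lemma. Thus $S$ is a big \CM\ module and ${\cl}_S$ is a big \CM\ module closure.

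For the second fact, fix a big \CM\ module $B'$. By Proposition~\ref{containment} it suffices, given $b' \in B'$, to construct an $R$-linear map $\phi\colon S \to B'$ with $\phi(1)=b'$, since the image of such a map contains $b'$. In dimension $2$ every $f \in S$ satisfies $If \subseteq R$ for some height-$2$, hence $m$-primary, ideal $I$, and such an $I$ contains a \sop\ $x_1,x_2$ of $R$ by prime avoidance. Writing $a_i = x_i f \in R$, we have $x_1 a_2 = x_2 a_1$ in $R$, hence $x_1(a_2 b') = x_2(a_1 b')$ in $B'$; since $x_1,x_2$ is a regular sequence on $B'$ this forces $a_1 b' \in x_1 B'$, and then there is a unique $c \in B'$ with $x_1 c = a_1 b'$ and $x_2 c = a_2 b'$. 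Set $\phi(f)=c$. This is independent of the choices: each of $x_1,x_2$, being part of a \sop\ of $R$, is a nonzerodivisor on $B'$, and if $y_1,y_2 \in I'$ is another such \sop\ (with $I'f \subseteq R$ of height $2$), then writing $a'_j = y_j f$ we get $y_j a_i = x_i a'_j$, so the resulting $d \in B'$ satisfies $x_i y_j c = y_j(x_i c) = y_j(a_i b') = x_i(a'_j b') = x_i(y_j d) = x_i y_j d$; since a product of two nonzerodivisors on $B'$ is again a nonzerodivisor, $c=d$. A similar bookkeeping (comparing $\phi(f)$, $\phi(g)$, and $\phi(f+g)$ by means of a common \sop\ contained in $I \cap I'$, which is again $m$-primary) shows that $\phi$ is additive and $R$-linear, and $\phi(1)=b'$ because $b'$ is the unique $c$ with $x_i c = x_i b'$. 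Applying Proposition~\ref{containment} yields ${\cl}_S \subseteq {\cl}_{B'}$.

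The main obstacle is the first fact: confirming that the $S_2$-ification really is a big \CM\ module. This rests on the structural properties of $S_2$-ifications in dimension $2$ — module-finiteness, equidimensionality, and $S_2$ — which I would import from \cite{s2ifications}. The remaining work, the well-definedness and $R$-linearity of $\phi$, is routine, the key point being that a product of nonzerodivisors on $B'$ is again a nonzerodivisor.
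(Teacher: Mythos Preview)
Your proof is correct and follows essentially the same route as the paper. Both arguments hinge on the same key step: given any big \CM\ module $B'$ and any element $b'\in B'$, extending the map $R\to B'$, $1\mapsto b'$, to an $R$-linear map $S\to B'$ by exploiting that a \sop\ for $R$ is a regular sequence on $B'$. The only organizational difference is that the paper first invokes Proposition~\ref{smallestbigcmmoduleclosure} to obtain a smallest big \CM\ module closure ${\cl}_B$ (so that ${\cl}_B\subseteq{\cl}_S$ is automatic once $S$ is \CM), and then proves ${\cl}_S\subseteq{\cl}_B$ via the extension argument, whereas you bypass that proposition and prove ${\cl}_S\subseteq{\cl}_{B'}$ for \emph{every} big \CM\ module $B'$ directly; you are also more explicit than the paper about well-definedness and $R$-linearity of the extended map.
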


\begin{proof}
Let $B$ be a big \CM\ module constructed by the method of \cite{dietz}, so that ${\cl}_B$ is the smallest big \CM\ module closure on $R$. Since $S$ is \CM\ when $R$ has dimension 2, we know that ${\cl}_B \subseteq {\cl}_S$. By Proposition \ref{containment}, it is enough to show that for any map $R \to B$, $1 \mapsto u$, we have a map $S \to B$ whose image contains $u$. To do this, we need to extend the map from $R$ to $S$ by defining it on elements $f \in \fracR(R)$ such that some height 2 ideal of $R$ multiplies $f$ into $R$. Let $f$ be such an element. Since $\dim(R)=2$, there is some \sop\ $x,y$ for $R$ such that $xf,yf \in R$. Then the map is already defined on $xf, yf$, say $xf \mapsto v$, $yf \mapsto w$. The element $xyf$ must map to $yv$, but also must map to $xw$, so $yv=xw$. Since $x,y$ is a regular sequence on $B$, $v=xv_0$ and $w=yw_0$ for some $v_0,w_0 \in B$. Then $xyv_0=yv=xw$, so $w=yv_0$. Hence $yv_0=yw_0$, which implies that $v_0=w_0$. Thus $f \mapsto v_0$ is a well-defined extension of the map $R \to B$. Further, $1_S$ maps to $u$, so this is the map we need to see that ${\cl}_S \subseteq {\cl}_B$.
\end{proof}

\begin{eg}
Let $R=k[[x^4,x^3y,xy^3,y^4]]$. The \stwoif\ $S$ of $R$ must contain $x^2y^2$, since $x^4(x^2y^2)=(x^3y)^2 \in R$ and $y^4(x^2y^2)=(xy^3)^2 \in R$. In fact, $S$ is the subring $k[[x^4,x^3y,x^2y^2,xy^3,y^4]]$ of $k[[x,y]]$. 
 Since $(x^3y)^2=x^4(x^2y^2)$ in $S$, $(x^3y)^2 \in (x^4)_R^{{\cl}_S}$. Similarly, $(xy^3)^2 \in (y^4)_R^{{\cl}_S}$. Hence $(x^3y)^2 \in (x^4)^{{\cl}}_R$ and $(xy^3)^2 \in (y^4)^{\cl}_R$ for every Dietz closure cl on $R$.
\end{eg}

\subsection{Smallest module closure containing another closure}

Given a closure operation cl on $R$, we can construct the smallest module closure containing cl. This will be used later on to prove that every Dietz closure is contained in a big \CM\ module closure. To construct the smallest module closure containing a given closure, we use a second type of module modification.

\begin{defn}
\label{cmm}
Let cl be a closure operation on $R$, $G \subseteq R^s$ a submodule of a \fg\ free $R$-module generated by
\[e_1=(e_{11},\ldots,e_{1s}),\ldots,e_k=(e_{k1},\ldots,e_{ks}),\]
 and let $v=(v_1,\ldots,v_s) \in G_{R^s}^{\cl} -G$. A \textit{\cmm} of an $R$-module $M$ relative to an element $x \in M$ is a map
\[M \to M'=\frac{M \oplus Rf_1 \oplus \ldots \oplus Rf_k}{R(v_1x \oplus e_{11}f_1 \oplus \ldots \oplus e_{k1}f_k,\ldots,v_sx \oplus e_{1s}f_1 \oplus \ldots \oplus e_{ks}f_k)}.\]
\end{defn}

\begin{prop}
\label{cmmdirectlimit}
Let $R$ be a ring, $W$ an $R$-module, and cl a closure operation on $R$ satisfying the \axioma\ and the \axiomb.
Then there is an $R$-module $S$ with a map $\phi:W \to S$ such that ${\cl} \subseteq {\cl}_S$, and for any $R$-module $T$ such that  ${\cl} \subseteq {\cl}_T$ and any map $\psi:W \to T$, we have a map $\gamma:S \to T$ such that $\psi=\gamma \circ \phi$.
\end{prop}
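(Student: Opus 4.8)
The goal is to build, from the $R$-module $W$ and the closure $\cl$, a universal $R$-module $S$ receiving $W$ whose module closure $\cl_S$ contains $\cl$. The strategy is a transfinite direct-limit construction, exactly parallel to Dietz's construction of a big \CM\ module from a Dietz closure, but using \emph{containment module modifications} (Definition~\ref{cmm}) instead of parameter module modifications. First I would observe what needs to be killed: if $G \subseteq R^s$ is a submodule of a \fg\ free module, $v \in G_{R^s}^{\cl}$, and we have any map $h : R^s \to T$ sending $v \mapsto x$ and the generators $e_i \mapsto$ elements of $T$, then for $\cl \subseteq \cl_T$ to have a chance we need, for every such situation, that $x$ lies in the submodule of $T$ generated by the images of the $e_i$ — because $v \in G_{R^s}^{\cl}$ forces $v \in G_{R^s}^{\cl_T}$, i.e. $1 \otimes v$ (equivalently each $t \otimes v$) lands in $\im(T \otimes G \to T \otimes R^s)$, and applying the induced map $T\otimes R^s \to T$ pairing against $h$ gives the claim. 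A containment module modification relative to $x \in M$ is designed to force precisely this: in $M'$ the element $v_1 x, \ldots, v_s x$ become the corresponding combinations of the $f_i$, so the image of $x$ in $M'$ lies in the span of the images of the $f_i$.

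The construction then proceeds by iterating. Starting from $M_0 = W$, at each step choose a triple $(G, v, x)$ consisting of a submodule $G \subseteq R^s$ of a \fg\ free module, an element $v \in G_{R^s}^{\cl} - G$, and an element $x$ of the current module $M_t$ that is the image under some map $R^s \to M_t$ sending $G$-generators appropriately but with $x$ not yet in the span of the designated elements; perform the containment module modification to get $M_{t+1}$. Since at each finite stage there are only set-many such data, and one must also re-treat data that reappear in later modules, one organizes this as a transfinite iteration: a well-ordering of all (module, triple) requirements, taking direct limits at limit ordinals, and running long enough (a suitable cardinal depending on $|R|$ and $|W|$) that the process stabilizes. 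Let $S = \varinjlim M_t$. By construction, for every \fg\ $N \subseteq M$ and every $u \in N_M^{\cl}$, the relevant containment data $(G,v,x)$ arising from a presentation of $M$ (with $G$ the span of relations and $v$ the syzygy-type vector witnessing $u \in N_M^{\cl}$, as in the passage between Lemma~\ref{lemma2.10} and $\cl_S$-membership) gets resolved somewhere along the way, so $u \in N_M^{\cl_S}$; hence $\cl \subseteq \cl_S$. That $\cl_S$ is a genuine closure operation follows because it is a module closure (Definition~\ref{moduleclosure}); the \axioma\ and the \axiomb\ for $\cl$ are what guarantee, via Lemma~\ref{lemma1.2} and the characterization of $\cl$-phantom/containment data, that the modifications can be set up consistently.

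For the universal property: given any $R$-module $T$ with $\cl \subseteq \cl_T$ and any $\psi : W \to T$, I would extend $\psi$ along the transfinite tower one step at a time. At a containment modification step producing $M_{t+1}$ from $M_t$ via data $(G, v, x)$, the hypothesis $\cl \subseteq \cl_T$ together with the image-of-$v$ argument from the first paragraph shows that the image of $x$ in $T$ (under the already-constructed map $M_t \to T$) lies in the span of the images of the designated generators $e_i$; writing that dependence relation down gives exactly the data needed to define $M_{t+1} \to T$ extending $M_t \to T$ on the new generators $f_i$. Limit stages are handled by the universal property of direct limits. Passing to the colimit yields $\gamma : S \to T$ with $\psi = \gamma \circ \phi$. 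The main obstacle is the bookkeeping of the transfinite iteration — ensuring that \emph{every} instance of $u \in N_M^{\cl}$ across \emph{all} \fg\ pairs $N \subseteq M$ (not just those visible in $W$, but all that can be ``seen'' by maps into the growing modules) eventually gets a witnessing modification, and that a cardinality bound forces termination; the per-step verifications are routine given Lemma~\ref{lemma1.2} and the module-closure formalism already in place, and the argument is structurally identical to Dietz's big \CM\ module construction with parameter modifications replaced by containment modifications.
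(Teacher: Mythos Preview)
Your approach is essentially the paper's: build $S$ by iterating containment module modifications starting from $W$ and passing to a direct limit, then extend any $\psi : W \to T$ through the tower one modification at a time using $\cl \subseteq \cl_T$. The paper organizes the limit differently---rather than a transfinite tower with cardinality bookkeeping, it indexes over the directed set of pairs $(\mathcal{G}, W_0)$ where $\mathcal{G}$ is a finite collection of data $(G_i \subseteq R^{s_i},\ v_{ij} \in (G_i)^{\cl}_{R^{s_i}} \setminus G_i)$ and $W_0$ is a \fg\ submodule of $W$; for each such pair it runs a countable tower $W_0 \to W_1 \to \cdots \to W_\infty$, shows these $W_\infty$ form a directed system, and sets $S$ equal to their limit. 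Two small corrections to your write-up: your reformulation of $v \in G_{R^s}^{\cl_T}$ via arbitrary maps $h : R^s \to T$ and a ``pairing $T \otimes R^s \to T$'' does not parse when $T$ is only a module---the actual condition is that $t \otimes v \in \im(T \otimes G \to T^s)$ for every $t \in T$, which a single modification enforces for one chosen $x$ by producing witnesses $f_i$; and the reduction from arbitrary $N \subseteq M$ to submodules $G \subseteq R^s$ is via Lemma~\ref{lemma1.2}(a) and the isomorphism $M/N \cong R^s/G$, not via Lemma~\ref{lemma2.10}, which concerns phantom extensions rather than general $\cl_S$-membership.
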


\begin{proof}
To create such an $S$, we apply \cmm s to \fg\ submodules of $W$. First, we show that we have a direct limit system of \cmm s. Given a finite set of modules $G_1,\ldots,G_t$  with $G_i \subseteq R^{s_i}$, and for each $i,$ a finite set of elements $v_{i1},v_{i2},\ldots,v_{i\ell_i} \in (G_i)_{R^{s_i}}^{\cl}-{G_i}$, we can apply finitely many \cmm s to a finitely-generated submodule $W_0 \subseteq W$ to get a module $W_1$ such that for each $1 \le i \le t$ and $1 \le j \le \ell_i$, 
\[\im(v_{ij} \otimes W_0 \to R^{s_i} \otimes W_1) \subseteq \im(G_i \otimes W_1 \to R^{s_i} \otimes W_1).\] 
Then we apply finitely many \cmm s to $W_1$, forcing 
\[\im(v_{ij} \otimes W_1 \to R^{s_i} \otimes W_2) \subseteq \im(G_i \otimes W_2 \to R^{s_i} \otimes W_2)\]
for all $i,j$.
Repeating this process infinitely many times, we get a module $W_\infty$ that is the direct limit of the $W_r$ and such that
\[\im(v_{ij} \otimes W_\infty \to R^s \otimes W_\infty) \subseteq \im(G_i \otimes W_\infty \to R^s \otimes W_\infty)\]
for all $i,j$. We have a map $W_0 \to W_\infty$ since each \cmm\ comes with a map from $W_0$.

Consider all finite sets $\cal{G}=\{G_1,\ldots,G_t,v_{11},v_{12},\ldots,v_{1\ell_1},v_{21},v_{22},\ldots,v_{t\ell_t}\}$ with $G_i \subseteq R^{s_i}$ and finitely many elements $v_{i1},\ldots,v_{i\ell_i} \in (G_i)^{\cl}_{R^s}-G_i$ for each $1 \le i \le t$, and also all \fg\ submodules $W_0$ of $W$. Suppose that $\cal{G} \subseteq \cal{G'}$ are two such sets, that $W_0 \subseteq W_0'$ are \fg\ submodules of $W$, and that $W_\infty$ and $W'_\infty$ are corresponding direct limit modules constructed from $W_0$ using $\cal{G}$ and from $W_0'$ using $\cal{G'}$, respectively. We build a map $W_\infty \to W'_\infty$, starting with the map $W_0 \subseteq W_0' \to W'_\infty$. 

It suffices to demonstrate that the map can be extended to a single \cmm. Let $P$ be an intermediate module in the direct limit system of $W_\infty$ with a map $P \to W'_\infty$, $v=v_{ij} \in \cal{G}$ for some $i,j$, $e_1,\ldots,e_k$ be the generators of $G=G_i$, and $x \in Q$ as in Definition \ref{cmm}. We need to specify the images of $f_1,\ldots,f_k$ in $W'_\infty$. Since $v \otimes W'_\infty \subseteq G \otimes W'_\infty$, $vx=e_1w_1+e_2w_2+\ldots+e_kw_k$ for some $w_1,\ldots,w_k \in W'_\infty$. Then the map that sends $f_i \mapsto w_i$ is a well-defined extension of the map $P \to W'_\infty$. Hence we have a map $W_\infty \to W'_\infty$ for any $\cal{G} \subseteq \cal{G'}$.

The $W_\infty$ form a partially ordered set via $W_\infty \le W'_\infty$ if the corresponding finite sets satisfy $\cal{G} \subseteq \cal{G'}$ and $W_0 \subseteq W_0'$. This is a directed set, using the maps $W_\infty \to W'_\infty$ we constructed above. Let $S$ be the direct limit. By the set-up above, we have a well-defined map $\phi:W \to S$. 
We are now done proving that for submodules $G$ of \fg\ free $R$-modules $R^s$, $G^{\cl}_{R^s} \subseteq G^{{\cl}_S}_{R^s}$.

Suppose that $N \subseteq M$ are arbitrary \fg\ $R$-modules. We will show that $N^{\cl}_M \subseteq N^{{\cl}_S}_M$. There is some $s$ for which $M/N \cong R^s/G$, where $G$ is a submodule of $R^s$. 
Let $u \in N_M^{\cl}$. By Lemma \ref{lemma1.2}, part (a), $\bar{u} \in 0^{\cl}_{M/N} \cong 0^{\cl}_{R^s/G}$. Applying the Lemma again, any lift $v$ of $\im(\bar{u})$ to $R^s$ is in $G^{\cl}_{R^s}$, which is contained in $G^{{\cl}_S}_{R^s}$ by the previous paragraph. Applying the Lemma twice more, we get $\bar{u} \in 0^{{\cl}_S}_{M/N}$, which implies that $u \in N_M^{{\cl}_S}$.



Now suppose that $T$ is an $R$-module such that ${\cl} \subseteq {\cl}_T$, and we have a map $\psi:W \to T$. Let $\phi:W \to S$ be as above. For any intermediate module $P$ in the direct limit system of $S$, let $\phi_P$ be the corresponding map $W \to P$. Suppose that we have a map $\gamma_P:P \to T$ such that $\psi=\gamma_P \circ \phi_P$. We demonstrate how to extend the map to a map $\gamma_{P'}:P' \to T$ such that $\psi=\gamma_{P'} \circ \phi_{P'}$ when $P'$ is a \cmm\ of $P$. We have:
\[P \to P'=\frac{P \oplus Rf_1 \oplus \ldots \oplus Rf_k}{R(v_1x \oplus e_{11}f_1 \oplus \ldots \oplus e_{k1}f_k,\ldots,v_sx \oplus e_{1s}f_1 \oplus \ldots \oplus e_{ks}f_k)},\]
where $x \in P$, and $v,e_1,\ldots,e_k$ are as in Definition \ref{cmm}. We need to specify the images of the $f_i$. Since ${\cl} \subseteq {\cl}_T$, $vx \in (e_1,\ldots,e_k)T$, say $vx=e_1t_1+\ldots+e_kt_k$. Then sending $f_i \mapsto t_i$ gives us a well-defined extension of $\gamma_P$ such that $\psi=\gamma_{P'} \circ \phi_{P'}$. Since $S$ is a direct limit of such \cmm s, we get a map $\gamma:S \to T$ such that $\psi=\gamma \circ \phi$.
\end{proof}

\begin{thm}
Let $R$ be a ring and cl a closure operation on $R$ satisfying the \axioma\ and the \axiomb. Then if we set $W=R$ and construct a module $S$ as in Proposition \ref{cmmdirectlimit}, ${\cl}_S$ is the smallest module closure containing cl, i.e., if $T$ is any $R$-module such that ${\cl} \subseteq {\cl}_T$, we have ${\cl}_S \subseteq {\cl}_T$. In particular, if cl is a module closure, then ${\cl}={\cl}_S$ (conversely, if cl is not a module closure, then ${\cl} \subsetneq {\cl}_S$).
\end{thm}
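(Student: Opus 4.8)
The plan is to combine the universal property established in Proposition~\ref{cmmdirectlimit} with Proposition~\ref{containment}. By Proposition~\ref{cmmdirectlimit} applied with $W=R$, the module $S$ we obtain satisfies ${\cl}\subseteq{\cl}_S$, and it comes equipped with a map $\phi:R\to S$; moreover, for any $R$-module $T$ with ${\cl}\subseteq{\cl}_T$ and any map $\psi:R\to T$, there is $\gamma:S\to T$ with $\psi=\gamma\circ\phi$. First I would fix such a $T$ and show ${\cl}_S\subseteq{\cl}_T$. The key point is that the images of the maps $S\to T$ obtained this way cover all of $T$: given $t\in T$, take $\psi:R\to T$ to be the map sending $1\mapsto t$; then the resulting $\gamma:S\to T$ has $t=\psi(1)=\gamma(\phi(1))$ in its image. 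So for every $t\in T$ there is a map $S\to T$ whose image contains $t$, and Proposition~\ref{containment} gives ${\cl}_S\subseteq{\cl}_T$ immediately.

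Next I would record the two consequences. Since ${\cl}_S$ itself is a module closure (being of the form ${\cl}_S$) and contains ${\cl}$, and any module closure ${\cl}_T$ containing ${\cl}$ contains ${\cl}_S$, the closure ${\cl}_S$ is by definition the smallest module closure containing ${\cl}$. In particular, if ${\cl}$ is already a module closure, say ${\cl}={\cl}_T$, then on one hand ${\cl}\subseteq{\cl}_S$ and on the other hand ${\cl}_S\subseteq{\cl}_T={\cl}$ by minimality, so ${\cl}={\cl}_S$. For the converse parenthetical: if ${\cl}\ne{\cl}_S$, then since we always have ${\cl}\subseteq{\cl}_S$, the containment is strict; and this is exactly the contrapositive of ``if ${\cl}={\cl}_S$ then ${\cl}$ is a module closure,'' which holds because ${\cl}_S$ is a module closure. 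So ${\cl}$ fails to be a module closure precisely when ${\cl}\subsetneq{\cl}_S$.

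The only subtlety to check is that Proposition~\ref{cmmdirectlimit} applies: it requires ${\cl}$ to satisfy the \axioma\ and the \axiomb, which is exactly the hypothesis of the theorem, and it requires the \cmm\ construction to be carried out over $W=R$, which is a legitimate choice of $R$-module. I expect the main (very mild) obstacle is purely bookkeeping: making sure that the element-chasing ``$t\mapsto$ the map $R\to T$ sending $1\mapsto t$'' genuinely produces, via the universal property, a map $S\to T$ containing $t$ in its image, so that the hypothesis of Proposition~\ref{containment} is verified for \emph{every} $t\in T$ and not just for generators of $T$. Since $\phi(1)$ generates the image of $\phi$ and the universal property is stated for arbitrary $\psi$, this goes through without incident. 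No new ideas beyond Propositions~\ref{cmmdirectlimit} and \ref{containment} are needed.
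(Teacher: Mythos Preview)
Your proposal is correct and follows essentially the same route as the paper: use the universal property of $S$ from Proposition~\ref{cmmdirectlimit} (with $W=R$) to produce, for each $t\in T$, a map $S\to T$ hitting $t$, and then invoke Proposition~\ref{containment}. The paper's proof is terser and omits the ``in particular'' verifications you spell out, but the argument is the same.
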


\begin{proof}
By Proposition \ref{cmmdirectlimit}, for every $R$-module map $R \to T$, we have a map $S \to T$ that agrees with the original map on the image of $R$. So for every element $t \in T$, we have a map $S \to T$ whose image contains $t$. By Proposition \ref{containment}, this implies that ${\cl}_S \subseteq {\cl}_T$.
\end{proof}

\section{A connection between Dietz closures and singularities}
\label{dietzandsing}

In this section, we show that for any local domain $R$ that has a Dietz closure, $R$ is regular if and only if all Dietz closures on $R$ are trivial. First, we prove a result on the relationship between general Dietz closures and big \CM\ module closures.

\begin{thm}
\label{dietzinbigcm}
Let cl be a Dietz closure on a local domain $(R,m)$. Then cl is contained in ${\cl}_B$ for some big \CM\ module $B$.
\end{thm}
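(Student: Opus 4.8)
The plan is to build a big \CM\ module $B$ out of $R$ by iterating module modifications, but guided by the closure cl rather than by arbitrary bad relations, so that at every stage the structure map from $R$ stays a cl-\phex. I would first invoke the machinery from Section~\ref{smallestclosures}: since cl is a Dietz closure it satisfies the \axioma\ and the \axiomb, so by Proposition~\ref{cmmdirectlimit} (applied with $W=R$) there is an $R$-module $S_1$ with a map $R \to S_1$ such that ${\cl} \subseteq {\cl}_{S_1}$. The point of passing to $S_1$ is that containment-module-modifications have been absorbed, so ${\cl}_{S_1}$ ``sees'' all the cl-closure data. I would then want to further modify $S_1$ by parameter module modifications (Definition~\ref{pmmdef}) to kill all bad parameter relations, producing in the limit a balanced big \CM\ module $B$, while checking that ${\cl} \subseteq {\cl}_{S_1} \subseteq {\cl}_B$ is preserved.

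The key steps, in order, would be: (1) Observe that $\alpha:R \to R$ (the identity) is trivially cl-phantom, hence by Lemma~\ref{lemma2.11} the image of $1$ avoids $mR$; more importantly, use Dietz's construction from \cite{dietz} but thread the closure through it. Concretely, I would alternate two operations on a module $M$ equipped with a cl-phantom map $R \to M$: first apply the $S$-construction of Proposition~\ref{cmmdirectlimit} with $W=M$ to get $M \to M'$ with ${\cl}_M$-data absorbed and ${\cl} \subseteq {\cl}_{M'}$; second, pick a bad parameter relation $x_{k+1}u = x_1u_1 + \cdots + x_ku_k$ in $M'$ and form the \pmm\ $M' \to M''$. (2) Show the composite $R \to M''$ is still cl-phantom: for the \pmm\ step this is exactly the content of Dietz's proof that the \axiomd\ guarantees \pmm s preserve cl-phantomness (the Remark after Definition~\ref{pmmdef} makes this explicit); for the containment-modification step it follows because $R \to M'$ factors through the cl-phantom $R \to M$ and $M \to M'$ is such that closure is only enlarged. (3) Iterate over all bad parameter relations in a transfinite/countable bookkeeping, take the direct limit $B$; since direct limits of regular sequences behave well, every \sop\ is a regular sequence on $B$. (4) Verify $mB \ne B$: the map $R \to B$ is a direct limit of cl-phantom maps, so by Lemma~\ref{lemma2.11} applied at each finite stage (cl satisfies the three required axioms) the image of $1$ is never in $mM$ along the system, hence $1 \notin mB$, so $mB \ne B$ and $B$ is a genuine balanced big \CM\ module. (5) Finally ${\cl} \subseteq {\cl}_B$: since each modification only enlarges the module closure ($R \to B$ factors appropriately and Proposition~\ref{containment}/Proposition~\ref{cmmdirectlimit} give ${\cl} \subseteq {\cl}_{M'}$ at the containment steps), the inclusion passes to the limit.

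The main obstacle I expect is step~(2)–(3): making sure that \emph{interleaving} containment-module-modifications with parameter-module-modifications does not destroy either property — that \pmm s still preserve cl-phantomness after we have done containment modifications (this is where the \axiomd\ is used, and one must check the hypotheses of Dietz's argument still apply to the intermediate modules, which need not be finitely generated), and conversely that the ${\cl} \subseteq {\cl}_{M'}$ containment is not spoiled by a subsequent \pmm. A secondary subtlety is the bookkeeping: one must enumerate bad relations in \emph{all} the intermediate modules so that none survives into the limit, analogous to the standard construction of big \CM\ algebras/modules; here I would cite \cite{dietz, hhpaper} for the template and only indicate the modifications needed to carry the closure along. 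Once the construction is set up correctly, the verification that $B$ is big \CM\ and that ${\cl} \subseteq {\cl}_B$ is essentially formal from Lemma~\ref{lemma2.11} and Proposition~\ref{containment}.
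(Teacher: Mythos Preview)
Your overall architecture matches the paper's: alternate containment module modifications with parameter module modifications, take a direct limit $B$, and argue that $R \to B$ keeps $1$ out of $mB$ because each finite stage is a cl-\phex. The paper does exactly this, reducing (via a short claim) to showing that any \emph{finite} sequence of modifications applied to $R$ yields a cl-phantom map $R \to W$, so Lemma~\ref{lemma2.11} applies to the finitely-generated $W$.

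The genuine gap is your step~(2) for \cmm s. You write that cl-phantomness of $R \to M'$ ``follows because $R \to M'$ factors through the cl-phantom $R \to M$ and $M \to M'$ is such that closure is only enlarged.'' This is not a valid argument: factoring through a cl-phantom map does not make the composite cl-phantom (cl-phantomness is a condition on the Ext class of the cokernel, not something inherited by postcomposition), and ``closure is only enlarged'' refers to the module closure ${\cl}_{M'}$, which is irrelevant --- phantomness here is with respect to the fixed Dietz closure cl, not ${\cl}_{M'}$. In the paper this is the technical heart of the proof and occupies a separate Lemma: one writes down explicit free presentations of $M'$ and its cokernel $Q'$ using Notation~\ref{notation1}, identifies the matrix of $\nu_1'$ as a block matrix built from $\nu_1$ together with the data $v \in G^{\cl}_{R^s}$, and then uses Lemma~\ref{lemma2.10} plus the direct-sum and sum properties of Lemma~\ref{lemma1.2} to show the top row of $\nu_1'$ lies in the cl-closure of the span of the remaining rows. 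One also has to verify separately that $R \to M'$ is injective (done by base-changing to $\Frac(R)$ and using that $v$ is torsion modulo $G$). None of this is automatic from the universal property of the \cmm, and your proposal does not supply it. You correctly flag this area as the main obstacle, but the specific justification you offer would not survive.

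A smaller point: applying Proposition~\ref{cmmdirectlimit} with $W=M$ produces an $M'$ that is typically not finitely generated, so Lemma~\ref{lemma2.11} cannot be invoked there directly. The paper avoids this by passing to the limit $B$ first and then, if $1 \in mB$, extracting a finitely-generated $W$ obtained from $R$ by finitely many modifications of either type; it is for this $W$ that one proves $R \to W$ is cl-phantom.
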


\begin{proof}
Let cl be a Dietz closure on $R$. To construct $B$, we use both \pmm s and \cmm s. First, we construct a big \CM\ module $S_1$ using \pmm s as in \cite{dietz}. We apply \cmm s to $S_1$ as in Proposition \ref{cmmdirectlimit} to get a module $S_2$ such that ${\cl} \subseteq {\cl}_{S_2}$ and a map $S_1 \to S_2$, and then we use \pmm s to construct an $R$-module $S_3$ such that every system of parameters on $R$ is a regular sequence on $S_3$ and a map $S_2 \to S_3$.
We repeat these two constructions countably many times, getting maps
\[R=S_0 \to S_1 \to S_2 \to S_3 \to \ldots \] 
The direct limit $B$ is an $R$-module such that ${\cl} \subseteq {\cl}_B$ and every \sop\ on $R$ is a regular sequence on $B$. We need to show that $\im(1) \not\in mB$ when we apply the map $R \to B$ that is the direct limit of the maps $R \to S_i$.

We follow the proof of \cite[Proposition 3.7]{bigcmalgapps}. If $\im(1) \in mB$, then there is a \fg\ $R$-module $P$ with $1 \in mP$ such that $P$ maps to $B$. 

Claim: There is an $R$-module $W$ constructed from $R$ by taking finitely many module modifications (of either or both types) such that the map $P \to B$ passes through $W$.

\begin{proof}[Proof of Claim] Given any \fg\ $R$-module $P$ with a map $P \to B$, there is some $i>0$ for which $\im(P) \subseteq S_i$. Then there is also a finite sequence of \cmm s and \pmm s of $S_{i-1}$ giving a module $W_{i-1}$ such that the map $P \to B$ passes through $W_{i-1}$. We use induction on the value of $i$. 
If $i=1$, then the result is immediate. Suppose the result holds for $i=1,2,\ldots,k-1$, and let $S$ be a module gotten from $S_{k-1}$ by applying a finite sequence of module modifications, such that $\im(P) \subseteq S$. By induction, there is an $R$-module $W_{k-1}$ that is constructed from $R$ by taking finitely many module modifications, and such that $\im(P \cap S_{k-1}) \subseteq W_{k-1}$. Any element of $P$ not in $S_{k-1}$ must come from one of the module modifications applied to $S_{i-1}$ to get $S$. So when we apply the same sequence of module modifications to $W_{k-1}$, we get an $R$-module $W_k$ that is constructed by applying finitely many module modifications to $R$ and such that $\im(P) \subseteq W_k$.
\end{proof}

Further, if we apply any finite sequence of module modifications to $R$ to get a module $W$, we have a map $W \to B$, constructed in the same way as the maps $W_\infty \to W'_\infty$ in the proof of Proposition \ref{cmmdirectlimit} and the maps $M_t \to B'$ in the proof of Proposition \ref{smallestbigcmmoduleclosure}. Therefore, $\im(1) \in mB$ if and only if $\im(1) \in mW$, where $W$ is an $R$-module obtained by applying finitely many module modifications to $R$. We will show that we cannot have $\im(1) \in mW$. To do this, we show that if we have a cl-phantom map $R \to M$, and we apply a single module modification to $M$ to get $M'$, the resulting map $R \to M'$ is cl-phantom. Hence $\im(1) \not\in mM'$.

Assume $\alpha:R \to M$ is a \phex\ of $R$. If we apply a \pmm\ to $M$, we know that the resulting map $\alpha':R \to M'$ is phantom by \cite{dietz}. In the following Lemma, we show that $\alpha':R \to M'$ is phantom when we apply a \cmm\ to $M$. Hence by Lemma \ref{lemma2.11}, $\alpha'(1) \not\in mM'$. This guarantees that in the limit, $mB \ne B$.
\end{proof}

\begin{lemma}
Suppose that $(R,m)$ is a local domain and cl is a Dietz closure on $R$ that satisfies the \axioma\ and the \axiomb, and such that $0^{\cl}_R=0$. Suppose that $\alpha:R \to M$ is a cl-\phex, and let $M'$ be a \cmm\ of $M$. Then $\alpha':R \to M'$ is a cl-\phex.
\end{lemma}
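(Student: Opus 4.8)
The plan is to use the characterization of cl-phantom maps from Lemma~\ref{lemma2.10}: $\alpha:R\to M$ is a cl-\phex\ iff, writing down the presentations from Notation~\ref{notation1}, the vector $(b_{11},\dots,b_{1m})^{\tr}$ lies in $B_{R^m}^{\cl}$, where $B$ is the $R$-span of the vectors $(b_{i1},\dots,b_{im})^{\tr}$ for $2\le i\le n$. So first I would fix such a presentation for $M$ and $\alpha$, and then explicitly write down a presentation for the \cmm\ $M'=(M\oplus Rf_1\oplus\cdots\oplus Rf_k)/R(v_1x\oplus e_{11}f_1\oplus\cdots,\;\dots,\;v_sx\oplus e_{1s}f_1\oplus\cdots)$, where $x\in M$, $v=(v_1,\dots,v_s)\in G_{R^s}^{\cl}-G$, and $G=\langle e_1,\dots,e_k\rangle\subseteq R^s$. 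The generators of $M'$ are the images of $e_1=\alpha(1),e_2,\dots,e_n$ together with $f_1,\dots,f_k$, and the relations of $M'$ are: the old relations of $M$ (padded with zeros in the $f$-coordinates), plus the $s$ new relations coming from the defining denominator. Crucially, $e_1=\alpha(1)$ is still the first generator and still injects (one must check $R\to M'$ is injective — since $R$ is a domain this follows from $0_{M'}^{\cl}=0$, which holds by Lemma~\ref{lemma1.2}(g) once we know $M'$ is torsion-free, or more directly one tracks that $\alpha(1)$ generates a free summand's worth of the relation matrix; in any case this is the sort of step that needs a sentence).

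The key computation is to identify the matrix $\nu_1'$ (in the notation of Notation~\ref{notation1}) for $M'$ and read off its top row. The top row $b_1'$ of the old part is just $(b_{11},\dots,b_{1m})$ padded by zeros in the new columns; among the $s$ new relations, the $j$th one reads $v_j e_1 + (\text{terms in }e_2,\dots,e_n?) + \sum_i e_{ij} f_i = 0$ — but $x\in M$ need not be $e_1$, so $x=\sum c_\ell e_\ell$ and the coefficient of $e_1$ in the $j$th new relation is $v_j c_1$. The point is that the new relations contribute, to the "$B'$" submodule (the span of rows $2,\dots,n$ of the full matrix over all columns — wait, more precisely $B'$ is the span of the non-top rows), new vectors, while the top row $b_1'$ has its new-column entries equal to $v_jc_1$. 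The strategy is then: the old top row $(b_{11},\dots,b_{1m})$ is in $B_{R^m}^{\cl}$ by hypothesis; I need the enlarged top row to lie in the enlarged $B'$. Using that $v\in G_{R^s}^{\cl}$, i.e. $v$ is cl-in the span of $e_1,\dots,e_k$, together with functoriality and the fact that cl commutes with the relevant operations (Lemma~\ref{lemma1.2}(b),(f)), one shows the extra coordinates $v_jc_1$ are "absorbed" — the relation $v=\sum(\text{coeff})e_i$ modulo $G$ being a cl-relation is exactly what lets us write the new part of the top row as a cl-combination of the new rows. I would set this up as: apply Lemma~\ref{lemma2.10} in reverse, needing to verify a containment of vectors, and split that containment into the old columns (handled by hypothesis) and the new columns (handled by $v\in G^{\cl}_{R^s}$ via functoriality).

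The main obstacle I anticipate is bookkeeping: correctly assembling the presentation matrix $\nu_1'$ of $M'$ so that the rows are indexed compatibly with Notation~\ref{notation1} (first generator $=\alpha(1)$, remaining generators surjecting onto the cokernel $Q'$), and then checking that the condition "$v\in G^{\cl}_{R^s}$" translates under $\Hom(-,R)$/transpose into precisely the statement that the new block of the top row lies in the cl-closure of the new block of the remaining rows. There is a genuine subtlety in that Lemma~\ref{lemma2.10}'s $B$ is a span inside $R^m$ (the source of $\nu$), so enlarging $m$ to $m+s$ and $n$ to $n+k$ means I must track how the old closure statement $(b_{11},\dots,b_{1m})^{\tr}\in B^{\cl}_{R^m}$ promotes to $R^{m+s}$; this uses part (f) of Lemma~\ref{lemma1.2} ($N^{\cl}_{N'}\subseteq N^{\cl}_M$) and the direct-sum compatibility of part (b). Once the two blocks are handled separately and recombined using that $B'^{\cl}_{R^{m+s}}$ contains the sum of the two pieces' closures (Lemma~\ref{lemma1.2}(e), or just order-preservation), the conclusion $\alpha'(1)\notin mM'$ follows from Lemma~\ref{lemma2.11}, since cl satisfies the three axioms by hypothesis.
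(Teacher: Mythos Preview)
Your overall strategy matches the paper's: reduce to Lemma~\ref{lemma2.10}, build an explicit presentation of $M'$ extending that of $M$, and verify the closure containment for the top row using $v \in G^{\cl}_{R^s}$ together with the phantom hypothesis on $\alpha$. The bookkeeping is cleaner in the paper because the generating set is chosen with $w_n = x$ (rather than writing $x = \sum c_\ell e_\ell$ as you do); this forces the new-column entries of the top row of $\nu_1'$ to be zero, so the block structure is simply $\nu_1$ augmented by a column block with $v$ in the last row and $e_i$'s below. Your version with arbitrary $c_\ell$ still works, but you should adopt this simplification.

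There is a genuine gap in your treatment of the injectivity of $\alpha': R \to M'$, which is required before Lemma~\ref{lemma2.10} even applies. You suggest invoking Lemma~\ref{lemma1.2}(g), but that needs $M'$ torsion-free, which is neither assumed nor true in general (already $M$ need not be torsion-free). Your alternative about ``$\alpha(1)$ generating a free summand's worth of the relation matrix'' is not an argument. The paper spends a full paragraph here, and this is exactly where the hypothesis $0^{\cl}_R = 0$ is used: since $v \in G^{\cl}_{R^s}$, the image $\bar v \in R^s/G$ lies in $0^{\cl}_{R^s/G}$, and by Lemma~\ref{lemma1.2}(g) applied to the torsion-free quotient of $R^s/G$ one deduces that $\bar v$ is torsion. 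Hence $v \in \im(F \otimes G \to F^s)$ for $F = \Frac(R)$, so the relations defining $M'$ become redundant over $F$, giving a retraction $F \otimes M' \to F \otimes M$. Composing, $F \to F \otimes M \to F \otimes M'$ is injective, and since $F$ is flat this forces $R \to M'$ injective. This is not ``a sentence.''

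Minor point: the conclusion of the lemma is that $\alpha'$ is a cl-\phex, not that $\alpha'(1) \notin mM'$; the latter is the consequence drawn via Lemma~\ref{lemma2.11} inside the proof of Theorem~\ref{dietzinbigcm}.
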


\begin{proof}
 Let $v=(v_1,\ldots,v_s) \in G_{R^s}^{\cl}-G$ for some nonzero submodule $G \subseteq R^s$ (as $0^{\cl}_{R^s}=0$ by assumption), and let $x \in M$. Let $u$ be the image of 1 in $M$. Taking a single module modification, we get 
\[ M'=\frac{M \oplus Rf_1 \oplus \ldots \oplus Rf_k}{R\left(v_1x \oplus e_{11}f_1 \oplus \ldots \oplus e_{k1}f_k, \ldots, v_sx \oplus e_{1s}f_1 \oplus \ldots \oplus e_{ks}f_k\right)}.\]

First, we need to show that the composite map $\alpha':R \to M \to M'$ is injective. Let $F=\Frac(R)$. Then $F \to F \otimes_R M$ is injective, and it suffices to show that $F \to F \otimes M'$ is injective, i.e. that it is nonzero (if $R \to M'$ were not injective, applying $F \otimes$ would preserve this). We claim that $v \in \im(F \otimes G \to F^s)$. To see that this is true, notice that by Lemma \ref{lemma1.2}, $0^{\cl}_{R^s/G}$ is contained in the torsion part of $R^s/G$. Hence $v \in G_{R^s}^{\cl}$ implies that $\bar{v}$ is a torsion element of $R^s/G$. Hence $\bar{v}=0$ in $F^s/(F \otimes G)$, which implies that $v \in \im(F \otimes G \to F^s)$. Then the relations we kill to get $F \otimes M'$ already hold in $F \otimes M$, so there is a retraction $F \otimes M' \to F \otimes M$. This implies that $F \otimes M \to F \otimes M'$ is injective, and so $F \to F \otimes M'$ is injective, as desired.

\begin{remark}
In the special case $s=1$, we can show that the map $M \to M'$ sending each element $y \mapsto y\oplus 0 \oplus \ldots \oplus 0$ is injective. If $y \mapsto 0$, then $y \oplus 0 \oplus \ldots \oplus 0=r(vx \oplus r_1f_1 \oplus \ldots \oplus r_kf_k)$ in $M \oplus Rf_1 \oplus \ldots \oplus Rf_k$, for some $r \in R$. We may assume without loss of generality that some $r_i$ is nonzero, say $r_1$. Then $rr_1f_1=0$, so $rr_1=0$. Since $R$ is a domain, $r=0$. So $y=rvx=0$.
\end{remark}

Following Notation \ref{notation1} and \cite[Discussion~2.4]{dietz}, pick a generating set $w_1,\ldots,w_n$ for $M$ such that $w_1=u$ and $w_n=x$. Then the images of $w_2,\ldots,w_n$ form a generating set for $Q$. Let
\[\begin{CD}
R^m @>{\nu}>> R^{n-1} @>{\mu}>> Q @>>> 0
\end{CD}\]
be a free presentation of $Q$, where $\mu$ sends the generators of $R^{n-1}$ to $w_2,\ldots,w_n$, respectively. We can choose a basis for $R^m$ such that $\nu$ is given by the $(n-1) \times m$ matrix $(b_{ij})_{2 \le i \le n, 1 \le j \le m}$. As in \cite{dietz}, we construct the diagram
\[\begin{CD}
R^m @>{\nu_1}>> R^n @>{\mu_1}>> M @>>> 0 \\
@VV{\id}V @VV{\pi}V @VVV \\
R^m @>{\nu}>> R^{n-1} @>{\mu}>> Q @>>> 0, \\
\end{CD}\]
where $\pi$ kills the first generator of $R^n$ and the rows are exact. The map $\mu_1$ sends the generators of $R^n$ to $w_1,\ldots,w_n$, respectively, and $\nu_1$ has matrix $(b_{ij})_{1 \le i \le n, 1 \le j \le m}$ with respect to the same basis for $R^m$ used to give $\nu$.

Now we construct corresponding resolutions for $M'$ and $Q'$. $M'$ has $k$ new generators and $s$ new relations, as does $Q'$, so we get the following diagram:

\[\begin{CD}
R^{m+s} @>{\nu'_1}>> R^{n+k} @>{\mu'_1}>> M' @>>> 0 \\
@VV{\id}V @VV{\pi}V @VVV \\
R^{m+s} @>{\nu'}>> R^{n-1+k} @>{\mu'}>> Q' @>>> 0 \\
\end{CD}\]

The maps $\mu'$ and $\mu_1'$ take the generators of $Q'$ and $M'$ to $\overline{w_2},\ldots,\overline{w_n},f_1,\ldots,f_k$ and $w_1,\ldots,w_n,f_1,\ldots,f_k$, respectively. The map $\pi$ kills the first generator of $R^{n+k}$. The map $\nu'_1$ can be given by the matrix
\[
\left(
\begin{array}{c | c}
 & 0 \\
\mbox{\Huge $\nu_1$} & \vdots \\
 & 0 \\
  & v \\
\hline \\
 & e_1 \\
\mbox{\Huge $0$} & \vdots \\
 & e_k \\
\end{array}
\right),
\]

and $\nu'$ is this matrix with the top row removed.

The rows of this diagram are exact. We demonstrate the exactness at $R^{n+k}$. To see that $\mu'_1 \circ \nu'_1=0$, we observe that $\mu_1 \circ \nu_1=0$, and for all $i$, $v_ix+e_{1i}f_1+\ldots+e_{ki}f_k=0$ in $M'$. 
To see that $\ker(\mu'_1) \subseteq \im(\nu'_1)$, suppose that $\mu'_1(a_1,\ldots,a_{n+k})^{\tr}=0$. Then 
\[\begin{aligned}
a_1w_1+\ldots+a_nw_n+a_{n+1}f_1+\ldots+a_{n+k}f_{k}&=r_1(v_1x+e_{11}f_1+\ldots+e_{k1}f_k) \\ &+r_2(v_2x+e_{12}f_1+\ldots+e_{k2}f_k) \\
+\ldots&+r_s(v_sx+e_{1s}f_1+\ldots+e_{ks}f_k) \\
\end{aligned}\]
 in $M \oplus Rf_1 \oplus \ldots \oplus Rf_k$, for some $r_1,\ldots,r_s \in R$. So 
 \[a_1w_1+\ldots+a_{n-1}w_{n-1}+(a_n-\sum_{i=1}^sr_sv)x=0,\]
  and 
  \[a_{n+i}=\sum_{j=1}^s r_je_{ij}\]
   for $1 \le i \le k$. This implies that $(a_1,\ldots,a_{n-1},a_n-\sum_{i=1}^sr_sv,0,\ldots,0)^{\tr}$ is in the image of the first $m$ columns of $\nu'_1$, and $(0,\ldots,0,\sum_{i=1}^sr_sv,a_{n+1},\ldots,a_{n_k})^{\tr} $ is in the image of the last $s$ columns of $\nu'_1$. Hence $(a_1,\ldots,a_{n+k})^{\tr}$ is in the image of $\nu'_1$, as desired.

By Lemma \ref{lemma2.10}, $\alpha'$ is \ph\  if and only if the top row of $\nu'_1$ is in the cl-closure of the span of the other rows. Denote the top row of $\nu_1$ by $\bm{x}$, the bottom row by $\bm{y}$, and the span of the middle rows by $H$.
Then $\alpha'$ is \ph\  if and only if 
\[\bm{x} \oplus 0 \in (R(\bm{y} \oplus v) + (H \oplus \bm{0}) + (\bm{0} \oplus G))_{R^{m+s}}^{\cl}.\]
But since $\alpha$ is phantom, $\bm{x} \in (R\bm{y}+H)_{R^m}^{\cl}$. Hence $\bm{x} \oplus \bm{0} \in (R\bm{y}+H)_{R^{m}}^{\cl} \oplus \bm{0}$, and we have
\[\begin{aligned}
(R\bm{y}+H)_{R^{m}}^{\cl} \oplus \bm{0} &=(R\bm{y}+H)_{R^m}^{\cl} \oplus 0_{R^s}^{\cl} \\
&=((R\bm{y}+H) \oplus \bm{0})_{R^{m+s}}^{\cl} \\
&=((R\bm{y} \oplus \bm{0})+(H \oplus \bm{0}))_{R^{m+s}}^{\cl} \\ 
\end{aligned}\]
We want to show that this is contained in $(R(\bm{y} \oplus v)+(H \oplus \bm{0})+(\bm{0} \oplus G))_{R^{m+s}}^{\cl}$. We have
\[\begin{aligned}
 (R\bm{y} \oplus 0)+(H \oplus \bm{0})  &\subseteq R(\bm{y} \oplus v)+(H \oplus \bm{0})+(\bm{0} \oplus G^{\cl}_{R^s}) \\
&=(R\bm{y} \oplus v)+(H \oplus \bm{0})+((\bm{0} \oplus G))_{R^{m+s}}^{\cl} \\
&\subseteq (R(\bm{y} \oplus v)+(H \oplus \bm{0}))^{\cl}_{R^{m+s}}+(\bm{0} \oplus G)_{R^{m+s}}^{\cl}. \\
\end{aligned}\]
Thus 
\[\begin{aligned}
((R\bm{y} \oplus 0)+(H \oplus \bm{0}))_{R^{m+s}}^{\cl} &\subseteq \left(\left(R(\bm{y} \oplus v)+(H \oplus \bm{0})\right)^{\cl}_{R^{m+s}}+(\bm{0} \oplus G)_{R^{m+s}}^{\cl}\right)_{R^{m+s}}^{\cl} \\
&=(R(\bm{y} \oplus v)+(H \oplus \bm{0}) + (\bm{0} \oplus G))^{\cl}_{R^{m+s}}
\end{aligned}\]
by Lemma \ref{lemma1.2}.
Therefore, $\alpha'$ is phantom.
\end{proof}

It turns out that the closure operation ${\cl}_B$ from Theorem \ref{dietzinbigcm} is the smallest big \CM-module closure containing cl, the initial Dietz closure.

\begin{lemma}
\label{smallestbigcmcontainingdietz}
Let notation be as in Theorem \ref{dietzinbigcm}. Given a big \CM-module $B'$ such that ${\cl} \subseteq {\cl}_{B'}$, ${\cl}_B \subseteq {\cl}_{B'}$.
\end{lemma}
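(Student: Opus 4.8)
The goal is to show that the big Cohen–Macaulay module $B$ constructed in Theorem \ref{dietzinbigcm} is the \emph{smallest} big \CM\ module closure containing cl. The strategy mirrors the proof of Proposition \ref{smallestbigcmmoduleclosure}: given any big \CM\ module $B'$ with ${\cl} \subseteq {\cl}_{B'}$, it suffices by Proposition \ref{containment} to show that for every map $R \to B'$ (equivalently, for every element of $B'$, since every $b' \in B'$ is the image of $1$ under some map $R \to B'$) there is a map $B \to B'$ factoring it. We then get ${\cl}_B \subseteq {\cl}_{B'}$.

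To build the map $B \to B'$, recall that $B$ is the direct limit of a chain $R = S_0 \to S_1 \to S_2 \to \cdots$ in which each $S_{i+1}$ is obtained from $S_i$ by applying countably many module modifications of one of the two types — \pmm s (from Definition \ref{pmmdef}) or \cmm s (from Definition \ref{cmm}). It therefore suffices to show that a map $S_i \to B'$ extends across a single module modification of either type. I would handle the two cases separately. For a \pmm, this is exactly the extension argument already carried out in the proof of Proposition \ref{smallestbigcmmoduleclosure}: the bad relation $x_{k+1}u = x_1 m_1 + \cdots + x_k m_k$ maps into $B'$, and since $B'$ is big \CM, the image of $u$ lies in $(x_1,\ldots,x_k)B'$, say $u = x_1 b_1 + \cdots + x_k b_k$; sending $f_i \mapsto b_i$ extends the map. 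For a \cmm\ relative to $v \in G_{R^s}^{\cl} - G$, we invoke the hypothesis ${\cl} \subseteq {\cl}_{B'}$: this is precisely the step in the proof of Proposition \ref{cmmdirectlimit} where the map $\gamma_P$ is extended to $\gamma_{P'}$ — since $v \in G_{R^s}^{\cl} \subseteq G_{R^s}^{{\cl}_{B'}}$, we get $v x = e_1 t_1 + \cdots + e_k t_k$ in $R^s \otimes B'$ (working componentwise, $v_j x = \sum_i e_{ij} t_i$ for each $j$), and sending $f_i \mapsto t_i$ gives the required extension.

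Assembling these: start with an arbitrary map $R = S_0 \to B'$, extend it step by step across each module modification in the transfinite/countable construction — using the big \CM\ property of $B'$ for \pmm-steps and the hypothesis ${\cl} \subseteq {\cl}_{B'}$ for \cmm-steps — and take the direct limit to obtain a map $B \to B'$ that restricts to the chosen map on $R$. Hence every element of $B'$ is in the image of some map $B \to B'$, and Proposition \ref{containment} gives ${\cl}_B \subseteq {\cl}_{B'}$.

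The main obstacle is bookkeeping rather than a genuine mathematical difficulty: one must be careful that the extensions across the infinitely many module modifications are compatible and pass to the direct limit, and that at each stage the module being modified really is an intermediate object in the construction of $B$ (so that the hypotheses — $x_1,\ldots,x_{k+1}$ part of a \sop, $v \in G^{\cl}_{R^s}$ computed in $R$ — genuinely apply). Both points were already addressed in the proofs of Propositions \ref{cmmdirectlimit} and \ref{smallestbigcmmoduleclosure}, so the proof here should essentially cite those arguments and observe that the two kinds of extension steps can be interleaved.
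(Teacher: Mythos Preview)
Your proposal is correct and follows essentially the same approach as the paper: construct a map $B \to B'$ extending any given $R \to B'$ by extending across \pmm s via the big \CM\ property of $B'$ (as in Proposition \ref{smallestbigcmmoduleclosure}) and across \cmm s via the hypothesis ${\cl} \subseteq {\cl}_{B'}$ (as in Proposition \ref{cmmdirectlimit}), then apply Proposition \ref{containment}. The paper's proof is simply a condensed version of what you outlined, writing out the \cmm\ extension explicitly rather than citing Proposition \ref{cmmdirectlimit}.
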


\begin{proof}
For any map $R \to B'$, we construct a map $B \to B'$. We already know from the proof of Proposition \ref{smallestbigcmmoduleclosure} how to extend the map $M \to B'$ to a map $M' \to B'$, where $M'$ is a \pmm\ of $M$. We need to know how to extend the map when 
\[M'= \frac{M \oplus Rf_1 \oplus \ldots \oplus Rf_k}{R\left(v_1x \oplus e_{11}f_1 \oplus \ldots \oplus e_{k1}f_k, \ldots, v_sx \oplus e_{1s}f_1 \oplus \ldots \oplus e_{ks}f_k\right)}\]
is a \cmm\ of $M$.
Since $(v_1,\ldots,v_s) \in G_{R^s}^{\cl}$, for each $b' \in B'$, $(v_1,\ldots,v_s) \otimes b' \in \im(G \otimes B' \to R^s \otimes B')$. In particular, for each $1 \le i \le s$, $v_ix=e_{1i}b_1+e_{2i}b_2+\ldots+e_{ki}b_k$ where $b_1,\ldots,b_k \in B'$. Define the map $M' \to B'$ by sending $f_i \mapsto b_i$.

Now for every map $R \to B'$ sending $1 \mapsto u$, we have a map $B \to B'$ whose image contains $u$. So by Proposition \ref{containment}, ${\cl}_B \subseteq {\cl}_{B'}$.
\end{proof}

\begin{question}
\begin{enumerate}
\item Are all Dietz closures big \CM\ module closures, or any kind of module closure? If not, is there a nice way of characterizing the difference between Dietz closures that are big \CM\ module closures and those that are not?
\item If we use only \cmm s as in Proposition \ref{cmmdirectlimit}, are there useful hypotheses that guarantee that the constructed module $S$ is a big \CM\ module?
\end{enumerate}
\end{question}

We use the following definition in our proof that Dietz closures are trivial on regular rings.

\begin{defn}
Given a closure operation cl, a ring $R$ is \textit{weakly cl-regular} if for $N \subseteq M$ finitely generated $R$-modules, $N^{\cl}_M=N$.
\end{defn}

\begin{remark}
It is equivalent to say that $I^{\cl}_R=I$ for all ideals $I$ of $R$. This follows from an argument in \cite{tightclosure}.
\end{remark}

\begin{prop}
\label{trivialonregular}
Let cl be a closure operation on a regular local ring $(R,m)$ that satisfies
\begin{enumerate}
\item strong colon-capturing, version A,
\item $m^{\cl}=m$, and
\item if $N' \subseteq N \subseteq M$ are finitely-generated $R$-modules, then $(N')_N^{\cl} \subseteq (N')_M^{\cl}$.
\end{enumerate}
Then $R$ is weakly cl-regular.
\end{prop}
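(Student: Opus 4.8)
The plan is to reduce the claim --- that a regular local ring $(R,m)$ satisfying the three listed hypotheses is weakly cl-regular --- to showing $I^{\cl}_R = I$ for every ideal $I$, which by the remark just above suffices. The natural induction is on $\dim R$, using that a regular local ring is a domain and that quotients by a regular parameter are again regular.

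First I would handle the base case $\dim R = 0$: then $R$ is a field, the only proper ideal is $0 = m$, and hypothesis (2) gives $m^{\cl} = m$, i.e. $0^{\cl}_R = 0$; together with hypothesis (3) and Lemma~\ref{lemma1.2}(g)-style reasoning (or directly) one gets $0^{\cl}_M = 0$ for all torsion-free, hence all, modules over a field, so the claim is immediate. For the inductive step, assume the result for regular local rings of smaller dimension, let $x_1, \ldots, x_d$ be a regular system of parameters, and let $I \subsetneq R$ be an ideal; I want $I^{\cl}_R = I$. The key move is to pass to $\bar R = R/x_1 R$, which is regular of dimension $d-1$, and use that hypothesis (1), strong colon-capturing version A, lets us ``divide by $x_1$.'' Concretely: given $u \in I^{\cl}_R$, I would like to show $u \in I$. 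Pick $x_1$ a minimal generator of $m$ not lying in any minimal prime of $I$ over which $u$ could fail to be in $I$ --- more precisely, use prime avoidance to choose $x_1$ part of a regular system of parameters with $x_1$ a nonzerodivisor on $R/I$ (possible as long as $I \ne m$ and $\operatorname{depth} R/I > 0$; the case $I = m$ is hypothesis (2), and one reduces the general case to $m$-primary $I$ by intersecting, or handles $\operatorname{depth} 0$ separately using that $R$ is \CM\ so $\sqrt I = m$ forces $I$ to be $m$-primary and one induces on colength). Then in $\bar R$, the image $\bar u$ lies in $\bar I^{\cl}$ --- here I need that cl is compatible with the quotient $R \to \bar R$, which is where hypothesis (1) strong colon-capturing comes in rather than a blunt persistence axiom: writing $\bar I$-membership via colons against powers of $x_1$ and using version A repeatedly pushes $u$ down. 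By induction $\bar u \in \bar I$, so $u \in I + x_1 R$, i.e. $u = i + x_1 r$ with $i \in I$; then $x_1 r = u - i \in I^{\cl}_R$, and strong colon-capturing version A (with $t=a=1$, $k=1$, after arranging $x_1$ as the start of a partial \sop, using that $x_1$ is a nonzerodivisor mod $I$) gives $r \in I^{\cl}_R$; Noetherian induction / Krull intersection on the descending data $r \in I^{\cl}_R$ then forces $u \in I$.

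I expect the main obstacle to be making the reduction to the quotient ring clean, since we have no persistence-of-cl axiom --- only the three listed properties --- so ``$u \in I^{\cl}_R \Rightarrow \bar u \in \bar I^{\cl}_{\bar R}$'' is not automatic and must be extracted purely from strong colon-capturing version A plus hypothesis (3); getting the bookkeeping of which element is part of a partial system of parameters right (so that version A actually applies) is the delicate point. A secondary subtlety is the choice of $x_1$: I need it simultaneously to be part of a regular system of parameters (to stay inside the inductive hypothesis) and to be a nonzerodivisor on $R/I$ (to run colon-capturing), which requires a prime-avoidance argument that $m \not\subseteq (m^2) \cup (\text{associated primes of } R/I)$, valid because $R$ regular of positive dimension has $m \ne m^2$ and $R/I$ has finitely many associated primes none equal to $m$ once we've reduced away the $m$-primary-maximal-colength and $I=m$ cases. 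Once the descent statement is in hand, the rest is the standard ``divide and use Krull intersection'' loop and is routine.
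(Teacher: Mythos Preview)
Your approach has a genuine gap at its core. You propose to induct on $\dim R$ by passing to $\bar R = R/x_1 R$, but the closure operation $\cl$ is only defined on $R$: there is no closure on $\bar R$ to which the inductive hypothesis could apply, and nothing in hypotheses (1)--(3) produces one, let alone one satisfying the analogous three properties over $\bar R$. You flag this yourself as the ``main obstacle'' but do not actually resolve it; the claim that the passage ``must be extracted purely from strong colon-capturing version A plus hypothesis (3)'' is hope rather than argument.

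The second, more concrete, failure is the step where you write ``$x_1 r \in I^{\cl}_R$, and strong colon-capturing version A \ldots\ gives $r \in I^{\cl}_R$.'' Version A, by its definition, applies only to ideals of the shape $(x_1^t, x_2, \ldots, x_k)$ with $x_1, \ldots, x_k$ a partial system of parameters; it says nothing about $I^{\cl} : x_1$ for a general ideal $I$. (Also, version A requires $a < t$, so your proposed ``$t = a = 1$'' is not even in range.) Having $x_1$ be a nonzerodivisor on $R/I$ does not help: the colon-capturing axiom is a statement about parameter ideals, not about nonzerodivisors on arbitrary quotients. This is not bookkeeping---it is the whole difficulty.

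The paper's proof avoids both issues by a different reduction. Rather than induct on dimension, it reduces (via Krull intersection and Lemma~\ref{lemma1.2}) to showing that $0$ is $\cl$-closed in any finite-length module $M$. Such $M$ is killed by $I_t = (x_1^{t+1}, \ldots, x_d^{t+1})$ for $t \gg 0$, and since $R/I_t$ is zero-dimensional Gorenstein, $M$ embeds in $(R/I_t)^h$; by hypothesis (3) it then suffices to show $I_t$ is $\cl$-closed in $R$. Now version A genuinely applies, because $I_t$ \emph{is} a parameter ideal: if the socle generator $(x_1 \cdots x_d)^t$ lay in $I_t^{\cl}$, then peeling off $x_1^t$, then $x_2^t$, and so on via version A would give $1 \in (x_1, \ldots, x_d)^{\cl} = m^{\cl} = m$, a contradiction. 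The Gorenstein embedding is the key idea you are missing: it is what trades an arbitrary ideal for the specific parameter ideals $I_t$ on which version A has purchase.
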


\begin{proof}
Let $N \subseteq M$ be finitely-generated $R$-modules, and let $x_1,\ldots,x_d$ be regular parameters for $R$ (i.e., $(x_1,\ldots,x_d)=m$). Since $N=\bigcap_s (N+m^sM)$, by Lemma \ref{lemma1.2} it suffices to show that $N+m^sM$ is cl-closed in $M$ for each $s$. Fix a value of $s$. By the same Lemma, we may replace $M$ by $M/(N+m^sM)$ and show that $0$ is cl-closed in this module instead. Since $M$ now has finite length, for some $t$, $I_t=(x_1^{t+1},x_2^{t+1},\ldots,x_d^{t+1})$ kills $M$, and so $M$ is an $R/I_t$-module. Now $I_t$ is $m$-primary, so $R/I_t$ is 0-dimensional. Additionally, $R$ is regular and $x_1,\ldots,x_d$ form a system of parameters, so $R/I_t$ is Gorenstein. Hence $R/I_t$ is injective as a module over itself and is also the only indecomposable injective $R/I_t$-module. This implies that $M \hookrightarrow (R/I_t)^h$ for some $h \ge 0$. Now it suffices to show that $I_t$ is cl-closed in $R$, as then 0 is cl-closed in $(R/I_t)^h$. Since $0 \subseteq M \subseteq (R/I_t)^h$, this implies that $0_M^{\cl} \subseteq 0_{(R/I_t)^h}^{\cl}=0$.

We show that $I_t$ is cl-closed in $R$ for all $t$. Let $x=x_1 x_2 \cdots x_d$. Since $(x_1,\ldots,x_d)=m$, $1$ generates the socle in $R/I_0=R/m$. Then $x^t$ generates the socle in $R/I_t$ for $t \ge 1$. So if $I_t$ is not cl-closed, we must have $x^t \in (I_t)^{\cl}_R$. Thus it suffices to show that $x^t \not\in (I_t)_R^{\cl}$.

Suppose that $x^t \in (I_t)^{\cl}_R$. Then 
\[x_1^t(x_2^t \cdots x_d^t) \in (x_1^{t+1},\ldots,x_d^{t+1})^{\cl}_R.\]
By hypothesis (1) on cl, 
\[x_2^t \cdots x_d^t \in (x_1,x_2^{t+1},\ldots,x_d^{t+1})^{\cl}_R.\]
Using this hypothesis again, 
\[x_3^t \cdots x_d^t \in (x_1,x_2,x_3^{t+1},\ldots,x_d^{t+1})^{\cl}_R.\]
Continuing in this manner, we see that 
\[x_d^t \in (x_1,x_2,\ldots,x_{d-1},x_d^{t+1})^{\cl}_R,\]
and taking one more step, $1 \in (x_1,\ldots,x_d)^{\cl}_R$. However, $m^{\cl}_R=m$, so this is a contradiction. Therefore, $(I_t)_R^{\cl}=I_t$ for all $t$, which finishes the proof that $N_M^{\cl}=N$ for all submodules $N$ of \fg\ $R$-modules $M$.
\end{proof}

\begin{thm}
\label{regularimpliestrivial}
Dietz closures are trivial on regular local rings.
\end{thm}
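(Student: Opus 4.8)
The plan is to show that a Dietz closure $\cl$ on a regular local ring $(R,m)$ is trivial by verifying that it satisfies the three hypotheses of Proposition \ref{trivialonregular} and then invoking that proposition directly. So the real content is checking those three conditions for an arbitrary Dietz closure.

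First I would address hypothesis (2): $m^{\cl}=m$. This is exactly the \axiomc, which holds for every Dietz closure by definition, so there is nothing to prove. Next I would address hypothesis (3): if $N' \subseteq N \subseteq M$ are finitely generated $R$-modules, then $(N')_N^{\cl} \subseteq (N')_M^{\cl}$. This is precisely part (f) of Lemma \ref{lemma1.2}, which holds whenever $\cl$ satisfies the \axioma; since a Dietz closure satisfies the \axioma, this is immediate.

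The substantive step is hypothesis (1): strong colon-capturing, version A. I would need to show that for any partial \sop\ $x_1,\ldots,x_k$ on $R$ and any $a<t$, $(x_1^t,x_2,\ldots,x_k)^{\cl}:x_1^a \subseteq (x_1^{t-a},x_2,\ldots,x_k)^{\cl}$. The natural route is to use Theorem \ref{dietzinbigcm}: since $\cl$ is a Dietz closure, $\cl \subseteq \cl_B$ for some big \CM\ module $B$, and by Proposition \ref{strongccprop}, $\cl_B$ satisfies strong colon-capturing, version A. But this only gives $(x_1^t,x_2,\ldots,x_k)^{\cl}:x_1^a \subseteq (x_1^t,x_2,\ldots,x_k)^{\cl_B}:x_1^a \subseteq (x_1^{t-a},x_2,\ldots,x_k)^{\cl_B}$, whereas hypothesis (1) of Proposition \ref{trivialonregular} wants the conclusion back inside $\cl$-closure, not $\cl_B$-closure. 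The cleanest fix is to observe that the proof of Proposition \ref{trivialonregular} only ever uses hypothesis (1) in the specific form where $x_1,\ldots,x_d$ are regular parameters and the final contradiction is $1 \in m^{\cl}_R$; more precisely, one inducts through the chain and arrives at $1 \in (x_1,\ldots,x_d)^{\cl_B}_R$ after finitely many colon-capturing steps, and then uses that $B$ is a big \CM\ module so $1 \notin mB$, giving $1 \notin m^{\cl_B}_R$, a contradiction. Alternatively — and this is what I would actually write — I would reprove Proposition \ref{trivialonregular}'s argument with $\cl_B$ in place of $\cl$ for the colon-capturing steps, deducing $I_t$ is $\cl_B$-closed hence (since $\cl \subseteq \cl_B$) $\cl$-closed, and then run the socle/finite-length reduction exactly as in that proof. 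Since $\cl \subseteq \cl_B$ forces $N^{\cl}_M \subseteq N^{\cl_B}_M = N$ once we know $N^{\cl_B}_M = N$, weak $\cl$-regularity follows, and in particular $\cl$ is trivial.

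The main obstacle is the mismatch between $\cl$ and $\cl_B$ in the colon-capturing hypothesis: one has to be slightly careful that the inductive cascade in Proposition \ref{trivialonregular} can be run entirely inside $\cl_B$ (which satisfies strong colon-capturing version A by Proposition \ref{strongccprop}, and satisfies $m^{\cl_B} = m$ since $mB \neq B$, and satisfies hypothesis (3) by Lemma \ref{lemma1.2}(f) since $\cl_B$ satisfies the \axioma\ by Lemma \ref{moduleclosurelemma}), so that $\cl_B$ itself is weakly $\cl_B$-regular on $R$; then containment $\cl \subseteq \cl_B$ immediately forces $\cl$ to be trivial as well. Everything else is a direct appeal to results already in the excerpt.
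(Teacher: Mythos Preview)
Your proposal is correct and, after the initial detour, lands on essentially the same argument as the paper: apply Proposition~\ref{trivialonregular} to $\cl_B$ rather than to $\cl$ (since $\cl_B$ satisfies strong colon-capturing by Proposition~\ref{strongccprop}, and satisfies the other two hypotheses because it is itself a Dietz closure), conclude $\cl_B$ is trivial on $R$, and then use $\cl \subseteq \cl_B$ from Theorem~\ref{dietzinbigcm} to get $\cl$ trivial. The paper's proof is just the last paragraph of your proposal, stated more tersely; the earlier attempt to verify hypothesis~(1) for $\cl$ directly is unnecessary and you correctly abandon it.
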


\begin{proof}
Earlier, we showed that any Dietz closure is contained in a big \CM\ module closure and that big \CM\ module closures satisfy strong colon-capturing. Since they are Dietz closures, they satisfy the other two properties required to use Proposition \ref{trivialonregular}.
Therefore, Dietz closures are trivial on regular rings.
\end{proof}

It is also possible to show that big \CM\ module closures are trivial on regular rings by noting that a big \CM\ module $B$ over a regular ring is faithfully flat \cite{faithfullyflat}, so that ideals and submodules of \fg\ modules are ``contracted" from $B$. 

\begin{thm} 
\label{trivialimpliesregular}
Suppose that $(R,m,K)$ is a local domain that has at least one Dietz closure (in particular, it suffices for $R$ to have equal characteristic and any dimension, or mixed characteristic and dimension at most 3), and that all Dietz closures on $R$ are trivial. Then $R$ is regular.
\end{thm}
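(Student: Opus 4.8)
The plan is to prove the contrapositive: assuming the local domain $(R,m)$ is not regular, produce a nontrivial Dietz closure. Since any big \CM\ module closure is a Dietz closure \cite{dietz}, it suffices to exhibit a big \CM\ $R$-module $B$ with ${\cl}_B$ nontrivial. I would split into two cases according to whether $R$ is \CM.

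If $R$ is not \CM: by hypothesis $R$ has a Dietz closure, hence a balanced big \CM\ module $B$ \cite{dietz}, and by Proposition~\ref{otherstrongcc} the closure ${\cl}_B$ satisfies colon-capturing. Since $R$ is not \CM, no \sop\ is a regular sequence, so some partial \sop\ $x_1,\dots,x_{k+1}$ has $(x_1,\dots,x_k):_R x_{k+1}\supsetneq(x_1,\dots,x_k)$; colon-capturing then gives $(x_1,\dots,x_k)^{{\cl}_B}_R\supsetneq(x_1,\dots,x_k)$, so ${\cl}_B$ is a nontrivial Dietz closure.

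If $R$ is \CM\ but not regular, then $R$, being a domain that is not a field, has $d:=\dim R\ge 1$ and $\text{pd}_R K=\infty$ by Auslander--Buchsbaum--Serre. I would take $C=\Omega^d K$, the $d$-th syzygy of $K$ in a minimal free resolution. A standard depth count shows $C$ is maximal \CM, so, being finitely generated, $C$ is a balanced big \CM\ module and ${\cl}_C$ is a Dietz closure \cite{dietz}; and $C$ is not free, since a free $d$-th syzygy would force $\text{pd}_R K\le d$. It then remains to show ${\cl}_C$ is nontrivial. By Lemma~\ref{lemma1.2}(a) this reduces to finding a finitely generated $L$ and a nonzero $\ell\in L$ with $c\otimes\ell=0$ in $C\otimes_R L$ for every $c\in C$. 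Non-freeness of $C$ gives $\text{Tor}_1^R(C,K)\ne 0$; from the minimal presentation $F_1\xrightarrow{\partial}F_0\to C\to 0$ one builds such an $L$ and $\ell$ explicitly, choosing the torsion relations of $L$ so that each $c\otimes\ell$ becomes a boundary. For example, with $R=k[[x,y,z]]/(xy-z^2)$ and $C=(x,z)$, one can take $L=R^2/\langle(x,0),\,(0,y),\,(z,z)\rangle$ and $\ell$ the image of $(z,0)$: then $\ell\ne 0$ while $x\otimes\ell=z\otimes\ell=0$ in $C\otimes_R L$, so $0\ne\ell\in 0^{{\cl}_C}_L$.

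The hard part will be the last step of the \CM\ case: showing in general that non-freeness of $C$ forces such an $L$ and $\ell$ to exist --- equivalently, that one can choose a short exact sequence $0\to R\ell\to L\to L'\to 0$ whose connecting map $\text{Tor}_1^R(C,L')\to C\otimes_R R\ell$ is surjective. The rest draws only on results already available: colon-capturing for big \CM\ module closures (Proposition~\ref{otherstrongcc}), the fact that the $d$-th syzygy of $K$ over a \CM\ local ring is maximal \CM, and Dietz's theorem that big \CM\ module closures are Dietz closures.
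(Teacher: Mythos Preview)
Your reduction to the \CM\ case via colon-capturing is correct and matches the paper, and taking $C=\Omega^d K$ is also the paper's choice. The genuine gap is exactly where you flag it: you have not shown that ${\cl}_C$ is nontrivial for a general nonfree maximal \CM\ module $C$. Your Tor reformulation is accurate---you need a cyclic $R\ell\subseteq L$ with the connecting map $\text{Tor}_1^R(C,L/R\ell)\to C\otimes_R R\ell$ surjective---but you supply no mechanism to produce such an $L$ from the bare hypothesis $\text{Tor}_1^R(C,K)\neq 0$. The hypersurface example does not indicate a general construction; it exploits explicit relations in that one ring.

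The paper's missing ingredient is the \emph{approximately Gorenstein} structure of $R$: a descending sequence of irreducible $m$-primary ideals $I_t$ cofinal with the powers of $m$, with compatible socle generators $u_t\in R/I_t$. The argument is a splitting criterion: if $v\in C$ satisfies $u_t v\notin I_t C$ for all $t$, then each $R/I_t\to C/I_t C$ is injective, hence $E_R(K)\to E_R(K)\otimes C$ is injective, hence $R\xrightarrow{\cdot v} C$ splits. So if $C$ has no free summand, then $C=\bigcup_t(I_tC:_C u_t)$ and in particular $u_t\in I_t^{{\cl}_C}\setminus I_t$ for $t\gg 0$. This manufactures precisely the pair you were seeking: $L=R/I_t$, $\ell=\bar u_t$.

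One further correction: ``not free'' is not enough. If $C=C'\oplus R$ then ${\cl}_C={\cl}_{C'}\cap{\cl}_R$ by Proposition~\ref{intersection}, and ${\cl}_R$ is trivial, so ${\cl}_C$ is trivial regardless of $C'$. You must pass to an indecomposable nonfree summand of $\Omega^d K$ (still maximal \CM), or invoke Dutta's theorem that $\Omega^d K$ itself has no free summand when $R$ is not regular; the paper does the former and remarks on the latter.
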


\begin{proof}
Since $R$ has a big \CM\ module $B$ that gives a trivial Dietz closure ${\cl}_B$, $R$ is \CM. We show that $R$ is also approximately Gorenstein. If $\dim(R) \ge 2$, then $\depth(R) \ge 2$, so this follows from \cite{cyclicpurity}. If $\dim(R)=0$, then $R$ is a field, which is approximately Gorenstein. If $\dim(R)=1$, then the integral closure $S$ of $R$ is a big \CM\ algebra for $R$. Let $b/a \in S$. We have $b \in (a)^{{\cl}_S}$, but ${\cl}_S$ must be trivial on $R$, so $b \in (a)$. Hence $S=R$, and so $R$ is normal. By \cite{cyclicpurity}, $R$ is approximately Gorenstein. 

Let $I_1 \supseteq I_2 \supseteq \ldots \supseteq I_t \supseteq \ldots$ be a sequence of $m$-primary ideals such that each $R/I_t$ is Gorenstein and the $I_t$ are cofinal with the powers of $m$. Let $E=E_R(K)$, the injective hull of $K$ over $R$. Then $E$ is equal to the increasing union $\bigcup_t \Ann_E(I_t)$. Further, each $\Ann_E(I_t)$ is isomorphic to $E_{R/I_t}(K) \cong R/I_t$, so we have injective maps $R/I_t \to R/I_{t+1}$ for each $t \ge 1$. Let $u_1$ be a generator of the socle in $R/I_1$. For $t \ge 1$, let $u_{t+1}$ be the image of $u_t$ in $R/I_{t+1}$, which will generate the socle in $R/I_{t+1}$.

Suppose that $M$ is a \fg\ \CM\ module with no free summand. We will show that $M$ is equal to the increasing union of $I_tM:u_t$, so that $u_tM \subseteq I_tM$ for $t \gg 1$. This will imply that $M$ gives us a nontrivial Dietz closure. To see that the union is increasing, suppose that $v \in I_tM:u_t$. Then $u_tv \in I_tM$. Applying the map $I_tM \to I_{t+1}M$ induced by the map $R/I_t \to R/I_{t+1}$, we see that $u_{t+1}v \in I_{t+1}M$.

Suppose that $M \ne \bigcup_t I_tM:u_t$. Then we can pick $v \in M- \bigcup_t I_tM:u_t$. For every $t \ge 1$, $u_tv \not\in I_tM$. Consider the map $R \to M$ given by multiplication by $v$. Since $R$ is local and $M$ is \fg, this splits if and only if $E \to E \otimes M$ is injective. But this is true if and only if $R/I_t \to M/I_tM$ is injective for all $t \gg 1$. For any $t$, $u_t \mapsto u_tv \not\in I_{t+1}M$, so the socle of $R/I_t$ is not contained in the kernel of the map $R/I_t \to M/I_tM$. Hence $R/I_t \to M/I_tM$ is injective, which implies that $R \to M$ splits. This contradicts our assumption that $M$ had no free summand.

If $R$ is not regular, then since $R$ is \CM, $\syz^d(k)$ is a \fg\ \CM\ module that is not free. Then it has some minimal direct summand (which can't be written as a nontrivial direct sum) that is not free. This gives us a nontrivial Dietz closure on $R$. Therefore, $R$ must be regular.
\end{proof}

\begin{remark}
By a result of \cite{dutta}, $\syz^d(k)$ has no free summand when $R$ is not regular, so we can use $\syz^d(k)$ instead of a minimal direct summand of it.
\end{remark}

The following is a corollary to the proof of Theorem \ref{trivialimpliesregular}.

\begin{corollary}
\label{nontrivialdietzclosure}
Let $R$ be a local domain with at least one Dietz closure. Suppose that $R$ has a \fg\ \CM\ module $B$ with no free summands and that $R$ is approximately Gorenstein but not regular. Then $R$ has a nontrivial Dietz closure, ${\cl}_B$.

$R$ satisfies these hypotheses when it is \CM, $\dim(R)\ne 1$, and $R$ is not regular.
Alternatively, it suffices for $R$ to be complete but not regular. If $R$ is \CM\ of dimension not equal to 1 but is not regular, $B=\syz^d(k)$ gives a nontrivial closure on $R$. In particular, if $R$ has equal characteristic, $\dim(R) \ne 1$, and $R$ is weakly F-regular (or F-regular or strongly F-regular) but not regular, ${\cl}_{\syz^d(k)}$ is nontrivial on $R$.
\end{corollary}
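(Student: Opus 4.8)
The plan is to reuse the machinery built inside the proof of Theorem~\ref{trivialimpliesregular}; essentially nothing new is needed. That proof extracted, from the hypothesis that $R$ is approximately Gorenstein, a descending chain of $m$-primary ideals $I_1 \supseteq I_2 \supseteq \cdots$ cofinal with the powers of $m$ and with each $R/I_t$ Gorenstein (so Artinian with one-dimensional socle); and, via $E = E_R(k) = \bigcup_t \Ann_E(I_t)$ together with the identifications $\Ann_E(I_t) \cong E_{R/I_t}(k) \cong R/I_t$, it produced compatible inclusions $R/I_t \hookrightarrow R/I_{t+1}$ and a coherent system of socle generators $u_t$ of $R/I_t$ (with $u_{t+1}$ the image of $u_t$), so in particular $u_t \notin I_t$ for every $t$. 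It also showed that for any \fg\ \CM\ $R$-module $M$ with no free summand one has $M = \bigcup_t (I_t M :_M u_t)$, an increasing union, whence $u_t M \subseteq I_t M$ for all $t \gg 0$ since $M$ is finitely generated.

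Granting this, I would finish as follows. Apply the last fact with $M = B$: for $t \gg 0$ we have $u_t B \subseteq I_t B$. By Definition~\ref{moduleclosure} (taking $S = B$, $N = I_t$, and ambient module $R$), the inclusion $u_t B \subseteq I_t B$ says precisely that $b \otimes u_t \in \im(B \otimes I_t \to B \otimes R)$ for every $b \in B$, i.e.\ $u_t \in (I_t)_R^{{\cl}_B}$; since $u_t \notin I_t$, the closure ${\cl}_B$ is nontrivial. It remains to observe that ${\cl}_B$ is a Dietz closure, and for this I would note that a nonzero \fg\ \CM\ (i.e.\ maximal \CM) module is itself a big \CM\ module: $mB \ne B$ by Nakayama, and if $x_1,\ldots,x_d$ is a \sop\ for $R$ then every associated prime of $B$ has coheight $d$, so by induction $x_i$ is a \nzd\ on $B/(x_1,\ldots,x_{i-1})B$ (which is again maximal \CM\ over $R/(x_1,\ldots,x_i)$), i.e.\ $x_1,\ldots,x_d$ is a regular sequence on $B$. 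Hence ${\cl}_B$ is a Dietz closure by Dietz's theorem; alternatively one checks directly that it satisfies the \axioma\ and \axiomb\ by Lemma~\ref{moduleclosurelemma}, the \axiomc\ because $B \ne 0$ is \fg, and the \axiomd\ because $B$ is big \CM\ (cf.\ Propositions~\ref{strongccprop} and \ref{otherstrongcc}).

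For the sufficient conditions: if $R$ is \CM\ with $\dim R = d \ne 1$ and not regular, then $R$ is approximately Gorenstein -- it is a field when $d = 0$, and when $d \ge 2$ one has $\depth R = d \ge 2$, so \cite{cyclicpurity} applies -- and $B = \syz^d(k)$ is a nonzero \fg\ \CM\ module with no free summand: over a \CM\ local ring a $d$-th syzygy is maximal \CM\ (iterate the depth lemma), and $\syz^d(k)$ has no free summand because $R$ is not regular, by \cite{dutta}. If instead $R$ is complete but not regular, then $R$ is reduced and equidimensional, hence approximately Gorenstein by \cite{cyclicpurity}, and the remaining hypothesis -- existence of a \fg\ maximal \CM\ module with no free summand -- holds in particular when $R$ is \CM\ (take $B = \syz^d(k)$) or when $\dim R \le 2$. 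The last assertion of the statement is then the special case in which $R$ is weakly $F$-regular (resp.\ $F$-regular, strongly $F$-regular) in equal characteristic, which forces $R$ to be \CM\ and to carry a Dietz closure (e.g.\ tight closure), so the \CM\ case applies with $B = \syz^d(k)$.

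The proof is thus essentially a transcription of parts of Theorem~\ref{trivialimpliesregular}, and there is no essential new obstacle; the only point that wants a separate (routine) verification is the claim that a finitely generated maximal \CM\ module is a big \CM\ module -- the coheight/regular-sequence computation above -- which is what licenses calling ${\cl}_B$ a Dietz closure and hence reaching the desired conclusion via Dietz's equivalence (or Propositions~\ref{strongccprop}--\ref{otherstrongcc}).
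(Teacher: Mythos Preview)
Your proposal is correct and takes essentially the same approach as the paper: the paper states this result as a direct corollary to the proof of Theorem~\ref{trivialimpliesregular} without giving a separate argument, and you have correctly extracted the relevant portion of that proof---namely, that approximate Gorensteinness yields the system $\{I_t,u_t\}$, and that for any finitely generated maximal Cohen--Macaulay module $B$ with no free summand one has $u_tB\subseteq I_tB$ for $t\gg 0$, whence $u_t\in (I_t)^{{\cl}_B}\setminus I_t$. Your explicit verification that a finitely generated maximal Cohen--Macaulay module is a big Cohen--Macaulay module (so that ${\cl}_B$ is a Dietz closure by Dietz's result) makes explicit a point the paper leaves implicit, and your caution about the ``complete but not regular'' case---noting that the existence of a finitely generated Cohen--Macaulay module with no free summand requires further input---is appropriate.
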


\section{Proofs that certain closures are not Dietz closures}

Dietz gives some examples of Dietz closures, as well as some closures that fail to be Dietz closures. Understanding why certain closure operations fail to be Dietz closures adds to our understanding of Dietz closures, and may help us find a good closure operation for rings of mixed characteristic. The following result gives one way for a closure operation to be ``too big" to be a Dietz closure.

%

\begin{thm}
\label{notdietz}
Let $R$ be a local domain with $x_1,\ldots,x_k$ part of a \sop\ for $R$ and $(x_1 \cdots x_k)^t \in (x_1^{t+1},x_2^{t+1},\ldots,x_k^{t+1})^{\cl}$ for some $t \ge 0$ and closure operation cl. Then cl is not a Dietz closure.
\end{thm}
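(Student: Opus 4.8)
The plan is to argue by contradiction: assume cl is a Dietz closure, and derive that the faithfulness axiom fails, i.e. that $m_R^{\cl}\ne m$ (equivalently $1\in m^{\cl}_R$). The engine of the argument is Proposition~\ref{trivialonregular}, or more precisely the computation inside its proof, but run in the opposite direction. In that proof one shows that if a closure has strong colon-capturing version A, then $(x_1\cdots x_d)^t\in(x_1^{t+1},\dots,x_d^{t+1})^{\cl}$ would force $1\in(x_1,\dots,x_d)^{\cl}$. Here we are \emph{handed} the hypothesis $(x_1\cdots x_k)^t\in(x_1^{t+1},\dots,x_k^{t+1})^{\cl}$ for a partial system of parameters, and we want to reach the same contradiction $1\in(x_1,\dots,x_k)^{\cl}\subseteq m^{\cl}_R$.

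First I would record that a Dietz closure satisfies colon-capturing — indeed, the generalized colon-capturing axiom applied with $M=R$, $J=(x_1,\dots,x_j)$, $f$ the quotient map $R\to R/J$, and $v=1$ gives $(x_1,\dots,x_j):x_{j+1}\subseteq(x_1,\dots,x_j)^{\cl}$ — but in fact I expect to need the genuinely stronger strong colon-capturing version A on cl itself, which a general Dietz closure need not have. To get around this, the key move is to invoke Theorem~\ref{dietzinbigcm}: cl is contained in ${\cl}_B$ for some big \CM\ module $B$, and by Proposition~\ref{strongccprop} the closure ${\cl}_B$ \emph{does} satisfy strong colon-capturing version A. So the hypothesis upgrades: $(x_1\cdots x_k)^t\in(x_1^{t+1},\dots,x_k^{t+1})^{\cl}\subseteq(x_1^{t+1},\dots,x_k^{t+1})^{{\cl}_B}$.

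Next I would run the telescoping computation from the proof of Proposition~\ref{trivialonregular} with ${\cl}_B$ in place of cl. Writing $x^{(i)}:=x_i^t\cdots x_k^t$, from $x_1^t\cdot x^{(2)}\in(x_1^{t+1},\dots,x_k^{t+1})^{{\cl}_B}$ and strong colon-capturing version A (with $x_1$ as the distinguished parameter, $a=t$, exponent $t+1$) we get $x^{(2)}=x_2^t\cdots x_k^t\in(x_1,x_2^{t+1},\dots,x_k^{t+1})^{{\cl}_B}$; here I should double-check that $x_1,x_2^{t+1},\dots,x_k^{t+1}$ is still part of a system of parameters, which it is since $x_1,\dots,x_k$ are and powers of parameters are parameters. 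Iterating — peel off $x_2^t$, then $x_3^t$, and so on — after $k$ steps we arrive at $1\in(x_1,x_2,\dots,x_k)^{{\cl}_B}$, hence $1\in m^{{\cl}_B}_R$, so $m^{{\cl}_B}_R=R\ne m$. But ${\cl}_B$ is a big \CM\ module closure, and such closures satisfy the faithfulness axiom (this is Dietz's theorem that ${\cl}_B$ is a Dietz closure, or directly: $1\notin m^{{\cl}_B}_R$ because $1\otimes 1\notin \im(B\otimes m\to B)=mB$ as $mB\ne B$), a contradiction. Therefore cl is not a Dietz closure.

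The main obstacle — and the reason the proof is not just a one-line appeal to Proposition~\ref{trivialonregular} — is that a general Dietz closure need not itself satisfy strong colon-capturing version A, only ordinary colon-capturing; the fix is precisely the detour through Theorem~\ref{dietzinbigcm} and Proposition~\ref{strongccprop} to replace cl by the larger, better-behaved ${\cl}_B$, after which the telescoping is routine. One small technical point to handle carefully: at each stage of the telescope we are applying strong colon-capturing version A to the tuple obtained so far, e.g. $(x_1,\dots,x_{j-1},x_j^{t+1},\dots,x_k^{t+1})$, and we must make sure the relevant entry really is of the form $x_j^{t+1}$ with the earlier entries $x_1,\dots,x_{j-1}$ forming, together with it, part of a system of parameters — which holds throughout since replacing some parameters of a partial system of parameters by powers still yields a partial system of parameters.
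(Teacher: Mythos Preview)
Your proposal is correct and follows essentially the same approach as the paper: assume cl is a Dietz closure, pass to a big \CM\ module closure ${\cl}_B$ via Theorem~\ref{dietzinbigcm}, use strong colon-capturing version A (Proposition~\ref{strongccprop}) to telescope down from $(x_1\cdots x_k)^t\in(x_1^{t+1},\ldots,x_k^{t+1})^{{\cl}_B}$ to $1\in(x_1,\ldots,x_k)^{{\cl}_B}\subseteq m^{{\cl}_B}=m$, contradicting faithfulness. You have also correctly identified the key obstacle (a general Dietz closure need not itself satisfy strong colon-capturing version A) and its resolution, as well as the small technical point that powers of parameters remain parameters.
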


\begin{proof}
Suppose that cl is a Dietz closure. Then by Theorem \ref{dietzinbigcm}, there is a big \CM\ module $B$ such that ${\cl} \subseteq {\cl}_B$. Then we have
\[(x_1 \cdots x_k)^t \in (x_1^{t+1},\ldots,x_k^{t+1})^{\cl} \subseteq (x_1^{t+1},\ldots,x_k^{t+1})^{{\cl}_B}.\]
By Proposition \ref{strongccprop}, this implies that
\[(x_2 \cdots x_k)^t \in (x_1,x_2^{t+1},\ldots,x_k^{t+1})^{{\cl}_B},\]
which implies that
\[(x_3 \cdots x_k)^t \in (x_1,x_2,x_3^{t+1},\ldots,x_k^{t+1})^{{\cl}_B},\]
and so on until
\[1 \in (x_1,\ldots,x_k)^{{\cl}_B}.\]
But $(x_1,\ldots,x_k)^{{\cl}_B} \subseteq m^{{\cl}_B}=m$. As $B \not\subseteq (x_1,\ldots,x_k)B$, this is a contradiction.
Therefore, cl is not a Dietz closure.
\end{proof}

\begin{corollary}
Integral closure is not a Dietz closure on $R$ if $\dim(R) \ge 2$.
\end{corollary}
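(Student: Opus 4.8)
The plan is to invoke Theorem \ref{notdietz}. By that theorem it suffices to exhibit a partial \sop\ $x_1,\ldots,x_k$ for $R$ together with an integer $t \ge 0$ for which $(x_1\cdots x_k)^t$ lies in the integral closure of $(x_1^{t+1},\ldots,x_k^{t+1})$. Since $\dim(R) \ge 2$, we may choose elements $x_1,x_2$ forming part of a \sop\ for $R$; I will take $k=2$ and $t=1$.

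The key step is then the observation that $x_1 x_2$ lies in the integral closure $\overline{(x_1^2,x_2^2)}$. This is the classical computation of the integral closure of a monomial ideal: the element $x_1 x_2$ satisfies the monic equation $T^2 - x_1^2 x_2^2 = 0$, and $x_1^2 x_2^2 = (x_1^2)(x_2^2) \in (x_1^2,x_2^2)^2$, which is precisely the defining condition for $x_1 x_2$ to be integral over the ideal $(x_1^2,x_2^2)$. Hence $(x_1 x_2)^1 \in \overline{(x_1^{1+1},x_2^{1+1})}$. Applying Theorem \ref{notdietz} with this partial \sop, with $t=1$, and with cl equal to integral closure, we conclude that integral closure is not a Dietz closure on $R$.

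There is no real obstacle beyond this standard monomial computation; the only role of the hypothesis $\dim(R) \ge 2$ is to guarantee the existence of a length-two partial \sop, without which one cannot run the iterated colon-capturing collapse inside the proof of Theorem \ref{notdietz}. For completeness one may note that the choice $t \ge 1$ is genuinely necessary, since $\overline{(x_1,\ldots,x_k)} \subseteq \sqrt{(x_1,\ldots,x_k)} \ne R$, so $t=0$ never yields the required containment.
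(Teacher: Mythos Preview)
Your proof is correct and follows essentially the same approach as the paper: both choose a length-two partial \sop\ $x_1,x_2$, observe that $x_1x_2 \in \overline{(x_1^2,x_2^2)}$, and then invoke Theorem \ref{notdietz} with $k=2$, $t=1$. You simply make explicit the monic equation witnessing this integral dependence, which the paper leaves as the assertion ``we always have $xy \in \overline{(x^2,y^2)}$.''
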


\begin{proof}
Let $x,y$ be part of a \sop\ for $R$. We always have $xy \in \overline{(x^2,y^2)}$, so by Theorem \ref{notdietz}, integral closure is not a Dietz closure.
\end{proof}

%

\begin{citeddefn}[\cite{regularclosure}]
We define \textit{regular closure} on a ring $R$ by $u \in N_M^{\reg}$ if for every regular $R$-algebra $S$, $u \in N_M^{{\cl}_S}$, where $N \subseteq M$ are \fg\ $R$-modules and $u \in M$.
\end{citeddefn}

\begin{lemma}
\label{forregclosure}
Let $R=k[[x,y,z]]/(x^3+y^3+z^3)$, where $\text{char}(k) \ne 3.$ Then $(x,y)^t \subseteq (x^t,y^t)^\reg$.
\end{lemma}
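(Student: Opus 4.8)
The plan is to reduce the containment to the assertion that $(x,y)S$ is a \emph{principal} ideal in every regular local $R$-algebra $S$, and then to prove that assertion geometrically. First, to prove $(x,y)^t\subseteq(x^t,y^t)^{\reg}$ it is enough to show $(x,y)^tS\subseteq(x^t,y^t)S$ for every regular \emph{local} $R$-algebra $S$ (both sides of a containment of ideals are checked after localizing, and localizations of a regular ring at maximal ideals are regular local); we may further pass to the completion, so assume $S$ is also complete, hence a UFD and excellent. Write $d=\gcd(x,y)$ in $S$ and $x=dx'$, $y=dy'$ with $\gcd(x',y')=1$. If $(x,y)S$ is principal, then $(x',y')S=S$; a comaximal pair has comaximal powers (expand $1=(\alpha x'+\beta y')^{2t-1}$, every monomial landing in $(x'^t)+(y'^t)$), so $(x'^t,y'^t)S=S$ and therefore $(x^t,y^t)S=(d^t)=(x,y)^tS$. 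Thus it suffices to prove that $(x,y)S$ is principal for every such $S$.

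\textbf{Step 2: the smooth locus, and the remaining configuration.} If $mS=S$, then $\operatorname{Spec}S$, being local, maps into one of $D(x),D(y),D(z)$, i.e.\ $R\to S$ factors through $R_x$, $R_y$ or $R_z$; in $R_x$ and $R_y$ the ideal $(x,y)$ is already the unit ideal, and in $R_z$ so is it since $1=-z^{-3}(x^3+y^3)\in(x^3,y^3)R_z\subseteq(x,y)R_z$. So $(x,y)S=S$, as wanted. Otherwise $mS\subseteq\mathfrak m_S$. Suppose for contradiction that $(x,y)S$ is not principal; then $(x',y')S\neq S$, and a minimal prime $\mathfrak p$ of $(x',y')S$ has height $2$ (it is $2$-generated and lies in no principal prime, by $\gcd(x',y')=1$). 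Localize to get a $2$-dimensional regular local (complete, hence excellent) ring $T=S_{\mathfrak p}$ in which $(x',y')T$ is $\mathfrak m_T$-primary. From $d^3(x'^3+y'^3)+z^3=0$ we get $d^3\mid z^3$, hence $z=dz'$; putting $a=x'$, $b=y'$, $c=z'$ yields $a^3+b^3+c^3=0$ in $T$, with $(a,b)T$ (hence $(a,b,c)T$) $\mathfrak m_T$-primary and $a,b,c$ pairwise coprime. It remains to show no such $T$ exists.

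\textbf{Step 3: the geometric contradiction.} Because $\operatorname{char}k\neq3$, the plane cubic $E\colon X^3+Y^3+Z^3=0$ in $\mathbb P^2_k$ is smooth, so of genus $1$, and hence admits no nonconstant morphism from a rational curve. Since $(a,b,c)T$ is $\mathfrak m_T$-primary, the triple $[a:b:c]$ has no common zero on the punctured spectrum $U=\operatorname{Spec}T\setminus\{\mathfrak m_T\}$ and satisfies the cubic, so it defines a morphism $\varphi\colon U\to E$. Resolve the indeterminacy of $[a:b:c]\colon\operatorname{Spec}T\dashrightarrow E$ by a proper birational $\pi\colon W\to\operatorname{Spec}T$ with $W$ regular; as $\operatorname{Spec}T$ is regular, $\pi$ is a composition of blow-ups of closed points, so $\pi^{-1}(\mathfrak m_T)$ is a connected union of rational curves, which therefore maps to a single point $e\in E$ under the extended morphism $W\to E$. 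Consequently the closure $\Gamma\subseteq\operatorname{Spec}T\times_kE$ of the graph of $\varphi$ is proper and birational over $\operatorname{Spec}T$ with all fibres finite --- an isomorphism over $U$, and the single point $(\mathfrak m_T,e)$ over $\mathfrak m_T$ --- so by Zariski's Main Theorem (the target is normal) $\Gamma\xrightarrow{\ \sim\ }\operatorname{Spec}T$, i.e.\ $[a:b:c]$ is a morphism on all of $\operatorname{Spec}T$. But a morphism to $\mathbb P^2$ from the regular local ring $T$ (whose Picard group is trivial) is given by three elements of $T$ generating the unit ideal, and agreeing with $[a:b:c]$ on the dense open $U$ forces that triple to equal $(a,b,c)$ up to a unit of $T$ --- contradicting $(a,b,c)T\subseteq\mathfrak m_T$. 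This contradiction completes Step 2 and hence the proof.

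\textbf{The main obstacle.} Everything except one point is bookkeeping: the reduction to principality, the comaximal-powers expansion, resolution of surface singularities, Zariski's Main Theorem, and triviality of $\operatorname{Pic}$ of a regular local ring are all standard. The genuine content is the positive-genus input in Step 3 --- that $E$ receives no nonconstant map from a tree of rational curves --- and this is exactly where $\operatorname{char}k\neq3$ is indispensable, since in characteristic $3$ one has $X^3+Y^3+Z^3=(X+Y+Z)^3$, a triple line (itself rational), and the argument collapses.
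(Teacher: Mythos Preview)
Your proof is correct and takes a genuinely different route from the paper's. The paper simply invokes a structural result from Hochster--Huneke (\emph{Phantom Homology}): after reducing to complete regular local $S$ with algebraically closed residue field, any solution $(a,b,c)$ of $u^3+v^3+w^3=0$ in $S$ has the form $(\alpha d,\beta d,\gamma d)$ with at least two of $\alpha,\beta,\gamma$ units (or all zero); from this it is immediate that $(x^t,y^t)S=(d^t)\supseteq (x,y)^tS$. You instead give a self-contained geometric argument for the key principality statement: you reduce to a $2$-dimensional regular local ring $T$ carrying a solution $(a,b,c)$ that is $\mathfrak m_T$-primary, interpret it as a rational map to the smooth genus-one cubic $E$, resolve indeterminacy by point blow-ups (exceptional locus a tree of $\mathbb P^1$'s, hence collapsing to a point of $E$), and use Zariski's Main Theorem plus triviality of $\operatorname{Pic}$ of a regular local ring to force $(a,b,c)T=T$, a contradiction. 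The paper's approach is much shorter because it outsources the hard step; your approach makes the geometric content (and the role of $\operatorname{char}k\neq 3$) fully explicit, at the cost of invoking resolution of indeterminacy for excellent surfaces and ZMT.

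One small slip: the ring $T=S_{\mathfrak p}$ is a localization of a complete local ring at a non-maximal prime, so it is \emph{not} complete in general. This does not damage the argument --- $T$ is excellent because localizations of excellent rings are excellent, and $T$ is a UFD simply because it is regular local --- but the parenthetical ``(complete, hence excellent)'' should read ``(excellent, as a localization of an excellent ring)''. Everything in Step~3 (blow-ups, rationality of exceptional curves over the relevant residue fields, ZMT, $\Gamma(U,\mathcal O_U)=T$ via depth $2$) goes through for a $2$-dimensional regular excellent local $k$-algebra.
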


\begin{proof}
In \cite{phantomhomology}, Hochster and Huneke show that $z \in (x,y)^\reg$ but $z \not\in (x,y)^*$. To do this, they reduce to the case of maps to complete regular local rings with algebraically closed residue field and show that any solution $(a,b,c)$ of $u^3+v^3+w^3=0$ in $S$ has the form $(\alpha d,\beta d,\gamma d)$, where $d \in S$ and $(\alpha,\beta,\gamma)$ is a solution of the same equation such that either at least two of $\alpha,\beta,$ and $\gamma$ are units, or all three are 0.

If all three are 0, then clearly $(x,y)^tS \subseteq (x^t,y^t)S$. If at least one of $\alpha$ or $\beta$ is a unit, then $(x^t,y^t)S=(d^t)$, which must contain $(x,y)^tS$. Since these are the only possible cases, we have $(x,y)^tS \subseteq (x^t,y^t)S$ for any regular $R$-algebra $S$. Hence $(x,y)^t \subseteq (x^t,y^t)^\reg$.
\end{proof}

\begin{corollary}
Regular closure may fail to be a Dietz closure.
\end{corollary}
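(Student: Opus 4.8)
The plan is to combine Lemma~\ref{forregclosure} with the criterion of Theorem~\ref{notdietz}, applied to the hypersurface $R = k[[x,y,z]]/(x^3+y^3+z^3)$ with $\text{char}(k) \neq 3$. First I would record the basic features of $R$: it is a two-dimensional \cld, and $x,y$ form a \sop\ for $R$, since $R/(x,y) \cong k[[z]]/(z^3)$ is Artinian; in particular $x,y$ is part of a \sop. Also, regular closure is a genuine closure operation, being the intersection of the module closures $\cl_S$ over all regular $R$-algebras $S$ (the intersection of closure operations is a closure operation, as noted at the start of Section~\ref{smallestclosures}), so Theorem~\ref{notdietz} is applicable to it.

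Next I would feed Lemma~\ref{forregclosure} into Theorem~\ref{notdietz}. With $k=2$, $x_1=x$, $x_2=y$, and $t=1$, the hypothesis of Theorem~\ref{notdietz} asks for $xy \in (x^2,y^2)^{\reg}$. This is immediate from Lemma~\ref{forregclosure} taken with exponent $2$: that lemma gives $(x,y)^2 \subseteq (x^2,y^2)^{\reg}$, and $xy \in (x,y)^2$. (More generally, for any $t \geq 1$ one has $(xy)^t = x^ty^t \in (x,y)^{2t} \subseteq (x,y)^{t+1} \subseteq (x^{t+1},y^{t+1})^{\reg}$, using $2t \ge t+1$ and then Lemma~\ref{forregclosure} with exponent $t+1$, so the hypothesis of Theorem~\ref{notdietz} holds for every $t\geq 1$.) Thus Theorem~\ref{notdietz} applies and shows that $\reg$ is not a Dietz closure on $R$. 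Since $R$ has equal characteristic it does possess Dietz closures, so this is genuinely a case where regular closure fails to be a Dietz closure, establishing the corollary.

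I do not expect any real obstacle here: the substantive content is already packaged in Lemma~\ref{forregclosure} (which imports Hochster and Huneke's classification of solutions of $u^3+v^3+w^3=0$ over regular local rings) and in Theorem~\ref{notdietz}. The only points requiring a moment's care are verifying that $x,y$ is part of a \sop\ (clear, since $\dim R = 2$ and $R/(x,y)$ is Artinian) and matching the exponent conventions of the two statements, which is the small computation $x^ty^t \in (x,y)^{t+1}$ recorded above. One may additionally remark that on this same ring $R$ tight closure \emph{is} a Dietz closure while $\reg$ is strictly larger --- for instance $z \in (x,y)^{\reg}$ but $z \notin (x,y)^{*}$, by the Hochster--Huneke computation cited in Lemma~\ref{forregclosure} --- so this exhibits a concrete natural closure operation lying above tight closure that nonetheless violates the Dietz axioms.
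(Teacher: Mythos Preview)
Your proof is correct and follows essentially the same approach as the paper: exhibit $R = k[[x,y,z]]/(x^3+y^3+z^3)$ with $\operatorname{char}(k)\neq 3$, use Lemma~\ref{forregclosure} at exponent $2$ to obtain $xy \in (x^2,y^2)^{\reg}$, and invoke Theorem~\ref{notdietz} with $k=2$, $t=1$. The paper's proof is a one-line version of this; your added checks (that $x,y$ is a \sop, that regular closure is a closure operation) and side remarks are accurate but not strictly needed.
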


\begin{proof}
Consider the ring  $R=k[[x,y,z]]/(x^3+y^3+z^3)$, where $\text{char}(k) \ne 3.$ In this ring, $xy \in (x^2,y^2)^\reg$ by Lemma \ref{forregclosure}.
\end{proof}

\begin{remark}
By the exact argument used in Lemma \ref{forregclosure}, UFD closure (consider all $R$-algebras that are UFD's, rather than the regular $R$-algebras) may fail to be a Dietz closure.
\end{remark}

\begin{thm}
\label{solidclosurechar0}
For rings of equal characteristic 0, solid closure is not always a Dietz closure. In particular, solid closure is not a Dietz closure on regular local rings containing the rationals with dimension at least 3.
\end{thm}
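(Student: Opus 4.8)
The plan is to obtain the statement as an application of Theorem \ref{notdietz}, fed by Hochster's description of solid closure in equal characteristic $0$. It suffices to show that on a regular local ring $R$ containing $\Q$ with $\dim R \ge 3$ there is a partial \sop\ $x_1,x_2,x_3$ and an integer $t \ge 0$ such that $(x_1x_2x_3)^t$ lies in the solid closure of $(x_1^{t+1},x_2^{t+1},x_3^{t+1})$ (which we write $(x_1^{t+1},x_2^{t+1},x_3^{t+1})^{\bigstar}$); since $R$ is a local domain, Theorem \ref{notdietz} then gives at once that solid closure is not a Dietz closure on $R$. One could equally well argue via Theorem \ref{regularimpliestrivial}: a Dietz closure on the regular ring $R$ must be trivial, but $(x_1x_2x_3)^t \notin (x_1^{t+1},x_2^{t+1},x_3^{t+1})$ by the Koszul complex, so such an element witnesses that solid closure is strictly larger than the trivial closure. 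Either route needs the same input.

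That input is Hochster's work on solid closure \cite{solidclosure}: in \charp\ solid closure coincides with tight closure and captures colons, but in equal characteristic $0$ it does not, and the failure already appears for three parameters of a regular local ring of dimension at least $3$. Concretely, I would invoke Hochster's computation that for such $R$, with regular parameters $x_1,\ldots,x_d$, one has $(x_1x_2x_3)^t \in (x_1^{t+1},x_2^{t+1},x_3^{t+1})^{\bigstar}$ for a suitable $t$ --- the point being that solidity of the relevant module is detected by a \v{C}ech cohomology class that is forced to vanish in characteristic $p$ but need not vanish in characteristic $0$. Once this is granted the argument is formal: $x_1,x_2,x_3$ is part of a \sop\ for the local domain $R$ and the hypothesis of Theorem \ref{notdietz} is met, so solid closure is not a Dietz closure on $R$; since such rings exist in every dimension $\ge 3$, both assertions of the theorem follow.

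The main obstacle is entirely this imported fact, and specifically putting it in the form Theorem \ref{notdietz} requires: it is not enough to know abstractly that solid closure can be nontrivial in equal characteristic $0$, one needs the nontriviality to manifest as a monomial $(x_1x_2x_3)^t$ lying inside the solid closure of $(x_1^{t+1},x_2^{t+1},x_3^{t+1})$ for a partial \sop\ $x_1,x_2,x_3$. Establishing that is where all the real work lies, and it is carried out --- for all regular local rings containing $\Q$ of dimension $\ge 3$ --- by Hochster's local-cohomology techniques; nothing beyond Section \ref{dietzandsing} is needed on the Dietz-closure side.
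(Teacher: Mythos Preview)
Your proposal is correct; you even name the paper's own route as your alternative. The paper's proof is precisely your second option: invoke Theorem~\ref{regularimpliestrivial} to see that any Dietz closure on a regular local ring is trivial, then cite \cite[Corollary 7.24]{solidclosure} and \cite{solidclosurenontrivialcomputation} for the fact that solid closure is nontrivial on regular local rings containing $\Q$ of dimension $\ge 3$; contradiction. The paper does not pass through Theorem~\ref{notdietz} at all.

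Your preferred route via Theorem~\ref{notdietz} also works, because the nontriviality result in the literature is in fact witnessed by the specific containment $(x_1x_2x_3)^2 \in (x_1^3,x_2^3,x_3^3)^{\bigstar}$ (Roberts' computation), exactly the shape Theorem~\ref{notdietz} demands. So both routes consume the same external input; the paper's route is slightly cleaner in that it only needs the abstract statement ``solid closure is nontrivial here'' rather than the particular monomial form, whereas your Route~1 genuinely requires that form. Your remark that ``either route needs the same input'' is therefore almost but not quite right: Route~2 would be satisfied by \emph{any} witness of nontriviality, while Route~1 needs this one.
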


\begin{proof}
By Theorem \ref{regularimpliestrivial}, Dietz closures are trivial on regular rings. By \cite[Corollary 7.24]{solidclosure} and \cite{solidclosurenontrivialcomputation}, if $R$ is a regular local ring containing the rationals with dimension at least 3, then solid closure is not trivial on $R$. Hence solid closure is not a Dietz closure on these rings. 
\end{proof}

\section{Full Extended Plus Closure}

We do not know whether Heitmann's mixed characteristic plus closure, full extended plus closure, and full rank one closure \cite{heitmann} are Dietz closures, even in dimension 3. To discuss this question, we first extend the definition of full extended plus closure (epf) to finitely generated modules. The other definitions can be extended similarly.

\begin{defn}
Let $R$ be a mixed characteristic local domain, whose residue field has characteristic $p$. Let $N \subseteq M$ be finitely generated modules over $R$. We define the \textit{full extended plus closure} of $N$ in $M$ by $u \in M$ is in $N_M^{\epf}$ if there is some $c \ne 0 \in R$ such that for all $n \in \Z_+$, 
\[c^{1/n} \otimes u \in \im(R^+ \otimes N + R^+ \otimes p^nM \to R^+ \otimes M).\]
\end{defn}

\begin{prop}
For $R$ a local domain of mixed characteristic $p$, full extended plus closure is a closure operation that satisfies the \axioma, the \axiomb, and the \axiomc, and $0^\epf_R=0$.
\end{prop}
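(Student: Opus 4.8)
The strategy is to verify each of the four Dietz axioms (plus the triviality of $0^{\epf}_R$) directly from the definition of $\epf$, using the faithful flatness-type properties of $R^+$ over $R$ and the fact that tensoring with $R^+$ commutes with finite direct sums and is right exact. First I would establish the closure-operation axioms (extension, idempotence, order-preservation), since a careful treatment of idempotence requires knowing how the defining condition behaves under composition; here the key point is that if $c^{1/n}\otimes u$ lies in the image of $R^+\otimes(N_M^\epf+p^nM)$ and each generator $u_i$ of $N_M^\epf$ satisfies the analogous condition with some $c_i$, then taking $c$ times the product of the $c_i$ (and using $(c\prod c_i)^{1/n}$) and re-indexing the bound on the powers of $p$ gives a single witnessing element for $u$.

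Next I would handle the \axioma\ and the \axiomb\ exactly as in Lemma~\ref{moduleclosurelemma}: given $f\colon M\to W$, apply $\id_{R^+}\otimes f$ to the containment $c^{1/n}\otimes u\in\im(R^+\otimes N+R^+\otimes p^nM\to R^+\otimes M)$ to get $c^{1/n}\otimes f(u)\in\im(R^+\otimes f(N)+R^+\otimes p^nW\to R^+\otimes W)$, with the same $c$; for semi-residuality, if $N$ is $\epf$-closed and $\bar u\in 0_{M/N}^\epf$, use right-exactness of $R^+\otimes-$ to identify $R^+\otimes(M/N)$ with $(R^+\otimes M)/(R^+\otimes N)$, pull the containment back to $M$, and conclude $u\in N_M^\epf=N$, hence $\bar u=0$. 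The statement $0^\epf_R=0$ follows since $R\hookrightarrow R^+$ is injective (indeed faithfully flat in the relevant sense): if $u\in R$ and $c^{1/n}u\in p^nR^+$ for all $n$, then $c^{1/n}u\in p^nR^+\cap R$; one argues that the intersection of the $p^nR^+$ with $R$, after clearing the bounded denominators from $c$, is zero because $\bigcap_n p^nR^+=0$ in the $p$-adic sense on $R^+$ (or by a valuation-theoretic argument, since $R^+$ carries valuations extending the $p$-adic one on $R$).

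For the \axiomc, i.e.\ $m$ is $\epf$-closed in $R$, the plan is to use that $R^+$ is a big \CM\ algebra-like object in the sense that $mR^+\ne R^+$ — more precisely, if $u\in m_R^\epf$ then $c^{1/n}\otimes u\in mR^+ + p^nR^+$ for all $n$; since $p\in m$, this gives $c^{1/n}u\in mR^+$ for all $n$, and one deduces $u\in m$ by applying a valuation on $R^+$ nonnegative on $R$ and positive on $m$, together with the fact that $c^{1/n}$ has valuation tending to $0$, forcing $\mathrm{val}(u)>0$, i.e.\ $u\in m$. This is essentially Heitmann's argument and I would cite it; the point is that $R^+$ does not ``swallow'' the maximal ideal. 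I expect the \axiomc\ and the precise handling of the $c^{1/n}$-denominators in idempotence to be the most delicate parts, because one must control infinitely many conditions (one for each $n$) simultaneously while only a single $c$ is allowed, and this is exactly where the valuation-theoretic input — the existence of valuations on $R^+$ extending the $p$-adic valuation, so that $\mathrm{val}(c^{1/n})\to 0$ — does the real work. I would expect to invoke Heitmann's foundational results on $\epf$ (the mixed-characteristic analogue of the properties tight closure enjoys) rather than reprove them from scratch.
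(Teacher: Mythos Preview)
Your proposal is correct and follows essentially the same route as the paper: the product-of-witnesses trick for idempotence, applying $\id_{R^+}\otimes f$ for functoriality, right-exactness for semi-residuality, and the valuation argument (extending the $m$-adic valuation to a $\Q$-valued valuation on $R^+$ and using $\ord(c^{1/n})=\ord(c)/n\to 0$) for both the \axiomc\ and $0^\epf_R=0$. The only cosmetic difference is that the paper carries out the valuation computations explicitly rather than citing Heitmann, and your passing reference to ``faithful flatness-type properties'' of $R^+$ is loose phrasing (you never actually use flatness, only right-exactness and the injection $R\hookrightarrow R^+$).
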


\begin{proof}
It is easy to prove the extension and order-preservation properties.
To see that epf is idempotent, making it a closure operation, let $u \in (N^\epf_M)^\epf_M$. Then there is some $c \ne 0$ in $R$ such that 
\[c^{1/n} \otimes u \in \im(R^+ \otimes N^\epf_M + R^+ \otimes p^nM \to R^+ \otimes M)\]
 for all $n$, say
\[c^{1/n} \otimes u = \sum_i r_i \otimes y_i+\sum_j s_j \otimes p^nm_j,\]
with $r_i,s_j \in R^+$, $y_i \in N^\epf_M$, and $m_j \in M$. For each $i$, there is some nonzero $d_i \in R$ such that 
\[d_i^{1/n} \otimes y_i \in \im(R^+ \otimes N+R^+ \otimes p^nM \to R^+ \otimes M).\]
Then 
\[
c^{1/n}\cdot \Pi_i d_i^{1/n} \otimes u =\Pi_i d_i^{1/n}\left(\sum_i r_i \otimes y_i+\sum_j s_j \otimes p^nm_j \right) 
\in \im(R^+ \otimes N + R^+ \otimes p^nM \to R^+ \otimes M). 
\]
Since $c \cdot \Pi_i d_i$ is a nonzero element of $R$, this proves that $u \in N^\epf_M$.

For the \axioma, let $f:M \to W$ be an $R$-module homomorphism and $N \subseteq M$. Let $u \in N^\epf_M$. Then there is some nonzero $c \in R$ such that 
\[c^{1/n} \otimes u \in \im(R^+ \otimes N+R^+ \otimes p^nM \to R^+ \otimes M)\]
for every $n>0$.
Apply $f$. This tells us that 
\[c^{1/n} \otimes f(u) \in \im(R^+ \otimes f(N)+R^+ \otimes p^nW \to R^+ \otimes W)\]
for every $n>0$, which implies that $f(u) \in f(N)^\epf_W$.

Next, suppose that $N^\epf_M=N$. We will show that $0^\epf_{M/N}=0$. Let $\bar{u} \in 0^\epf_{M/N}$, where $u \in M$. Then there is some nonzero $c \in R$ with
\[ c^{1/n} \otimes \bar{u} \in \im(R^+ \otimes p^n(M/N) \to R^+ \otimes M).\]
But $R^+ \otimes p^n(M/N)$ is isomorphic to $p^n(R^+ \otimes M)/(R^+ \otimes N)$, which tells us that
\[c^{1/n} \otimes u \in \im(R^+ \otimes p^nM+R^+ \otimes N \to R^+ \otimes M).\]
This implies that $u \in N^\epf_M=N$, so $\bar{u}=0$ in $M/N$.

To see that $m^\epf_R=m$, let $u \in m_R^\epf$. Then
\[c^{1/n}u \in (m,p^n)R^+\]
for some nonzero $c \in R$ (using the ideal version of the definition of epf) and for all $n$. Since $p^n \in m$, $c^{1/n}u \in mR^+$ for all $n$. If $u \not\in m$, then $c^{1/n} \in mR^+$ for all $n$. But we can extend the $m$-adic valuation on $R$ to a $\Q$-valued valuation on $R^+$. The order of $c^{1/n}$ will be $\frac{1}{n}\ord(c)$. So this is impossible.

Now let $u \in 0^\epf_R$. Then $c^{1/n}u \in p^nR^+$ for some $c \ne 0$ in $R$ and for all $n$. Let ord denote a $\Q$-valued valuation on $R^+$ that extends the $m$-adic valuation on $R$. Let $s=\ord(c)$ and $t=\ord(p)$. Then we must have $s/n+\ord(u) \ge nt$ for all $n$. This implies that $u=0$.
\end{proof}

A similar argument works for mixed characteristic plus closure and for full rank one closure.

If at least one of these closures is a Dietz closure in dimension 3, this would tie the results of \cite{heitmann, dim3bigcmalgebras} in to the results of this paper. If they are not Dietz closures in dimension 3, this would imply that the Dietz axioms are stronger than they need to be--there could be a weaker set of axioms that would be sufficient for the proof of the Direct Summand Conjecture in mixed characteristic rings.

\section{Connections between Dietz closures and other closure operations}
\label{otherclosureops}

We show that Dietz closures are contained in (liftable) integral closure. This is proved for ideals in \cite{dietzthesis} with the added assumption that the closures are persistent for change of rings, but we do not need this assumption here.

\begin{thm}
\label{solidsmall}
Let $R$ be a domain and ${\cl}={\cl}_M$ where $M$ is a solid module over $R$. Then $I^{\cl} \subseteq \bar{I}$ for every ideal $I$ of $R$.
\end{thm}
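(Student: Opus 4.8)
The plan is to reduce the implication $u\in I^{\cl_M}\ \Rightarrow\ u\in\bar I$ to the classical determinant-trick criterion for integral dependence on an ideal, using solidity as the device that turns the (possibly huge) module $M$ into an ordinary ideal.

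First I would unwind the definition of the module closure in this special case. For an ideal $I\subseteq R$ and $u\in R$, the condition $u\in I^{\cl_M}$ says that for every $s\in M$ the element $s\otimes u$ lies in the image of $M\otimes_R I\to M\otimes_R R$. Identifying $M\otimes_R R$ with $M$, the image of $M\otimes_R I\to M$ is exactly $IM$ and $s\otimes u$ corresponds to $su$, so the hypothesis is simply that $uM\subseteq IM$.

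Next I would use solidity to pass from $M$ to an honest ideal. Since $M$ is solid there is a nonzero $R$-linear map $\phi\colon M\to R$; put $J=\phi(M)$, a nonzero ideal of $R$. Applying $\phi$ to the containment $uM\subseteq IM$ and using $R$-linearity gives $uJ=\phi(uM)\subseteq\phi(IM)=IJ$. Now $J$ is finitely generated (as $R$ is Noetherian) and faithful (a nonzero ideal of a domain has zero annihilator), so the relations $u j_\ell=\sum_m a_{\ell m}j_m$ with $a_{\ell m}\in I$ on a finite generating set $j_1,\dots,j_n$ of $J$ feed into the determinant trick: $\det\!\big(u\delta_{\ell m}-a_{\ell m}\big)$ annihilates $J$, hence is $0$, which unpacks to a monic relation $u^n+c_1u^{n-1}+\cdots+c_n=0$ with $c_k\in I^k$. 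Thus $u\in\bar I$, as required.

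The only place where care is needed is precisely this passage to a finitely generated faithful module: finiteness of $J$ is where the (standing) Noetherian hypothesis on $R$ enters, while faithfulness is automatic in a domain. One could instead try to argue from the valuative criterion $\bar I=\bigcap_V(IV\cap R)$ over valuation overrings $V$ of $\Frac(R)$, noting that $\phi\otimes_R V$ stays nonzero so $M\otimes_R V$ remains a solid $V$-module; but reducing to a finitely generated ideal via $\phi$ is both shorter and avoids subtleties with non-discrete valuations (where the containment $uN\subseteq\mathfrak aN$ for a solid module $N$ need not force $u\in\mathfrak a$). So apart from this finiteness point I expect nothing but routine bookkeeping with images of tensor products.
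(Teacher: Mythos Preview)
Your argument is correct and essentially identical to the paper's: both use a nonzero $\phi\colon M\to R$ from solidity, push the containment $uM\subseteq IM$ (equivalently $I^{\cl}M=IM$) forward to the nonzero ideal $\mathfrak a=\phi(M)$, and then invoke the determinant trick. The only cosmetic difference is that the paper packages the last step as a citation to the standard lemma ``$I\mathfrak a=J\mathfrak a$ for a finitely generated torsion-free $\mathfrak a$ over a domain implies $J\subseteq\bar I$'' rather than writing out the Cayley--Hamilton argument, and your flag that finite generation of $\mathfrak a$ requires the ambient Noetherian hypothesis is exactly the point the paper is tacitly using.
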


\begin{proof}
Since $M$ is solid, there is some nonzero map $f:M \to R$, with image $\frak{a}$, a nonzero ideal of $R$. Suppose that $I \subseteq J \subseteq I^{\cl}$. Then $JM=IM$. Applying $f$, we get $J\frak{a}=I\frak{a}$. Since $R$ is a domain, $\frak{a}$ is a \fg, torsion-free $R$-module. By the lemma below, $J \subseteq \bar{I}$.
\end{proof}

\begin{citedlemma}[\cite{integralclosure}]
Suppose that $I \subseteq J$ are ideals of a domain $R$ such that $IM=JM$ for some \fg, torsion-free $R$-module $M$. Then $J \subseteq \bar{I}$.
\end{citedlemma}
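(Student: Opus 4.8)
The plan is to run the classical determinant trick, i.e.\ a Cayley--Hamilton argument. Fix an arbitrary $x \in J$; it suffices to produce an equation of integral dependence $x^n + c_1 x^{n-1} + \cdots + c_n = 0$ with $c_k \in I^k$ for all $k$, since that is precisely what $x \in \bar{I}$ means. We may assume $M \ne 0$; this is the only case needed for Theorem~\ref{solidsmall}, where $M$ is a nonzero ideal.

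First I would exploit the hypothesis $IM = JM$. Since $x \in J$, we have $xM \subseteq JM = IM$, so, fixing a finite generating set $m_1,\ldots,m_n$ of $M$, we may write $x m_i = \sum_{j=1}^{n} a_{ij} m_j$ for suitable elements $a_{ij} \in I$. Let $A = (a_{ij})$ and let $E_n$ be the $n \times n$ identity matrix. Then $(xE_n - A)$ annihilates the column vector $(m_1,\ldots,m_n)^{\tr}$. Multiplying on the left by the adjugate $\mathrm{adj}(xE_n - A)$ and using the identity $\mathrm{adj}(B)\,B = \det(B)\,E_n$, we conclude that $\det(xE_n - A)\, m_i = 0$ for every $i$, so the element $\det(xE_n - A) \in R$ annihilates all of $M$.

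Now the torsion-freeness of $M$ does the work: an element of $R$ annihilating the nonzero torsion-free module $M$ must be $0$, so $\det(xE_n - A) = 0$ in $R$. Expanding this determinant yields a monic degree-$n$ polynomial relation in $x$, and since every entry of $A$ lies in $I$, the coefficient of $x^{n-k}$ is a sum of $k \times k$ minors of $A$ and hence lies in $I^{k}$; this is exactly an equation of integral dependence of $x$ over $I$, so $x \in \bar{I}$, and as $x \in J$ was arbitrary, $J \subseteq \bar{I}$. The argument is entirely routine; the one place where a hypothesis beyond finite generation is essential is the inference from ``$\det(xE_n - A) \in \Ann_R M$'' to ``$\det(xE_n - A) = 0$'', which requires $M$ to be torsion-free and nonzero — without it one only learns that this determinant annihilates $M$, which is far weaker and insufficient.
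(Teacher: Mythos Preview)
Your argument is correct and is exactly the standard determinant-trick (Cayley--Hamilton) proof of this fact. Note, however, that the paper does not actually supply a proof of this lemma: it is stated as a \texttt{citedlemma} with a reference to \cite{integralclosure}, so there is no ``paper's own proof'' to compare against. Your write-up is the expected argument from the cited source, and your explicit remark that one must take $M \ne 0$ (and that this is guaranteed in the application to Theorem~\ref{solidsmall}, where $M$ is the nonzero ideal $\mathfrak{a}$) is a useful clarification, since the lemma as stated is vacuously false for $M = 0$.
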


\begin{corollary}
Let $R$ be a complete local domain and $B$ a big \CM\ module over $R$. Then $I^{{\cl}_B} \subseteq \bar{I}$ for all ideals $I$ of $R$.
\end{corollary}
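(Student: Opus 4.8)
The plan is to deduce the corollary from Theorem~\ref{solidsmall} by checking that a big \CM\ module over a \cld\ is a \emph{solid} module in the sense of Hochster, i.e.\ $\Hom_R(B,R)\neq 0$. Note that Theorem~\ref{solidsmall} is stated for an arbitrary solid module $M$ over a domain and does not require $M$ to be \fg, so once solidity of $B$ is in hand, the conclusion $I^{{\cl}_B}\subseteq\bar I$ for all ideals $I$ is immediate.

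To show $B$ is solid, I would invoke the characterization from \cite{solidclosure}: over a \cld\ $R$ of dimension $d$, a module $M$ is solid if and only if the top local cohomology $H^d_m(M)$ is nonzero. It therefore suffices to verify $H^d_m(B)\neq 0$. Fix a \sop\ $x_1,\dots,x_d$ for $R$; then for every $t\ge 1$ the powers $x_1^t,\dots,x_d^t$ again form a \sop, hence a regular sequence on the big \CM\ module $B$, and $(x_1^t,\dots,x_d^t)B\subseteq mB\subsetneq B$, so $B/(x_1^t,\dots,x_d^t)B\neq 0$. By the standard fact that for a regular sequence the transition maps $B/(x_1^t,\dots,x_d^t)B\xrightarrow{\,x_1\cdots x_d\,}B/(x_1^{t+1},\dots,x_d^{t+1})B$ are injective, we obtain
\[
H^d_m(B)=\varinjlim_t B/(x_1^t,\dots,x_d^t)B\neq 0,
\]
so $B$ is solid, and Theorem~\ref{solidsmall} yields the result. (Equivalently, one may quote directly the statement in \cite{solidclosure} that big \CM\ modules and algebras over \cld s are solid, bypassing the local cohomology computation altogether.)

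The only input here that is not purely formal is the cited equivalence between solidity and nonvanishing of top local cohomology over a \cld; everything else — the nonvanishing of $H^d_m(B)$ for a big \CM\ module, and the reduction to Theorem~\ref{solidsmall} via the nonzero map $B\to R$ — is routine. So I expect no genuine obstacle beyond correctly invoking the relevant results of \cite{solidclosure}.
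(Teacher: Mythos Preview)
Your proposal is correct and follows essentially the same approach as the paper: show that $B$ is solid over the \cld\ $R$ and then invoke Theorem~\ref{solidsmall}. The paper simply cites \cite[Proposition~10.5]{solidclosure} for the solidity of $B$, whereas you unpack that result via the local cohomology criterion; your parenthetical remark about quoting \cite{solidclosure} directly is exactly what the paper does.
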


\begin{proof}
By \cite[Proposition 10.5]{solidclosure}, $B$ is a solid module over $R$. Hence by Theorem \ref{solidsmall}, $I^{{\cl}_B} \subseteq \bar{I}$ for every ideal $I$ of $R$.
\end{proof}

There are several ways to extend integral closure to modules. Here we use liftable integral closure, denoted $\vdash$, as defined by Epstein and Ulrich.

\begin{citeddefn}[{\cite{liftableintegralclosure}}]
Let $G$ be a submodule of a \fg\ free $R$-module $R^s$, let $S$ be the symmetric algebra over $R$ defined by $R^s$, and let $T$ be the subring of $S$ induced by the inclusion $G \subseteq R^s$. Observe that $S$ is $\N$-graded and generated in degree 1 over $R$, and that $T$ is an $\N$-graded subring of $S$, also generated in degree 1 over $R$. We define the \textit{integral closure} $G^-_{R^s}$ of $G$ in $R^s$ to be the degree 1 part of the integral closure of the subring $T$ of $S$.

Now let $N \subseteq M$ be \fg\ $R$-modules. Take a free module $R^s$ and a surjection $\pi:R^s \to M$, and let $G=\pi^{-1}(N)$. We define the \textit{liftable integral closure} of $N$ in $M$ by 
\[N_M^\vdash=\pi(G_{R^s}^-).\]
\end{citeddefn}

\begin{prop}
\label{solidsmallformodules}
Let $R$ be a domain and ${\cl}={\cl}_M$ where $M$ is a solid $R$-module. Then for all \fg\ free modules $F$ over $R$ and all submodules $G$ of $F$, $G^{\cl}_F \subseteq G^{\vdash}_F$. 
\end{prop}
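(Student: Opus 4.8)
The plan is to imitate the proof of Theorem~\ref{solidsmall}, replacing the ideal-theoretic determinant trick of \cite{integralclosure} by a module-theoretic version carried out inside a symmetric algebra. In fact the argument shows that if $\mathfrak a u \subseteq \mathfrak a G$ for a nonzero finitely generated ideal $\mathfrak a$, then $u \in G^-_F$, which is the submodule analogue of the cited ideal lemma.

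First I would fix a finitely generated free module $F=R^s$, a submodule $G\subseteq F$ with generators $g_1,\dots,g_k$, where $g_i=(g_{i1},\dots,g_{is})$, and an element $u=(u_1,\dots,u_s)\in G^{\cl}_F$. By the definition of the module closure ${\cl}={\cl}_M$ (Definition~\ref{moduleclosure}), together with $M\otimes_R R^s\cong M^s$, the hypothesis unwinds to the statement that for every $m\in M$ there are $m_1,\dots,m_k\in M$ with $u_j m=\sum_{i=1}^k g_{ij}m_i$ for all $j=1,\dots,s$. Since $M$ is solid, pick a nonzero $R$-linear map $f\colon M\to R$ and set $\mathfrak a=f(M)$, a nonzero ideal of $R$ and hence finitely generated (exactly as in the proof of Theorem~\ref{solidsmall}), say $\mathfrak a=(a_1,\dots,a_n)$. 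Applying $f$ to the relations above, with $a=f(m)$ and $b_i=f(m_i)$, yields $a\,u_j=\sum_i b_i g_{ij}$ for all $j$, i.e. $a\cdot u\in \mathfrak a G$ inside $F$. Letting $m$ range over $M$ makes $a=f(m)$ range over all of $\mathfrak a$, so $\mathfrak a u\subseteq \mathfrak a G=\sum_{j=1}^n a_j G$; in particular, for each $\ell\in\{1,\dots,n\}$ we may write $a_\ell u=\sum_{j=1}^n a_j h_{\ell j}$ with $h_{\ell j}\in G$.

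Next I would pass to $S=\symm_R(R^s)\cong R[y_1,\dots,y_s]$, which is a domain, and let $T\subseteq S$ be the $\N$-graded subring generated in degree $1$ by $G$, so that $G^-_F$ is the degree-$1$ component of the integral closure of $T$ in $S$; since $F$ is free we may use the identity presentation $\mathrm{id}_F\colon F\to F$, so that $G^{\vdash}_F=G^-_F$. Viewing $u$ and the $h_{\ell j}$ as elements of $S_1$, with the $h_{\ell j}$ lying in $T_1$, the relations $a_\ell u=\sum_j a_j h_{\ell j}$ say precisely that the $n\times n$ matrix $A=u\,\mathrm{Id}_n-(h_{\ell j})$ over $S$ annihilates the column vector $(a_1,\dots,a_n)^{\tr}$. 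Multiplying by the adjugate of $A$ gives $\det(A)\,a_j=0$ for every $j$; as $S$ is a domain and some $a_j\neq 0$, this forces $\det(A)=0$ in $S$. Expanding the characteristic polynomial gives $u^n-c_1u^{n-1}+\dots+(-1)^nc_n=0$, where $c_r$ is the sum of the $r\times r$ principal minors of $(h_{\ell j})$ and hence is a homogeneous element of $T_r$. Thus $u$ satisfies a monic homogeneous equation of integral dependence over $T$, so $u$ lies in the degree-$1$ part of the integral closure of $T$ in $S$, i.e. $u\in G^-_F=G^{\vdash}_F$, as desired.

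The two unwindings are routine. The step I expect to need the most care is the last one: making sure the matrix relation genuinely lives over $S$ (so that $S$ being a domain can be invoked) and that the coefficients $c_r$ of the resulting monic polynomial land in the graded subring $T$, rather than merely in $S$, so that one really obtains an integral equation over $T$ and hence an element of $G^-_F$. A secondary point, already present in Theorem~\ref{solidsmall}, is the finite generation of $\mathfrak a$, which is what permits the determinant trick to be run with a finite square matrix.
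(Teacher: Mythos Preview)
Your argument is correct and essentially parallel to the paper's, but the packaging differs. The paper base-changes everything to $S=\symm_R(F)$: it sets $I=GS$ and $\widetilde{M}=S\otimes_R M$, observes that $u\in G^{{\cl}_M}_F$ forces $u\in I^{{\cl}_{\widetilde{M}}}_S$, notes that $\widetilde{M}$ is solid over $S$ (tensor a nonzero $f\colon M\to R$ with $S$), and then invokes Theorem~\ref{solidsmall} over $S$ to get $u\in\bar I$ in degree~1, i.e.\ $u\in G^-_F$. In other words, the paper reduces to the already-proved ideal case and lets the cited determinant-trick lemma do the work.

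You instead apply solidity over $R$ first, obtaining the module-level relation $\mathfrak a\,u\subseteq\mathfrak a\,G$ in $F$, and only then pass to $S$ to run the adjugate/determinant trick by hand, producing an explicit monic homogeneous equation for $u$ over $T$. This is a genuine, if modest, difference: your route is self-contained (it does not cite Theorem~\ref{solidsmall} or \cite{integralclosure}) and transparently exhibits the submodule analogue of the $IM=JM\Rightarrow J\subseteq\bar I$ lemma, at the cost of repeating the determinant computation; the paper's route is cleaner bookkeeping because it reuses the ideal result. Your two flagged worries are fine: the entries $h_{\ell j}$ lie in $G\subseteq T_1$, so the principal minors $c_r$ land in $T_r$ as needed, and the matrix relation lives over the domain $S$ with the nonzero scalars $a_j\in R\subseteq S$. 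The finite generation of $\mathfrak a$ is exactly the same (implicit Noetherian) assumption already used in Theorem~\ref{solidsmall}.
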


\begin{proof}
Let $F$ be a free module of rank $h$ over $R$ and $G \subseteq F$. Let $S=\symm(F) \cong R[x_1,\ldots,x_h]$, $I$ the ideal generated by the image of $G$ in $S$, and $\widetilde{M}=S \otimes_R M$. We will show that $G^{\cl}_F$ is contained in the degree one piece of $I^{{\cl}_{\widetilde{M}}}_S$.

Suppose that $u \in G^{\cl}_F$. Then for every $m \in M$, $m \otimes u \in \im(M \otimes G \to M^h)$. This implies that $m \otimes u \otimes 1 \in \im(M \otimes_R G \otimes_R S \to M^h \otimes_R S).$ By associativity and commutativity of tensor, $M \otimes_R G \otimes_R S \cong \widetilde{M} \otimes_S I \cong I\widetilde{M}$. This isomorphism takes $m \otimes u \otimes 1 \mapsto u(1 \otimes m)$. Then $u(s \otimes m) \in I\widetilde{M}$ for all $s \in S$, $m \in M$, which implies that $u \in I^{{\cl}_{\widetilde{M}}}_S$. Since $u \in G$, its image in $S$ is of degree 1.

Since $S$ is a domain and $\widetilde{M}$ is solid over $S$, $I^{{\cl}_{\widetilde{M}}} \subseteq \bar{I}$ for all ideals $I$ of $S$. This implies that $u$ is contained in the degree 1 piece of $\bar{I}$, and hence $u \in G^{\vdash}_F$.
\end{proof}

\begin{thm}
Let $R$ be a domain and ${\cl}={\cl}_M$ where $M$ is a solid module over $R$. Then ${\cl}$ is contained in liftable integral closure. In particular, if $R$ is a complete local domain, all big \CM\ modules closures on $R$ are contained in liftable integral closure. This implies that all Dietz closures on $R$ are contained in liftable integral closure.
\end{thm}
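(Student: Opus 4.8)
The plan is to promote Proposition~\ref{solidsmallformodules}, which handles submodules of finitely generated free modules, to arbitrary finitely generated pairs by lifting through a free presentation, and then to read off the two stated consequences. First I would record that $\cl = \cl_M$ satisfies the \axioma\ and the \axiomb\ (Lemma~\ref{moduleclosurelemma}), so that Lemma~\ref{lemma1.2}(a) is available: for finitely generated $R$-modules $N \subseteq P$ and $u \in P$, we have $u \in N^{\cl}_P$ if and only if $\bar u \in 0^{\cl}_{P/N}$. I would also note that any $R$-module isomorphism $\phi \colon A \to A'$ carries $0^{\cl}_A$ onto $0^{\cl}_{A'}$: the inclusion $\phi(0^{\cl}_A) \subseteq 0^{\cl}_{A'}$ is Functoriality applied to $\phi$, and the reverse inclusion is Functoriality applied to $\phi^{-1}$ together with $\phi^{-1}(0) = 0$.

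Next I would fix finitely generated $R$-modules $N \subseteq P$ and an element $u \in N^{\cl}_P$, with goal $u \in N^\vdash_P$. Choose a surjection $\pi \colon R^s \twoheadrightarrow P$, set $G = \pi^{-1}(N)$, and let $\bar\pi \colon R^s/G \to P/N$ be the isomorphism induced by $\pi$. Pick $v \in R^s$ with $\pi(v) = u$, so that $\bar\pi(v + G) = \bar u$. From $u \in N^{\cl}_P$ we get $\bar u \in 0^{\cl}_{P/N}$ by Lemma~\ref{lemma1.2}(a), hence $v + G \in 0^{\cl}_{R^s/G}$ via $\bar\pi^{-1}$ (using the isomorphism remark above), hence $v \in G^{\cl}_{R^s}$ by Lemma~\ref{lemma1.2}(a) once more. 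Proposition~\ref{solidsmallformodules} then yields $v \in G^\vdash_{R^s}$, which equals $G^-_{R^s}$ by the definition of liftable integral closure applied to the identity presentation of $R^s$. Therefore $u = \pi(v) \in \pi(G^-_{R^s}) = N^\vdash_P$, proving $\cl \subseteq {\vdash}$.

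For the remaining assertions: if $R$ is a complete local domain and $B$ is a big \CM\ module, then $B$ is a solid $R$-module by \cite[Proposition~10.5]{solidclosure}, so the case just proved applies with $M = B$ and gives $\cl_B \subseteq {\vdash}$; since every big \CM\ module closure is of this form, all of them are contained in liftable integral closure. Finally, by Theorem~\ref{dietzinbigcm}, any Dietz closure $\cl$ on $R$ satisfies $\cl \subseteq \cl_B$ for some big \CM\ module $B$, whence $\cl \subseteq \cl_B \subseteq {\vdash}$.

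The only delicate point is the reduction to the free case: one must know the isomorphism $\bar\pi$ transports membership in $0^{\cl}$ faithfully — which is exactly why I would isolate the ``isomorphisms preserve $0^{\cl}$'' observation at the outset — and one should make sure the manipulation of $N^\vdash_P$ is consistent with Epstein--Ulrich's presentation-independent definition (in fact the argument only ever uses the single presentation $\pi$ it names, so no genuine independence statement is invoked). Beyond that, the proof is a short chase through the equivalence of Lemma~\ref{lemma1.2}(a), and I do not expect a serious obstacle.
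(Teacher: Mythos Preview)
Your proposal is correct and follows essentially the same approach as the paper: lift to a free presentation via $\pi$, use Lemma~\ref{lemma1.2}(a) to pass from $u\in N^{\cl}_P$ to its lift lying in $G^{\cl}_{R^s}$, apply Proposition~\ref{solidsmallformodules}, and push back down using the definition of $\vdash$. You are simply more explicit than the paper about why the lift lands in $G^{\cl}_{R^s}$ (the isomorphism $R^s/G\cong P/N$ transporting $0^{\cl}$) and about deducing the two stated consequences from \cite[Proposition~10.5]{solidclosure} and Theorem~\ref{dietzinbigcm}, which the paper leaves largely implicit.
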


\begin{proof}
Let $L \subseteq N$ be \fg\ modules over $R$, and let $\pi:F \to N$ be a surjection of a \fg\ free module $F$ onto $M$. Let $K=\pi^{-1}(L)$. Let $u \in L^{\cl}_N$. Then by Lemma \ref{lemma1.2}, any lift $\tilde{u}$ of $u$ to $F$ is contained in $K^{\cl}_F$. By Proposition \ref{solidsmallformodules},  $\tilde{u} \in K^{\vdash}_F$. Hence $u \in L^{\vdash}_N$.
\end{proof}

Recall that a family of closure operations cl on a class of rings and maps between them is persistent  for change of rings if given any $R \to S$ in the class, and $N \subseteq M$ \fg\ $R$-modules,
\[S \otimes_R N_M^{\cl} \subseteq (S \otimes_R N)^{\cl}_{S \otimes_R M}.\]

\begin{prop}
Any persistent family of Dietz closures is contained in regular closure.
\end{prop}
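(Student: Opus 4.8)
The plan is to unwind the definition of regular closure and combine it with Theorem~\ref{regularimpliestrivial} (triviality of Dietz closures on regular local rings) via the persistence hypothesis. Let $\cl$ be a member of the given persistent family of Dietz closures, let $N \subseteq M$ be finitely generated $R$-modules, and suppose $u \in N_M^{\cl}$. To conclude $u \in N_M^{\reg}$ I need to show that $u \in N_M^{{\cl}_S}$ for an arbitrary regular $R$-algebra $S$, so fix such an $S$.

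First I would reduce to the case of a regular \emph{local} ring. The cokernel of $S \otimes_R N \to S \otimes_R M$ is $S \otimes_R (M/N) \cong (S \otimes_R M)/(S \otimes_R N)$, and an element of an $S$-module is zero precisely when it dies after localizing at every prime $\mathfrak{q}$ of $S$; since $(s \otimes \bar u)_{\mathfrak{q}} = (s/1)\,(1 \otimes \bar u)$ in $S_{\mathfrak{q}} \otimes_R (M/N)$, the condition ``$s \otimes u \in \im(S \otimes_R N \to S \otimes_R M)$ for all $s \in S$'' will follow once I know $1 \otimes u \in \im(S_{\mathfrak{q}} \otimes_R N \to S_{\mathfrak{q}} \otimes_R M)$ for every $\mathfrak{q}$, i.e. once $u \in N_M^{{\cl}_{S_{\mathfrak{q}}}}$ for every $\mathfrak{q}$. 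Each $S_{\mathfrak{q}}$ is a regular local ring, hence a regular local domain, so $\cl$ is defined on it and is a Dietz closure there.

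Next I would apply persistence to the map $R \to S_{\mathfrak{q}}$: from $u \in N_M^{\cl}$ it gives
\[1 \otimes u \in \bigl(\im(S_{\mathfrak{q}} \otimes_R N \to S_{\mathfrak{q}} \otimes_R M)\bigr)^{\cl}_{S_{\mathfrak{q}} \otimes_R M}.\]
Because $S_{\mathfrak{q}}$ is a regular local ring and $\cl$ restricts to a Dietz closure on it, Theorem~\ref{regularimpliestrivial} shows $\cl$ is trivial on $S_{\mathfrak{q}}$, so the outer closure can be dropped; this is exactly $u \in N_M^{{\cl}_{S_{\mathfrak{q}}}}$. Running back up through the localization step gives $u \in N_M^{{\cl}_S}$, and since $S$ was arbitrary, $u \in N_M^{\reg}$.

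The argument is essentially bookkeeping on top of Theorem~\ref{regularimpliestrivial}, so I do not expect a serious obstacle; the points needing attention are the local-to-global step (that membership in a submodule of $S \otimes_R M$ is detected on the localizations $S_{\mathfrak{q}}$, using that the relevant cokernel is $S \otimes_R (M/N)$) and the implicit requirement that the persistent family is actually defined on the maps $R \to S_{\mathfrak{q}}$ — that is, that regular local domains and these localization maps belong to the class on which the family lives. If ``regular $R$-algebra'' in the definition of $\reg$ is read as ``regular \emph{local} $R$-algebra'', the localization reduction is unnecessary and the proof is just persistence followed by Theorem~\ref{regularimpliestrivial}.
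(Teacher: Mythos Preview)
Your proposal is correct and follows the same core idea as the paper's proof: apply persistence along $R \to S$ and then invoke Theorem~\ref{regularimpliestrivial} to drop the closure on the regular target. The paper's argument is terser---it is stated only for ideals and does not make the localization reduction explicit---so your version is in fact a more careful elaboration of the same approach rather than a different one.
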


\begin{proof}
Suppose that $u \in I^{\cl}$. Then in any map to a regular ring $S$, $u \in (IS)^{\cl}=IS$ by persistence. So $u \in I^\reg$.
\end{proof}

\section{Further Questions}
\label{furtherquestions}

\subsection{Examples of \CM\ Module Closures}

In the proof of Theorem \ref{trivialimpliesregular}, we showed that if a local domain $(R,m,k)$ is \CM\ of dimension not equal to 1 but is not regular, ${\cl}_{\syz^d(k)}$ is a non-trivial Dietz closure for $R$. We give another class of non-trivial Dietz closures, which can only occur when $R$ is not regular.

\begin{eg}
Let $R=k[[x^2,xy,y^2]]$. Then $M=(x^2,xy)$ is a non-maximal \CM\ module over $R$ (it has height=depth=1). Let $I=(x^4,x^3y,xy^3,y^4)$ and $J=(x^4,x^3y,x^2y^2,xy^3,y^4)$. Then $I \subsetneqq J$, but 
\[I(x^2,xy)=(x^6,x^5y,x^3y^3,x^2y^4,x^5y,x^4y^2,x^2y^4,xy^5)=J(x^2,xy).\]
So $I^{{\cl}_M}=J^{{\cl}_M}$.
\end{eg}

\begin{eg}
Let $R=k[[x,y,u,v]]/(xy-uv)$. Then $M=(x,u)$ is a non-maximal \CM\ module over $R$. Let $I=(y^2,v^2)$ and $J=(yv)$. Then $I \ne J$, but $IM=JM=(xyv,yuv)$, so $I^{{\cl}_M}=J^{{\cl}_M}$.

In addition, if we let $I=(x^2,u^2)$ and $J=(x^2,xu,u^2)$, then $IM=JM=(x^3,x^2u,xu^2,u^3)$, even though $I \ne J$.
\end{eg}

This gives rise to a more general class of examples: suppose that $(x,u)$ is a non-principal ideal that is a \CM\ module (height 1, depth 1), and $xu \not\in (x^2,u^2)$. Then $(x^2,u^2)(x,u)=(x^2,xu,u^2)(x,u)$.

\begin{eg}
Let $R=k[[x,y,z]]/(x^3+y^3+z^3)$, with the characteristic of $k$ not equal to 3. Then $(x,y+z)$ is a height 1 prime of depth 1. Since $x(y+z) \not\in (x,y+z)^2$, we are in the case above.
\end{eg}

All of these examples are Gorenstein rings, so in each case the canonical module (a maximal \CM\ module) is equal to the ring.

\begin{question}
If $R$ is not Gorenstein and has a canonical module $\omega$, then $\omega$ is a \CM\ module for $R$ with no free summand. Hence by the proof of Theorem \ref{trivialimpliesregular}, ${\cl}_\omega$ is a nontrivial closure operation on $R$. How else might we characterize this closure?
\end{question}

\subsection{Largest Big \CM\ Module Closure}
\label{largestbigcm}

We do not know whether there is a largest Dietz closure. If there is one, then by Theorem \ref{dietzinbigcm} it will also be the largest big \CM\ module closure. Hence there is a largest big \CM\ module closure if and only if there is a largest Dietz closure.

\begin{prop}
If Dietz closures on a local domain $R$ form a directed set, then the sum of all Dietz closures is equal to the largest Dietz closure.
\end{prop}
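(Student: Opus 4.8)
The plan is to exhibit the largest Dietz closure as the pointwise union of all of them. Write $\{{\cl}_\lambda\}_{\lambda\in\Lambda}$ for the collection of all Dietz closures on $R$, partially ordered by containment, and assume it is directed. Define a rule ${\cl}^\ast$ by $N_M^{{\cl}^\ast}=\bigcup_{\lambda\in\Lambda}N_M^{{\cl}_\lambda}$ for finitely generated $R$-modules $N\subseteq M$; equivalently $u\in N_M^{{\cl}^\ast}$ iff $u\in N_M^{{\cl}_\lambda}$ for some $\lambda$.

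First I would verify that ${\cl}^\ast$ is a closure operation --- the one step where directedness is genuinely needed. Extension and order-preservation are inherited termwise. To see $N_M^{{\cl}^\ast}$ is a submodule, given $u\in N_M^{{\cl}_{\lambda_1}}$ and $v\in N_M^{{\cl}_{\lambda_2}}$, directedness provides a Dietz closure ${\cl}_\mu$ with ${\cl}_{\lambda_1},{\cl}_{\lambda_2}\subseteq{\cl}_\mu$, so $u,v\in N_M^{{\cl}_\mu}$ and hence $u+v\in N_M^{{\cl}_\mu}\subseteq N_M^{{\cl}^\ast}$; stability under scalars is clear. For idempotence, $N_M^{{\cl}^\ast}$ is finitely generated (submodules of finitely generated $R$-modules are finitely generated), say by $w_1,\ldots,w_r$ with each $w_i\in N_M^{{\cl}_{\mu_i}}$; if $u\in(N_M^{{\cl}^\ast})_M^{{\cl}_\lambda}$, choose by directedness a single ${\cl}_\nu$ dominating ${\cl}_\lambda$ and all the ${\cl}_{\mu_i}$. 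Then $N_M^{{\cl}^\ast}\subseteq N_M^{{\cl}_\nu}\subseteq N_M^{{\cl}^\ast}$, so they agree, and $u\in(N_M^{{\cl}_\nu})_M^{{\cl}_\lambda}\subseteq(N_M^{{\cl}_\nu})_M^{{\cl}_\nu}=N_M^{{\cl}_\nu}\subseteq N_M^{{\cl}^\ast}$.

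Next I would observe that ${\cl}^\ast$ is simultaneously the sum and the largest Dietz closure. By construction ${\cl}_\lambda\subseteq{\cl}^\ast$ for all $\lambda$, and any closure operation dominating every ${\cl}_\lambda$ dominates the union ${\cl}^\ast$ pointwise; so ${\cl}^\ast$ is the smallest closure operation containing every Dietz closure, i.e.\ ${\cl}^\ast=\sum_\lambda{\cl}_\lambda$ in the sense of \cite{epstein}. It then remains only to check the four axioms of Definition \ref{axioms} for ${\cl}^\ast$, and each passes through from the ${\cl}_\lambda$ without further appeal to directedness: Functoriality since $f(N_M^{{\cl}^\ast})=\bigcup_\lambda f(N_M^{{\cl}_\lambda})\subseteq\bigcup_\lambda f(N)_W^{{\cl}_\lambda}=f(N)_W^{{\cl}^\ast}$; Faithfulness since $m_R^{{\cl}^\ast}=\bigcup_\lambda m_R^{{\cl}_\lambda}=m$; Semi-residuality since $N_M^{{\cl}^\ast}=N$ forces each $N_M^{{\cl}_\lambda}=N$, whence $0_{M/N}^{{\cl}_\lambda}=0$ for every $\lambda$ and so $0_{M/N}^{{\cl}^\ast}=0$; and Generalized Colon-Capturing since any $u\in(Rv)_M^{{\cl}^\ast}\cap\ker f$ lies in $(Rv)_M^{{\cl}_\lambda}\cap\ker f\subseteq(Jv)_M^{{\cl}_\lambda}\subseteq(Jv)_M^{{\cl}^\ast}$ for some $\lambda$. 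Hence ${\cl}^\ast$ is a Dietz closure containing every Dietz closure, so it is the largest one, and it equals $\sum_\lambda{\cl}_\lambda$.

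I expect the only real obstacle to be the idempotence of ${\cl}^\ast$: that is exactly where the directedness hypothesis is indispensable, and it also quietly uses that submodules of finitely generated $R$-modules are finitely generated, so that the finitely many generators of $N_M^{{\cl}^\ast}$ can be absorbed into a single ${\cl}_\nu$. Once ${\cl}^\ast$ is known to be a closure operation, the verification of the Dietz axioms and the identification with the sum are purely formal.
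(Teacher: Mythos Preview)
Your proof is correct and follows essentially the same approach as the paper. The paper lets $D$ denote the sum, cites \cite{epstein} for the facts that $D$ is a closure operation and that (over a Noetherian ring) for each fixed $N\subseteq M$ there is a single Dietz closure ${\cl}$ with $N_M^D=N_M^{\cl}$, and then verifies the four axioms exactly as you do; you instead prove those two cited facts directly via the directedness-plus-finite-generation argument, making your version more self-contained but otherwise identical in structure.
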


\begin{proof}
Let D denote the sum of all Dietz closures. To see that it is a Dietz closure (it will be a closure operation, by \cite{epstein}), we use the fact \cite{epstein} that since $R$ is Noetherian, for any particular $N \subseteq M$ \fg\ $R$-modules, there is some Dietz closure cl such that $N_M^{\cl}=N_M^\ld$.

\axioma: Let $f:M \to W$ be a map of $R$-modules, and $N \subseteq M$. Let cl be a Dietz closure such that $N_M^{\cl}=N_M^\ld$. Then $f(N_M^\ld)=f(N_M^{\cl})\subseteq f(N)_W^{\cl} \subseteq f(N)_W^\ld$.

\axiomb: Suppose that $N_M^\ld=N$. Then $N_M^{\cl}=N$ for every Dietz closure cl. Hence $0_{M/N}^{\cl}=0$ for every Dietz closure cl, which implies that $0_{M/N}^\ld=0$.

\axiomc: We must have $m^\ld=m$, 
since $m$ 
is cl-closed for any Dietz closure cl.

\axiomd: With $R$, $v$, and $J$ as in the statement of Axiom 4, let cl be a Dietz closure such that $(Rv)_M^\ld=(Rv)_M^{\cl}$. Then $(Rv)_M^\ld \cap \ker(f)=(Rv)_M^{\cl} \cap \ker(f) \subseteq (Jv)_M^{\cl} \subseteq (Jv)_M^\ld$.
\end{proof}

So to prove that there is a largest Dietz closure, it suffices to show that Dietz closures form a directed set. To do this, it would be enough to show that given 2 Dietz closures ${\cl}$ and ${\cl}^\prime$, we can construct a big \CM\ module $B$ such that ${\cl},{\cl}^\prime \subseteq {\cl}_B$. It is not clear that if we perform a modification that is cl-phantom, then one that is ${\cl}^\prime$-phantom, that $\im(1)$ stays out of the image of $m$, so we do not know of a way to construct such a big \CM\ module.

\section*{Acknowledgments}

My thanks to Mel Hochster for many helpful conversations relating to this work, to Anurag K. Singh for suggesting Theorem \ref{solidclosurechar0}, and to Craig Huneke for asking several relevant questions. I also thank Geoffrey D. Dietz, Linquan Ma, and Felipe Perez for their comments on a draft of the manuscript, and the anonymous referee for many suggestions that improved the exposition of this paper. This work was partially supported by the National Science Foundation [grant numbers DGE 1256260, DMS 1401384]. 

\bibliography{rrgbibfile}{}

\begin{thebibliography}{McD99}

\bibitem[Die05]{dietzthesis}
Geoffrey Dietz.
\newblock {\em Closure Operations in Positive Characteristic and Big \CM\
  Algebras}.
\newblock PhD thesis, University of Michigan, Ann Arbor, 2005.

\bibitem[Die10]{dietz}
Geoffrey~D. Dietz.
\newblock A characterization of closure operators that induce big
  {C}ohen-{M}acaulay modules.
\newblock {\em Proc. Amer. Math. Soc.}, 138(11):3849--3862, 2010.

\bibitem[Die15]{newdietz}
Geoffrey Dietz.
\newblock Axiomatic closure operations, phantom extensions, and solidity.
\newblock preprint, \url{http://arxiv.org/abs/1511.04286}, 2015.

\bibitem[Dut89]{dutta}
S.~P. Dutta.
\newblock Syzygies and homological conjectures.
\newblock In {\em Commutative algebra ({B}erkeley, {CA}, 1987)}, volume~15 of
  {\em Math. Sci. Res. Inst. Publ.}, pages 139--156. Springer, New York, 1989.

\bibitem[Eps12]{epstein}
Neil Epstein.
\newblock A guide to closure operations in commutative algebra.
\newblock In {\em Progress in commutative algebra 2}, pages 1--37. Walter de
  Gruyter, Berlin, 2012.

\bibitem[EU14]{liftableintegralclosure}
Neil Epstein and Bernd Ulrich.
\newblock Liftable integral closure.
\newblock preprint, \url{http://arxiv.org/abs/1309.6966}, 2014.

\bibitem[Hei02]{heitmann}
Raymond~C. Heitmann.
\newblock The direct summand conjecture in dimension three.
\newblock {\em Ann. of Math. (2)}, 156(2):695--712, 2002.

\bibitem[HH90]{tightclosure}
Melvin Hochster and Craig Huneke.
\newblock Tight closure, invariant theory, and the {B}rian\c con-{S}koda
  theorem.
\newblock {\em J. Amer. Math. Soc.}, 3(1):31--116, 1990.

\bibitem[HH92]{faithfullyflat}
Melvin Hochster and Craig Huneke.
\newblock Infinite integral extensions and big {C}ohen-{M}acaulay algebras.
\newblock {\em Ann. of Math. (2)}, 135(1):53--89, 1992.

\bibitem[HH93]{phantomhomology}
Melvin Hochster and Craig Huneke.
\newblock Phantom homology.
\newblock {\em Mem. Amer. Math. Soc.}, 103(490):vi+91, 1993.

\bibitem[HH94a]{smoothbasechange}
Melvin Hochster and Craig Huneke.
\newblock {$F$}-regularity, test elements, and smooth base change.
\newblock {\em Trans. Amer. Math. Soc.}, 346(1):1--62, 1994.

\bibitem[HH94b]{s2ifications}
Melvin Hochster and Craig Huneke.
\newblock Indecomposable canonical modules and connectedness.
\newblock In {\em Commutative algebra: syzygies, multiplicities, and birational
  algebra ({S}outh {H}adley, {MA}, 1992)}, volume 159 of {\em Contemp. Math.},
  pages 197--208. Amer. Math. Soc., Providence, RI, 1994.

\bibitem[HH94c]{hhpaper}
Melvin Hochster and Craig Huneke.
\newblock Tight closure of parameter ideals and splitting in module-finite
  extensions.
\newblock {\em J. Algebraic Geom.}, 3(4):599--670, 1994.

\bibitem[HH95]{bigcmalgapps}
Melvin Hochster and Craig Huneke.
\newblock Applications of the existence of big {C}ohen-{M}acaulay algebras.
\newblock {\em Adv. Math.}, 113(1):45--117, 1995.

\bibitem[Hoc73]{directsummand}
M.~Hochster.
\newblock Contracted ideals from integral extensions of regular rings.
\newblock {\em Nagoya Math. J.}, 51:25--43, 1973.

\bibitem[Hoc75]{modulemods}
Melvin Hochster.
\newblock {\em Topics in the homological theory of modules over commutative
  rings}.
\newblock Published for the Conference Board of the Mathematical Sciences by
  the American Mathematical Society, Providence, R.I., 1975.
\newblock Expository lectures from the CBMS Regional Conference held at the
  University of Nebraska, Lincoln, Neb., June 24--28, 1974, Conference Board of
  the Mathematical Sciences Regional Conference Series in Mathematics, No. 24.

\bibitem[Hoc77]{cyclicpurity}
Melvin Hochster.
\newblock Cyclic purity versus purity in excellent {N}oetherian rings.
\newblock {\em Trans. Amer. Math. Soc.}, 231(2):463--488, 1977.

\bibitem[Hoc83]{canonicalelement}
Melvin Hochster.
\newblock Canonical elements in local cohomology modules and the direct summand
  conjecture.
\newblock {\em J. Algebra}, 84(2):503--553, 1983.

\bibitem[Hoc94]{solidclosure}
Melvin Hochster.
\newblock Solid closure.
\newblock In {\em Commutative algebra: syzygies, multiplicities, and birational
  algebra ({S}outh {H}adley, {MA}, 1992)}, volume 159 of {\em Contemp. Math.},
  pages 103--172. Amer. Math. Soc., Providence, RI, 1994.

\bibitem[Hoc02]{dim3bigcmalgebras}
Melvin Hochster.
\newblock Big {C}ohen-{M}acaulay algebras in dimension three via {H}eitmann's
  theorem.
\newblock {\em J. Algebra}, 254(2):395--408, 2002.

\bibitem[HS06]{integralclosure}
Craig Huneke and Irena Swanson.
\newblock {\em Integral closure of ideals, rings, and modules}, volume 336 of
  {\em London Mathematical Society Lecture Note Series}.
\newblock Cambridge University Press, Cambridge, 2006.

\bibitem[McD99]{regularclosure}
Moira~A. McDermott.
\newblock Regular closure.
\newblock {\em Proc. Amer. Math. Soc.}, 127(7):1975--1979, 1999.

\bibitem[Rob94]{solidclosurenontrivialcomputation}
Paul~C. Roberts.
\newblock A computation of local cohomology.
\newblock In {\em Commutative algebra: syzygies, multiplicities, and birational
  algebra ({S}outh {H}adley, {MA}, 1992)}, volume 159 of {\em Contemp. Math.},
  pages 351--356. Amer. Math. Soc., Providence, RI, 1994.

\end{thebibliography}
\bibliographystyle{alpha}
\end{document}